\renewcommand{\marginpar}
\definecolor{NoteColor}{rgb}{1,0,0}
\newcommand{\note}[1]{\textcolor{NoteColor}{#1}}
\newtheorem{theorem}{\rm\bf Theorem}[section]
\newtheorem{main}{\rm\bf Theorem}
\newtheorem{proposition}[theorem]{\rm\bf Proposition}
\newtheorem{lemma}[theorem]{\rm\bf Lemma}
\newtheorem{corollary}[theorem]{\rm\bf Corollary}
\newtheorem*{theorem 1}{\rm\bf Proposition 1}
\newtheorem*{theorem 2}{\rm\bf Proposition 2}
\theoremstyle{definition}
\theoremstyle{remark}
\newtheorem{remark}[theorem]{\rm\bf Remark}
\newtheorem{questions}[theorem]{\rm\bf Questions}
\def\interieur#1{\mathord{\mathop{\kern 0pt #1}\limits^\circ}}
\title[Horofunction compactification]{The horofunction compactification of Teichm\"uller spaces of surfaces with boundary}\thanks{This
work was started while the authors were visiting Mathematisches Forschungsinstitut Oberwolfach from July 08 to July 21, 2012.
We appreciate the support of  MFO.  L. Liu is partially supported by NSFC No: 11271378; W. Su is partially supported by NSFC No: 11201078.
 All the authors are partially supported by the French ANR grant FINSLER. The author would like to thank the referee for his (or her) corrections and useful comments.}
\author{D. Alessandrini}
\address{Daniele Alessandrini, Universit\"at Heidelberg, Mathematisches Institut, INF 205,
69120 Heidelberg}
\email{daniele.alessandrini@gmail.com}
\author{L. Liu}
\address{Lixin Liu, Department of Mathematics, Sun Yat-Sen University, 510275, Guangzhou, P. R. China}
\email{mcsllx@mail.sysu.edu.cn}
\author{A. Papadopoulos}
\address{Athanase Papadopoulos,  Institut de Recherche Math\'ematique Avanc\'ee, Universit{\'e} de Strasbourg and CNRS,
7 rue Ren\'e Descartes,
 67084 Strasbourg Cedex, France and CUNY, Hunter College, Department of Mathematics  and Statistics,  695, park Ave. NY 10065, USA} \email{athanase.papadopoulos@math.unistra.fr}
\date{\today}
\author{W. Su}
\address{Weixu Su, School of Mathematics and Shanghai Mathematics Center, Fudan University, 200433, Shanghai, China.}
\email{suwx@fudan.edu.cn}
\date{\today}
\begin{document}

\begin{abstract}
The arc metric is an asymmetric metric on the Teichm\"uller space
$\mathcal{T}(S)$ of a surface $S$ with nonempty boundary.
It is the analogue of Thurston's metric on the Teichm\"uller space of a surface without boundary.
In this paper we study the relation between Thurston's compactification and the horofunction compactification
of $\mathcal{T}(S)$ endowed with the arc metric.
We prove that there is a natural homeomorphism between the
two  compactifications.
This generalizes a result of Walsh \cite{Walsh} that concerns Thurston's
 metric.
\medskip

\noindent The final version of this paper will appear in \emph{Topology and its Applications}.
\end{abstract}

\maketitle


\noindent AMS Mathematics Subject Classification:   32G15 ; 30F30 ; 30F60.
\medskip

\noindent Keywords: Arc metric; Teichm\"uller space; Thurston's asymmetric metric; Thurston's compactification; horofunction; horofunction compactification.
\medskip

\maketitle
\tableofcontents
\section{Introduction}
  Let $R$ be an oriented surface of genus $g$ with $n$ punctures and let $\mathcal{T}_{g,n}$ be the
  Teichm\"uller space of $R$. We shall view $\mathcal{T}_{g,n}$ as a space of equivalence classes of metrics on $R$.
  Thurston introduced a compactification of $\mathcal{T}_{g,n}$,
  which is used in his classification of diffeomorphisms of surfaces \cite{Thurston88}.
The boundary of this compactification is the space of projective classes of measured foliations
on $R$. The action of the mapping class group on $\mathcal{T}_{g,n}$ extends continuously to Thurston's boundary.

  There is an asymmetric Finsler metric on $\mathcal{T}_{g,n}$ defined by Thurston \cite{Thurston}.
  The geodesics of this metric are families of extremal Lipschitz maps between hyperbolic surfaces.
  The space $\mathcal{T}_{g,n}$ endowed with Thurston's  metric is a complete (asymmetric) geodesic metric space.
Unlike the classical Teichm\"uller metric, Thurston's metric is not uniquely geodesic.
 A special kind of geodesics for this
 metric, called \emph{stretch lines}, are constructed by ``stretching''  along
complete geodesic laminations of hyperbolic surfaces, that is, geodesic
laminations whose complementary regions are all ideal triangles.
The introduction of this metric paved the way to a whole set of new interesting questions on the geometry of Teichm\"uller space \cite{PT,Su}.

Thurston's compactification and Thurston's metric are closely related to each other.
The connection between
Thurston's compactification and the geodesic rays of Thurston's  metric was shown by
Papadopoulos \cite{Papa}.  To state things more precisely, let $\mu$ be a complete geodesic lamination.
(Note that we do not assume that $\mu$ carries a transverse invariant measure of full support.)
 Associated to $\mu$ is a global parametrization of
$\mathcal{T}_{g,n}$, called the \emph{cataclysm coordinates},  sending $\mathcal{T}_{g,n}$ to the set of measured foliations
transverse to $\mu$.
The cataclysm coordinates  extend continuously
to Thurston's boundary (see \cite[Theorem 4.1]{Papa} for a more precise statement).
In particular, a stretch line is determined by a measured foliation $F$ that is transverse to $\mu$ (called the horocyclic foliation associated to the stretch line) and this stretch line converges to the projective class of $F$ in Thurston's boundary \cite{Papa}.

 Walsh \cite{Walsh} showed that
Thurston's compactification of $\mathcal{T}_{g,n}$ can be naturally identified with the
horofunction compactification with respect to Thurston's metric.
Horofunction boundaries have the property that each geodesic ray converges to a point on the boundary.
As a corollary, every geodesic ray for Thurston's metric
converges to a point in Thurston's boundary.

Another direct corollary of
the result of Walsh \cite{Walsh} is that
any isometry of $\mathcal{T}_{g,n}$ equipped with Thurston's metric induces
a self-homeomorphism of Thurston's boundary. On the other hand, there is a ``detour cost'' distance (which is also asymmetric and which may take the value infinity) defined on Thurston's boundary
which is preserved by the isometries of $\mathcal{T}_{g,n}$, equipped with Thurston's metric. By calculating the detour cost between any two
projective measured foliations,
Walsh \cite{Walsh} proved that, with some exceptional cases, the isometry group of $\mathcal{T}_{g,n}$ equipped with Thurston's
metric is the extended mapping class group.

\bigskip

In this paper, we compare Thurston's compactification of  the Teichm\"uller spaces
of surfaces with boundary with the horofunction boundary of that space with respect to an appropriate metric, the \emph{arc metric} introduced in \cite{LPST}.

Thurston's asymmetric metric  can be defined by a formula which compares lengths of simple closed curves computed with the metrics representing the two elements in Teichm\"uller space (\S \ref{sec:metric}). The passage to the definition of the arc metric, using lengths of arcs, is very natural, but there are geometric questions whose solutions are far from obvious. We mention for instance that it is unknown whether the arc metric is Finsler, or whether it realizes the extremal Lipschitz constant of homeomorphisms between hyperbolic surfaces, as in the case of Thurston's metric on Teichm\"uller spaces of surfaces without boundary. We also do not know whether two points in Teichm\"uller space of a surface with boundary are joined by a concatenation of stretch lines. Working with arcs on a surface with boundary, instead of simple closed curves, involves several complications and requires new topological and geometrical tools, and this makes this subject interesting.

We now present our results in more detail.

Let $S$ be a hyperbolic surface of finite area with
totally geodesic boundary components and
let $\mathcal{T}(S)$ be the Teichm\"uller space of $S$.
There is an analogue of Thurston's compactification of $\mathcal{T}(S)$  defined using hyperbolic length and intersection
number with simple closed curves and
simple arcs on $S$ (see \S \ref{sec:boundary}). The boundary of such a compactification is identified with
the space of projective measured laminations on $S$,
which is homeomorphic to a sphere (see Theorem \ref{thm:double} and Proposition \ref{pro:sphere}).

We recall the definition of the arc metric in  \S \ref{sec:arc} and prove the following:

\begin{main}\label{thm:main}
Thurston's compactification of $\mathcal{T}(S)$ can be identified with the horofunction compactification of the arc metric on
$\mathcal{T}(S)$ by a natural homeomorphism.
\end{main}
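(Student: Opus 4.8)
\emph{Proof proposal.} The plan is to adapt the argument of Walsh \cite{Walsh}, replacing simple closed curves by simple closed curves \emph{and} simple arcs throughout. First I would set up the horofunction compactification of the asymmetric metric $d_A$. Fix a basepoint $x_0\in\mathcal{T}(S)$ and, for $Z\in\mathcal{T}(S)$, put $\psi_Z(Y)=d_A(Y,Z)-d_A(x_0,Z)$. The triangle inequality gives $-d_A(x_0,Y)\le\psi_Z(Y)\le d_A(Y,x_0)$ and $|\psi_Z(Y)-\psi_Z(Y')|\le\max\{d_A(Y,Y'),d_A(Y',Y)\}$, so $\{\psi_Z:Z\in\mathcal{T}(S)\}$ is equicontinuous and locally bounded, hence relatively compact in $C(\mathcal{T}(S))$ for the topology of uniform convergence on compact sets. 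Using properness of the symmetrization of $d_A$ (from \cite{LPST}), $Z\mapsto\psi_Z$ is a topological embedding; the closure $\overline{\mathcal{T}(S)}^{h}$ of its image is the horofunction compactification, and the points of its boundary are the \emph{horofunctions}.

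I would then use the formula $d_A(X,Y)=\log\sup_{\alpha}\ell_Y(\alpha)/\ell_X(\alpha)$ recalled in \S\ref{sec:arc}, the supremum being over essential simple closed curves and essential simple arcs. This functional is homogeneous of degree zero in $\alpha$, and arcs and curves are dense in $\mathcal{ML}(S)$, so $d_A(X,Y)=\log\max_{[\nu]\in\mathcal{PML}(S)}\ell_Y(\nu)/\ell_X(\nu)$, the maximum being attained since $\mathcal{PML}(S)$ is compact (Proposition~\ref{pro:sphere}) and $[\nu]\mapsto\ell_Y(\nu)/\ell_X(\nu)$ is continuous and positive. Recall also that $X_n\to[\mu]$ in Thurston's compactification (\S\ref{sec:boundary}) means there are scalars $\lambda_n>0$ with $\lambda_n\ell_{X_n}(\alpha)\to i(\mu,\alpha)$ for every $\alpha$. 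A technical point to pin down is that this forces $\lambda_n\ell_{X_n}(\cdot)\to i(\mu,\cdot)$ \emph{uniformly on compact subsets of} $\mathcal{ML}(S)$, the analogue of a standard fact for closed surfaces; here it can be obtained either through the doubling of Theorem~\ref{thm:double} or by a direct geodesic-current argument, but it must be handled in the presence of arcs.

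With this in hand, set $E_\mu(Y)=\max_{[\nu]\in\mathcal{PML}(S)}i(\mu,\nu)/\ell_Y(\nu)\in(0,\infty)$ and $\psi_{[\mu]}(Y)=\log\bigl(E_\mu(Y)/E_\mu(x_0)\bigr)$ for $[\mu]\in\mathcal{PML}(S)$. If $X_n\to[\mu]$ with $\lambda_n$ as above, then, since the ratio defining $d_A$ is homogeneous of degree zero, we may insert the factor $\lambda_n$ and pass to the limit using the uniform convergence on $\mathcal{PML}(S)$:
\[
\psi_{X_n}(Y)=\log\frac{\max_{[\nu]}\lambda_n\ell_{X_n}(\nu)/\ell_Y(\nu)}{\max_{[\nu]}\lambda_n\ell_{X_n}(\nu)/\ell_{x_0}(\nu)}\;\longrightarrow\;\log\frac{E_\mu(Y)}{E_\mu(x_0)}=\psi_{[\mu]}(Y),
\]
uniformly for $Y$ in a compact set. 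Conversely every horofunction arises this way, after extracting a subsequence with $X_n\to[\mu]$ in Thurston's compactification. Thus $\Phi\colon\overline{\mathcal{T}(S)}^{Th}\to\overline{\mathcal{T}(S)}^{h}$, defined as the identity on $\mathcal{T}(S)$ and sending $[\mu]$ to $\psi_{[\mu]}$, is a well-defined continuous surjection (continuity of $[\mu]\mapsto\psi_{[\mu]}$ follows from continuity of the intersection pairing). Since both compactifications are compact and Hausdorff, $\Phi$ is the desired homeomorphism---which is manifestly mapping-class-group equivariant, hence the identification is natural---once it is shown to be \emph{injective}, i.e.\ once $[\mu]\mapsto\psi_{[\mu]}$ is injective on $\mathcal{PML}(S)$.

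That injectivity is the heart of the matter, and it is where the boundary causes genuinely new difficulties. Following Walsh, one recovers information about $[\mu]$ from $\psi_{[\mu]}$ by evaluating it along sequences $Y_m\to[\mu']$ tending to Thurston's boundary: the computation above shows that $E_\mu(Y_m)$ is, up to the normalizing scalars, comparable to $\sup_{[\nu]}i(\mu,\nu)/i(\mu',\nu)$, so $\psi_{[\mu]}$ records, for each $[\mu']\in\mathcal{PML}(S)$, whether this ``detour cost'' is finite, and one must prove that these data determine $[\mu]$. For closed surfaces Walsh does this by a case analysis over all pairs in $\mathcal{PML}$; the same has to be carried out here with arcs present and with measured laminations whose leaves may meet or spiral onto $\partial S$---exactly the situation requiring new topological and geometric tools. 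I expect this detour-cost bookkeeping, rather than the soft construction of $\Phi$, to be the main obstacle.
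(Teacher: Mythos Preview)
Your soft setup---the embedding $Z\mapsto\psi_Z$, its relative compactness, the formula $\psi_{[\mu]}(Y)=\log\bigl(E_\mu(Y)/E_\mu(x_0)\bigr)$, and the continuity of $\Phi$---matches the paper's \S\ref{sec:arc} and Corollary~\ref{coro:convergent}. (One inaccuracy: single curves and arcs are \emph{not} dense in $\mathcal{ML}(S)$, only rational laminations are, cf.\ Lemma~\ref{lem:dense}; the identity $\sup_{\mathcal{C}\cup\mathcal{A}}=\sup_{\mathcal{ML}}$ is Proposition~\ref{pro:maximum} and requires a component-by-component argument.) You also correctly locate the difficulty in the injectivity of $[\mu]\mapsto\psi_{[\mu]}$.

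The genuine gap is in your plan for that injectivity. The detour computation you sketch---evaluate $\psi_{[\mu]}$ along a sequence $Y_m\to[\mu']$ and read off $\sup_\nu i(\mu,\nu)/i(\mu',\nu)$---does not follow from the uniform convergence $\lambda_m\ell_{Y_m}(\nu)\to i(\mu',\nu)$ alone. Since $i(\mu',\nu)$ vanishes on a large set (e.g.\ $\nu=\mu'$), the supremum $E_\mu(Y_m)=\sup_\nu i(\mu,\nu)/\ell_{Y_m}(\nu)$ is controlled by how fast $\ell_{Y_m}(\nu)$ decays for those $\nu$, and weak convergence says nothing about that rate. In the closed case Walsh takes $Y_m$ along a \emph{stretch line}, where Lemma~\ref{lemma:papa} gives the sharp two-sided bound $e^t i(\mu',\gamma)\le\ell_\gamma(Y_t)\le e^t i(\mu',\gamma)+C_\gamma$, and this is what makes the detour computation work. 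For $\partial S\neq\emptyset$ there are no stretch lines in $\mathcal{T}(S)$, and the paper states explicitly that Walsh's argument does not transfer (and that uniquely ergodic laminations are not dense in $\mathcal{ML}(S)$, blocking another shortcut). The paper's substitute is its main technical input, Lemma~\ref{lemma:key}: restrict a stretch line on the double $S^d$ to $S$ and prove, via pentagon/hexagon formulae and Th\'eret's estimates, that \emph{arc} lengths satisfy $e^t i(\mu,\alpha)-C\le\ell_\alpha(X_t)\le e^t i(\mu,\alpha)+C_\alpha$. Even then, the additive $-C$ in the lower bound (absent in the closed case) forces a further device in the injectivity proof (Proposition~\ref{pro:injective}, Lemma~\ref{lem:good}): enlarge $\mu$ to a lamination $\hat\mu$ so that every arc either is a leaf of $\hat\mu$ or crosses one, then use the Collar Lemma to make all relevant arc lengths uniformly large and absorb $C$. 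Your proposal supplies no replacement for either of these ingredients, so as written the injectivity step does not go through.
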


The proof of Theorem \ref{thm:main} depends on the study of the asymptotic behaviour of
the geodesic lengths of simple closed curves and arcs along certain paths on
$\mathcal{T}(S)$. In particular, for every measured lamination $\mu$, we will construct (Lemma \ref{lemma:key}) a path $X_t, t\in [0,+\infty)$ in $\mathcal{T}(S)$  such that
each simple closed curve or simple arc $\alpha$ on $S$ satisfies
$$e^{t} i(\mu,\alpha)-C \leq \ell_\alpha(X_t) \leq e^{t} i(\mu,\alpha)+C_\alpha,$$
where $C>0$ is a uniform constant and $C_\alpha>0$ is a constant depending on $\alpha$.

\begin{remark}It is reasonable to conjecture that,
in the case where $S$ has nonempty boundary, the isometry group of $\mathcal{T}(S)$ endowed with the arc metric
is the (extended) mapping class group $\mathrm{Mod}(S)$, with the usual exceptional surfaces that appear in the theory without boundary. 
As a matter of fact, if $S^d= S\cup \bar{S}$ be the double of $S$,
obtained by taking the mirror image $\bar{S}$ of $S$ and by identifying the corresponding boundary components by
an orientation-reversing homeomorphism,
then $S^d$ is a surface without boundary. We know that such a doubling induces an isometric embedding from
$\mathcal{T}(S)$  to $\mathcal{T}(S^d)$ (see \S \ref{sec:pre}). As a result, one may hope that Walsh's argument
can be applied.
However, the proof of Walsh depends on Thurston's construction of stretch maps,
which does not apply as such to $\mathcal{T}(S)$ when the surface $S$ has boundary components.
A further understanding of Thurston's compactification of $\mathcal{T}(S)$ and the action of isometry group
may require some generalized notion of (appropriately defined) ``stretch map" for surfaces with boundary.
\end{remark}

\section{Preliminaries}\label{sec:pre}

Throughout this paper, we denote by $S=S_{g,n,p}$ a connected orientable surface of finite type, of genus $g$ with $n$
punctures and  $p$  boundary components. We always assume that the Euler characteristic $\chi(S)=2-2g-n-p$ is $<0$ and that
the boundary of $S$, denoted by $\partial S$, is nonempty.

A \emph{hyperbolic structure} on $S$ is a complete metric of constant curvature $-1$ such that
\begin{enumerate}[(i)]
\item each puncture has a neighborhood which is isometric to a cusp, i.e., to
the quotient of $\{z=x+iy \in\mathbb{H}^{2}\  | \ y>a \}$, for some $a>0$,
by the group generated by the translation $z\mapsto z+1$;
\item each boundary component is a closed geodesic.
\end{enumerate}

A marked hyperbolic surface is a pair $(X, f)$, where
$X$ is a hyperbolic structure on $S$ and $f:S\to X$ an orientation-preserving homeomorphism.
 The map $f$ is called a \emph{marking}.
Two marked hyperbolic surfaces $(X_1, f_1)$ and $(X_2, f_2)$ are said to be equivalent if
there exists an isometry $h: X_1\to X_2$ which is homotopic
to $f_2 \circ f_1^{-1}$ (note that in our setting, homotopies fix each boundary component setwise but they do not need to fix it pointwise).
The \emph{reduced Teichm\"uller space} $\mathcal{T}(S)$  is the set of equivalence classes of
marked hyperbolic structures on $S$.

\begin{remark}
 Since all
Teichm\"uller spaces that we consider are reduced, we shall omit the word ``reduced"
in our exposition. Furthermore, we shall sometimes denote an equivalence class of $(X, f)$ in $\mathcal{T}(S)$ by $X$,
without explicit reference to the marking or to the equivalence relation.
\end{remark}

Let $S^d$ be the double of $S$ and $\mathcal{T}(S^d)$ the Teichm\"uller space of $S^d$.
Note that $S^d$ is a surface of genus $2g+p-1$ with $2n$ punctures, without boundary.
We construct a natural embedding of $\mathcal{T}(S)$ into $\mathcal{T}(S^d)$.

For any equivalence class of marked hyperbolic structures $[(X,f)]\in \mathcal{T}(S)$, we
let $\overline X$ be the isometric mirror image of $X$. The   hyperbolic surface $\overline X$ is equipped with an orientation-reversing
isometry $J: X\to \overline X$. Then $X^d$ is the hyperbolic surface
obtained by taking the disjoint union of $X$ and $\overline X$, and gluing
$\partial X$ with $\partial \overline X$ by the restriction of $J$ to the boundary. This map $J$ extends to an involution of $X^d$
which we still denote by $J$. Taking the double of a marked Riemann surface with boundary is a well-known operation, and it was already considered in Teichm\"uller's paper \cite{T20}. We are dealing here with the analogous operation, at the level of the associated hyperbolic structures. To determine a point in $\mathcal{T}(S^d)$, we have to choose a marking for $X^d$.
Note that we can modify the marking $f: S \to X$ in its homotopy class in such a way that
$f=\mathrm{id}$ in a small collar neighborhood of each boundary component.
We extend $f$ to a marking
$$\tilde{f}:S^d \to X^d$$
by setting

$$\tilde{f}(x)=   J \circ f \circ J(x)$$
when $x\in \overline X$. It is easy to check that the equivalence class $[(X^d, \tilde{f})]$ is independent of
the choice of $(X,f)\in [(X,f)]$.

We set $\Psi([(X,f)])=[(X^d, \tilde{f})]$ and we use for simplicity the notation
$\Psi(X)=X^d$. Then we have

\begin{proposition}
The map
\begin{eqnarray*}\label{eq:X}
\Psi: \mathcal{T}(S) &\to& \mathcal{T}(S^d), \\
X  &\mapsto& \Psi(X)=X^d.
\end{eqnarray*}
is an embedding.
\end{proposition}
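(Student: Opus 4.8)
The plan is to show that $\Psi$ is injective and continuous with continuous inverse onto its image; since $\mathcal{T}(S)$ and $\mathcal{T}(S^d)$ are finite-dimensional manifolds (real-analytic, even), the cleanest route is to exhibit $\Psi$ as a real-analytic map whose derivative is everywhere injective, or alternatively to use Fenchel--Nielsen coordinates to write $\Psi$ explicitly and read off injectivity. I would begin by recalling the Fenchel--Nielsen type coordinates adapted to surfaces with boundary: a pants decomposition of $S$ (where the boundary components of $S$ are among the cuffs) gives length coordinates for the interior curves together with twist parameters, and length coordinates for the boundary geodesics (with no twist parameter attached to a boundary curve, consistent with the homotopies in the reduced Teichm\"uller space being allowed to rotate the boundary). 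The double $S^d$ inherits the doubled pants decomposition, in which each interior curve of $S$ appears twice and each boundary curve of $S$ becomes an interior curve of $S^d$ appearing once.

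Next I would write down $\Psi$ in these coordinates. Under $\Psi$, an interior length parameter $\ell_\gamma$ of $S$ maps to the pair of equal length parameters $(\ell_\gamma,\ell_\gamma)$ of the two mirror copies in $S^d$; an interior twist parameter $\tau_\gamma$ maps to $(\tau_\gamma,-\tau_\gamma)$ (the mirror reversing the sign of the twist, with a possible fixed shift coming from the gluing convention, which does not affect injectivity); and a boundary length parameter $\ell_\beta$ of $S$ maps to the length parameter $\ell_\beta$ of the corresponding curve of $S^d$, with the associated twist parameter in $S^d$ being forced to $0$ (or to a fixed constant determined by the orientation-reversing gluing $J$). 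From this description it is immediate that $\Psi$ is a real-analytic injection and that its inverse, defined on the image, is just the projection recovering the $S$-coordinates, hence continuous; so $\Psi$ is a homeomorphism onto its image, i.e. an embedding. One should also note that the image is precisely the fixed-point set of the involution $X^d \mapsto \overline{X^d}$ induced by $J$ on $\mathcal{T}(S^d)$, which gives a coordinate-free confirmation that $\Psi$ is a well-defined embedding onto a real-analytic submanifold.

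The main point requiring care — and the step I expect to be the real obstacle — is verifying that the doubled hyperbolic structure $X^d$ is genuinely a smooth (indeed real-analytic) hyperbolic structure across the gluing curves, so that $\Psi(X) \in \mathcal{T}(S^d)$ in the first place, and that it depends real-analytically on $X$. This is the standard fact that gluing two hyperbolic surfaces along totally geodesic boundary components of equal length produces a smooth hyperbolic surface (the two collar neighborhoods fit together with no angle defect precisely because the boundaries are geodesic), but it must be stated, together with the observation that the marking $\tilde f$ is well-defined up to homotopy independently of the representative $(X,f)$ — which the excerpt has already checked. I would also remark that one must check $\Psi$ is well-defined at the level of equivalence classes, i.e. that an isometry $X_1 \to X_2$ homotopic to $f_2\circ f_1^{-1}$ doubles to an isometry $X_1^d \to X_2^d$ homotopic to $\tilde f_2 \circ \tilde f_1^{-1}$; this is immediate from the construction since the doubling is functorial. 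With these points in place, the Fenchel--Nielsen computation finishes the proof.
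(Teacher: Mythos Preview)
Your proposal is correct and follows essentially the same approach as the paper: both write $\Psi$ explicitly in Fenchel--Nielsen coordinates adapted to a pants decomposition of $S$ and its double, obtaining $(\ell_{\alpha_i},\tau_{\alpha_i})\times \ell_{\beta_j} \mapsto (\ell_{\alpha_i},\tau_{\alpha_i})\times(\ell_{\beta_j},0)\times(\ell_{\alpha_i},-\tau_{\alpha_i})$, and conclude that this is a real-analytic embedding. Your additional remarks on smoothness of the glued structure, well-definedness on equivalence classes, and the fixed-point description of the image are all sound but are not included in the paper's shorter proof.
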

\begin{proof}
An efficient way to see that $\Psi$ is an embedding is
to present $\Psi$ in terms of Fenchel-Nielsen coordinates.
We choose a maximal set $\{\alpha_i\}_{i=1}^{3g-3+n+p}$ of mutually disjoint and non homotopic simple closed curves in
the interior of $S$, all of them non-trivial and not homotopic to boundary components.
Denote the boundary components of $S$ by $\{\beta_j\}_{j=1}^{p}$.
The map

\begin{eqnarray*}
 \mathcal{T}(S) &\to& (\mathbb{R}_+\times \mathbb{R})^{3g-3+p+n}\times (\mathbb{R}_+)^{p}, \\
X  &\mapsto& \big(\ell_{\alpha_i}(X),\tau_{\alpha_i}(X)\big)\times \ell_{\beta_j}(X),
\end{eqnarray*}
where $\ell_{\alpha_i},\ell_{\beta_j}$ are the length coordinates and $\tau_{\alpha_i}$ the twist coordinates,
defines the Fenchel-Nielsen coordinates of $\mathcal{T}(S)$ (see Buser \cite{Buser}).

For each $1\leq i\leq 3g-3+n+p$, let $\bar{\alpha}_i\subset \overline{S}$ be the mirror image of $\alpha_i$.
Then $\{\alpha_i\}\cup \{\beta_j\}\cup\{\bar{\alpha}_i\}$ is a pants decomposition of $S^d$.
Denote the Fenchel-Nielsen coordinates of $\mathcal{T}(S^d)$ by
$$(\ell_{\alpha_i},\tau_{\alpha_i})\times (\ell_{\beta_j},\tau_{\beta_j})\times (\ell_{\bar{\alpha}_i},\tau_{\bar{\alpha}_i}).$$
Then the map $\Psi$ can be written in the Fenchel-Nielsen coordinates as

\begin{equation}\label{eq:embed}
(\ell_{\alpha_i},\tau_{\alpha_i})\times \ell_{\beta_j} \mapsto
(\ell_{\alpha_i},\tau_{\alpha_i})\times (\ell_{\beta_j},0)\times
(\ell_{{\alpha}_i},-\tau_{{\alpha}_i}).
\end{equation}
Note that $\tau_{\bar \alpha_i}=-\tau_{\alpha_i}$ since the mirror image of a right twist deformation
on $X$ becomes a left twist deformation on $\overline X$.

Since the Fenchel-Nielsen coordinates are real-analytic global parameters for Teichm\"uller spaces,
and the map \eqref{eq:embed} is a real-analytic embedding, it follows that
$\Psi$ gives a real-analytic embedding of $\mathcal{T}(S)$ into $\mathcal{T}(S^d)$.
\end{proof}

The map $\Psi$ will be an isometric embedding if we equip $\mathcal{T}(S)$ with the arc metric and
$\mathcal{T}(S^d) $ with Thurston's metric \cite{LPST}.
We shall recall the definition of Thurston's metric  in \S \ref{sec:metric}  and  the arc metric in \S \ref{sec:arc}.

We consider the involution $J:S^d\to S^d$ on $\mathcal{T}(S^d)$ as an element of the extended mapping class group (that is, we identity when needed a map with its homotopy class).
We set
$$\mathcal{T}^{sym}(S^d):=\{R \in \mathcal{T}(S^d) \ | \ J(R)=R\}.$$
It is not hard to see that there is a canonical identification $\Psi (\mathcal{T}(S))\simeq \mathcal{T}^{sym}(S^d)$.

\section{Measured laminations and Thurston's compactification}\label{sec:boundary}
In this section, we recall the notion of measured lamination space and the Thurston compactification of
Teichm\"uller space, and their extensions
to hyperbolic surfaces with geodesic boundaries. Part of our results here is a continuation of work done in \cite{LPST}.
\subsection{Measured laminations}
In the setting of surfaces with boundary,
we need to be precise on the definition of measured geodesic laminations that we deal
with.

We endow $S$ with a fixed hyperbolic structure. A \emph{geodesic lamination} $\lambda$ on $S$
is a closed subset of $S$ which is the union of disjoint simple geodesics called
the \emph{leaves} of $\lambda$.
With such a definition, a leaf $L$ of $\lambda$ may be a  boundary component of $S$. It may also be a geodesic
ending at a cusp or  a boundary component of $S$. Furthermore, $L$ may meet a boundary component of $S$ or spiral along it.
If $L$ is a geodesic with some end at a point $p\in \partial S$, we require that
 $L$ is perpendicular to $\partial S$ at $p$.

Let $\lambda$ be a geodesic lamination on $S$ with compact support.
A \emph{transverse measure} for $\lambda$
is an assignment, for each embedded arc $k$ on $S$ which is transverse to $\lambda$ and with endpoints
contained in the complement of $\lambda$,
of a finite Borel measure $\mu$ on $k$ with the following properties:

\begin{enumerate}
\item The support of $\mu$ is $\lambda\cap k$.
\item For any two transverse arcs $k$ and $k'$ that are homotopic through embedded arcs which move
their endpoints within fixed complementary components of $\mu$,  the assigned measures satisfy
$$\mu(k)=\mu(k').$$
\end{enumerate}

A \emph{measured geodesic lamination} is a geodesic lamination $\lambda$
together with a transverse measure.
To simplify notation, we shall sometimes talk about a ``measured lamination"
instead of a ``measured geodesic lamination".
We shall denote such a measured lamination by $(\lambda,\mu)$ or, sometimes, $\mu$ for simplicity.

All the measured laminations we consider are assumed to have compact support.
An example of a measured lamination is a weighted simple closed
geodesic, that is,  a simple closed geodesic $\alpha$ equipped with a positive weight $a > 0$.
The measure disposed on a transverse arc $k$ is then the sum of the Dirac masses at the
intersection points between $k$ and $\alpha$ multiplied by the weight $a$.
In general, a lamination is a finite union of uniquely defined minimal  sub-laminations,
 called its \emph{components}. With the assumptions we made, each such component is of one of the following
three types:
\begin{enumerate}[(i)]
\item a simple closed geodesic in $S$ (such a simple closed geodesic can be a boundary component);
\item a geodesic arc meeting $\partial S$ at right angles;
\item a measured geodesic lamination in the interior of $S$, in which every leaf is
dense.
\end{enumerate}
This follows from our definition and from the corresponding result for
surfaces without boundary.

 Let $\mathcal{ML}(S)$ be the space of measured geodesic laminations on $S$.
 We shall equip $\mathcal{ML}(S)$  with the $\mathrm{weak}^\ast$-topology,
 following Thurston \cite{Thurston-notes} in the case of surfaces without boundary.
We can choose a finite collection of generic geodesic arcs $k_1,\cdots, k_m$ on $S$
such that $\mu_n\in \mathcal{ML}(S)$ converges to $\mu$ if and only if
$$\max_{i=1,\cdots,m}\big| \int_{k_i} d\mu_n - \int_{k_i}d\mu \big|\to 0.$$
Here a geodesic arc is called \emph{generic} if it is transverse to any simple geodesic
on $S$. Note that almost every geodesic arc on $S$ is generic \cite{Bonahon}.

 We also recall that there are natural
homeomorphisms between the various measured lamination spaces when the hyperbolic structure
on the surface varies. Using this fact, it is possible to talk about a measured geodesic
lamination on the surface without referring to a specific hyperbolic structure on it.

\bigskip

Let $S^d$ be the double of $S$ and $\mathcal{ML}(S^d)$ be
the space of measured geodesic laminations on $S^d$. As before, denote the natural involution
of $S^d$ by $J$. For any subset $A\subset S$ or $A\subset \overline S$, we denote by $\bar A=J(A)$.
Moreover, if $\mu$ is a measure on an arc $I$ on $S$ or $\overline S$, then we set
$\bar\mu(I)=\mu(J(I))$.
From the above definition of  measured geodesic lamination on $S$,
there is
a natural inclusion $\psi$ from $\mathcal{ML}(S)$ into the space $\mathcal{ML}(S^d)$
defined by

\begin{eqnarray*}
\psi:  \mathcal{ML}(S) &\to&  \mathcal{ML}(S^d) \\
(\lambda,\mu)&\mapsto& (\lambda\cup\bar\lambda, \mu+\bar\mu).
\end{eqnarray*}
We will use the notation $\mu^d= (\lambda\cup\bar\lambda, \mu+\bar\mu)$ for simplicity.
Note that if $\mu$ is a weighted simple closed
geodesic $(\alpha,a)$ where $\alpha$ is a boundary component of $S$ and $a$ the weight it carries,
then $\mu^d=(\alpha,2a)$.

A measured lamination (respectively, hyperbolic structure, simple closed curve,
etc.) on $S^d$ is said to be \emph{symmetric} if it is invariant by the canonical involution $J$.
Denote the subset of all symmetric measured laminations in $\mathcal{ML}(S^d)$
by $\mathcal{ML}^{sym}(S^d)$.
\begin{lemma}\label{lem:sym}
The map $\psi:\mathcal{ ML}(S)\to \mathcal{ML}(S^d)$ is continuous and we have a natural identification  $$\mathcal{ML}^{sym}(S^d)=\psi(\mathcal{ML}(S)).$$
\end{lemma}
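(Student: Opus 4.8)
The plan is to establish two things: (a) that $\psi$ is continuous, and (b) that its image is exactly $\mathcal{ML}^{sym}(S^d)$. For continuity, I would work with the $\mathrm{weak}^\ast$-topology as described in the excerpt. Fix a hyperbolic structure on $S$ and equip $S^d$ with the doubled structure, so that $J$ is an isometry. Choose a finite family of generic arcs $\{k_i\}$ on $S^d$ computing the topology on $\mathcal{ML}(S^d)$; by replacing this family with $\{k_i\}\cup\{J(k_i)\}$ if necessary (a family of generic arcs whose $J$-images are also generic, which is possible since $J$ is a diffeomorphism and genericity is a full-measure condition preserved under diffeomorphism), I may assume the collection is $J$-invariant. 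For a measured lamination $(\lambda,\mu)$ on $S$, the doubled transverse measure satisfies $\int_{k_i} d(\mu+\bar\mu) = \int_{k_i\cap S} d\mu + \int_{J(k_i)\cap S} d\mu$ (after cutting the arc along $\partial S$ into subarcs lying in $S$ and in $\overline S$, using that $\mu$ and $\bar\mu$ are supported in $S$ and $\overline S$ respectively and that leaves meeting $\partial S$ do so at right angles so generic arcs crossing $\partial S$ still measure correctly). Hence each coordinate $\int_{k_i} d\mu^d$ is a finite sum of integrals of $\mu$ over finitely many generic subarcs of $S$, so it depends continuously on $\mu\in\mathcal{ML}(S)$; this gives continuity of $\psi$.

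For the identification, the inclusion $\psi(\mathcal{ML}(S))\subseteq \mathcal{ML}^{sym}(S^d)$ is immediate from the construction: $(\lambda\cup\bar\lambda,\mu+\bar\mu)$ is visibly invariant under $J$, since $J$ swaps $\lambda$ with $\bar\lambda$ and $\mu$ with $\bar\mu$. The content is the reverse inclusion. Given a symmetric measured lamination $(\Lambda,\nu)$ on $S^d$, I want to show it is the double of a measured lamination on $S$. The key geometric input is that the fixed-point set of $J$ on $S^d$ is exactly the common boundary $\partial S = \partial\overline S$, a union of disjoint simple closed geodesics. A leaf $L$ of $\Lambda$ is either disjoint from $\partial S$, or meets it. If $L\cap\partial S\neq\emptyset$ and $L$ is not itself a component of $\partial S$, then since $\Lambda$ is a geodesic lamination its leaves are pairwise disjoint and $J$-invariance forces $L$ and $J(L)$ to be either equal or disjoint; a leaf transverse to $\partial S$ that is $J$-invariant would be forced to cross its own image, which is impossible unless it is perpendicular to $\partial S$ and gets folded onto an arc in $S$. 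I would argue that $\Lambda$ contains no leaf transverse to a fixed geodesic of $J$ except perpendicularly, using the standard fact that a geodesic lamination on a hyperbolic surface cannot have a leaf crossing a closed geodesic that is itself a leaf or a limit of leaves transversally-and-invariantly; more directly, $\Lambda\cap\partial S$ must be a (possibly empty) $J$-invariant closed subset consisting of whole boundary components together with the perpendicular intersection points of leaves of the ``arc'' type. Concretely: decompose $\Lambda = \Lambda_0 \cup \Lambda_\partial$, where $\Lambda_\partial$ is the union of those components equal to boundary circles of $S$, and $\Lambda_0$ is the rest. Then $\Lambda_0 \cap \partial S$ is a discrete $J$-invariant set of points; each point is a perpendicular crossing, and near such a point $J$ reflects the two sides. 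Set $\lambda := (\Lambda \cap \overline{S}) $ together with, for leaves crossing $\partial S$, the half lying in $S$; by perpendicularity at $\partial S$ and symmetry this is a genuine geodesic lamination on $S$ with leaves of the three permitted types (closed geodesics, perpendicular arcs, minimal laminations in the interior). The transverse measure $\nu$ restricts to $\lambda$: for an arc $k$ in $S$ transverse to $\lambda$, define $\mu(k) := \nu(k)$ computed in $S^d$; invariance of $\nu$ under $J$ and the fact that $\nu$ assigns $\lambda$ and $\bar\lambda$ the same masses shows that $\nu = \mu + \bar\mu$ and $\Lambda = \lambda\cup\bar\lambda$, i.e. $(\Lambda,\nu)=\psi(\lambda,\mu)$. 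One caveat to record, matching the remark in the excerpt: if $\Lambda$ has a boundary circle of $S$ as a component with weight $2a$, it comes from the weighted boundary geodesic $(\alpha,a)$ on $S$, so the halving convention there is consistent.

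The main obstacle I expect is the geometric step showing that a $J$-symmetric geodesic lamination on $S^d$ cannot have a leaf crossing $\partial S$ other than perpendicularly — equivalently, that symmetric laminations restrict cleanly to laminations on $S$ of the three permitted types. This is where the convention built into the definition of geodesic lamination on $S$ (leaves meet $\partial S$ at right angles) is essential, and it must be extracted from the structure theory of geodesic laminations: two distinct leaves of a lamination are disjoint, so a $J$-invariant leaf crossing the fixed geodesic $\partial S$ transversally would meet $J(L)$; if $L\neq J(L)$ this contradicts disjointness once the crossing is transverse, and if $L = J(L)$ then $L$ is preserved by the reflection $J$, which for a complete geodesic forces $L$ to be perpendicular to the axis $\partial S$ (a geodesic invariant under a reflection is either the axis or perpendicular to it). Handling leaves that merely accumulate on $\partial S$ or spiral toward it requires a short limiting argument: such behavior on one side is mirrored on the other, and the union remains a closed set, so no new phenomena arise beyond the three listed types. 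Once this structural point is in place, the measure-theoretic bookkeeping ($\nu = \mu+\bar\mu$) is routine, using the defining homotopy-invariance property of transverse measures and the $J$-equivariance of $\nu$.
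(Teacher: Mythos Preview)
Your approach is essentially the same as the paper's: both prove $\psi(\mathcal{ML}(S))\subseteq\mathcal{ML}^{sym}(S^d)$ trivially, both use the reflection $J$ to force perpendicular intersection with $\partial S$, and both deduce continuity from the weak$^\ast$ description. The paper is much terser on continuity (one line, ``follows directly from the definition of the weak$^\ast$-topology''), whereas you spell out the generic-arc bookkeeping; that extra detail is fine.

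There is, however, one point where the paper is sharper and where your argument is not quite complete. You assert that $\Lambda_0\cap\partial S$ is discrete and that the restriction $\Lambda\cap S$ is automatically a lamination of the three permitted types, but you do not justify why a \emph{minimal} (non--closed-leaf) component of $\Lambda$ cannot meet $\partial S$. The paper handles this in one stroke: any component of the symmetric lamination that meets the fixed locus of $J$ must be a simple closed geodesic, ``because of the recurrence of leaves.'' Concretely, once you know (as you correctly argue) that a $J$-invariant leaf $L$ meeting $\partial S$ does so perpendicularly, recurrence forces $L$ to meet $\partial S$ perpendicularly at a \emph{second} point, and a geodesic invariant under reflections through two distinct points carries a nontrivial translation, hence is closed. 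Without this step, you have not excluded the possibility of a minimal component of $\Lambda$ whose leaves cross $\partial S$ in a Cantor set of perpendicular points, in which case $\Lambda\cap S$ would not obviously be a lamination of types (i)--(iii). You have all the ingredients for this step (your reflection argument is exactly what is needed), so the fix is simply to insert the two-reflection observation and invoke recurrence of leaves in a compactly supported measured lamination.
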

\begin{proof}

It is obvious that all elements in $\psi(\mathcal{ML}(S))$ are symmetric.

Conversely, let $\widetilde{\mu}$ be a symmetric measured lamination in $\mathcal{ML}(S^d)$.
Every component of $\mu$
which meets the fixed point locus of the involution $J$ is, if it exists, a simple closed
geodesic. Indeed, such a component must intersect the fixed point locus perpendicularly,
and no component which is not a simple closed geodesic can intersect the fixed
point locus in this way, because of the recurrence of leaves.
It follows that any sublamination of
$\widetilde{\mu}$ which is connected (that is, which has only one component) that intersects $\partial S$
is either a boundary component of $S$ or a symmetric closed geodesic meeting $\partial S$ at right angles.
As a result, the restriction of $\widetilde{\mu}$ to $S$ defines a unique measured lamination $\mu\in\mathcal{ML}(S)$
such that $\widetilde{\mu}=\psi(\mu)=\mu^d$.

The continuity of $\psi$ follows directly from the definition of the $\mathrm{weak}^\ast$-topology
on measured lamination spaces.
\end{proof}
\subsection{Rational measured laminations are dense in $\mathcal{ML}(S)$.}
We say that a simple closed curve on a surface  is \emph{essential} if it is neither
 homotopic to a puncture nor homotopic to a point (but it can be homotopic to a boundary component).
We let $\mathcal{C}(S)$ be the set of homotopy classes of essential simple closed curves
on $S$.

In the case where $\partial S$ is nonempty, an \emph{arc} in $S$ is the homeomorphic image of a closed interval which is properly
embedded in $S$ (that is, the interior of the arc is in the interior of $S$ and the endpoints
of the arc are on the boundary of $S$). All homotopies of arcs that we consider are
relative to $\partial S$, that is, they keep the endpoints of arcs on the set $\partial S$ (but they do not
necessarily fix pointwise the points on $\partial S$). An arc is said to be \emph{essential}
if it is not homotopic to a subset of $\partial S$.
We let $\mathcal{A}(S)$ be the set of homotopy classes of essential arcs on $S$.

Endowing $S$ with a hyperbolic structure $X$, for any $\gamma\in \mathcal{A}(S)\cup \mathcal{C}(S)$,
there is a unique geodesic $\gamma^X$ in its homotopy class. It is orthogonal to $\partial X$ at each intersection point, in the case where
$\gamma$ is an equivalence class of arc. We denote by
$\ell_\gamma(X)$ the length of
$\gamma^X$, and we call it the \emph{geodesic length} of $\gamma$ on $X$. This length only depends on
the equivalence class of $X$ in Teichm\"uller space.

The geodesic representation $\gamma \mapsto \gamma^X$ defines a
correspondence between $\mathbb{R}_+\times \big( \mathcal{A}(S)\cup \mathcal{C}(S)\big)$
and the set of  weighted simple closed geodesics and weighted simple geodesic arcs on $S$.

\bigskip

A measured lamination $\mu$ is \emph{rational} if the support of $\mu$ consists of simple closed geodesics
or simple geodesic arcs. Let us denote a rational measured lamination by
$$\sum_{i\in\mathcal{I}} a_i \gamma_i,$$
where $\mathcal{I}$ is some finite set, $a_i>0$ and the $\gamma_i\in \mathcal{A}(S)\cup \mathcal{C}(S)$ are pairwise disjoint.

The set of weighted simple closed curves on $S^d$ is dense in the
space $\mathcal{ML}(S^d)$, and the geodesic length function, defined on weighted simple
closed geodesics, extends to a continuous function on the space $\mathcal{ML}(S^d)$
\cite{Thurston-notes}. The situation is slightly different for surfaces with boundary.

In general,  the set $\mathbb{R}_+\times \mathcal{A}(S)\cup \mathbb{R}_+\times \mathcal{C}(S) $
is not dense in $\mathcal{ML}(S)$. For example, if $\mu=\alpha+\beta$ where $\alpha$ is a simple closed curve
in the interior of $S$ and $\beta$ is a boundary component of $S$, then $\mu$ cannot be approximated by
a sequence in $\mathbb{R}_+\times \mathcal{A}(S)\cup \mathbb{R}_+\times \mathcal{C}(S)$. However, using multiple curves and arcs instead of curves and arcs suffices, and we have the following:

\begin{lemma}\label{lem:dense}
The set of  rational measured laminations on $S$ is dense in $\mathcal{ML}(S)$.
\end{lemma}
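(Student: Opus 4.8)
The plan is to reduce the statement to the analogous (known) density of weighted multicurves in $\mathcal{ML}(S^d)$ via the inclusion $\psi$ and the structure theory of symmetric laminations established in Lemma \ref{lem:sym}. First I would fix a measured lamination $\mu \in \mathcal{ML}(S)$ and pass to its double $\mu^d = \psi(\mu) \in \mathcal{ML}^{sym}(S^d)$. Since weighted multicurves are dense in $\mathcal{ML}(S^d)$ by Thurston's theory \cite{Thurston-notes}, there is a sequence of weighted multicurves $c_n \to \mu^d$. The issue is that $c_n$ need not be symmetric, so I would \emph{symmetrize}: replace $c_n$ by the geodesic lamination underlying $\frac{1}{2}(c_n + J(c_n))$ (adding up transverse measures, then taking geodesic representatives). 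Because $J$ is an isometry and $J(\mu^d) = \mu^d$, continuity of $J_*$ on $\mathcal{ML}(S^d)$ gives $J(c_n) \to \mu^d$ as well, hence the symmetrized sequence still converges to $\mu^d$. The symmetrized objects are symmetric weighted multicurves, i.e.\ elements of $\mathcal{ML}^{sym}(S^d)$; a small point to check is that the symmetrization of a multicurve is again carried by finitely many simple closed geodesics (two symmetric arcs of simple closed geodesics may fail to be disjoint, but after passing to geodesic representatives one still gets a finite geodesic lamination supported on closed geodesics, which is all that is needed).

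Next I would invoke Lemma \ref{lem:sym}: each symmetric weighted multicurve $d_n \in \mathcal{ML}^{sym}(S^d)$ is of the form $\psi(\mu_n)$ for a unique $\mu_n \in \mathcal{ML}(S)$, and by the description of symmetric components, $\mu_n$ is supported on simple closed geodesics and simple geodesic arcs of $S$ (the components of $d_n$ that cross $\partial S$ restrict to arcs meeting $\partial S$ at right angles, those disjoint from $\partial S$ restrict to one or two closed curves, and boundary components restrict to boundary curves). Hence each $\mu_n$ is a rational measured lamination on $S$ in the sense defined above. It remains to transfer the convergence $d_n = \psi(\mu_n) \to \psi(\mu)$ in $\mathcal{ML}(S^d)$ back to $\mu_n \to \mu$ in $\mathcal{ML}(S)$. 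For this I would use that $\psi$ is a continuous injection whose image is exactly $\mathcal{ML}^{sym}(S^d)$ (Lemma \ref{lem:sym}) and that $\mathcal{ML}^{sym}(S^d)$ is a closed subset of $\mathcal{ML}(S^d)$; combined with the fact that $\mathcal{ML}(S)$ is (locally) compact and $\psi$ is proper, $\psi$ is a homeomorphism onto its image, so $\psi(\mu_n) \to \psi(\mu)$ forces $\mu_n \to \mu$.

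The main obstacle is the last transfer step, i.e.\ showing that $\psi$ is a \emph{closed} embedding (equivalently, that convergence downstairs can be detected upstairs). Concretely, one must check that the weak$^*$ topology on $\mathcal{ML}(S)$ defined by integration against finitely many generic arcs $k_1,\dots,k_m$ on $S$ is recovered by integrating against generic arcs on $S^d$: given the generic arcs $k_i$ on $S$ one can choose generic arcs on $S^d$ whose restriction (or whose $J$-symmetrization) sees $k_i$, so that $\int_{k_i} d\mu_n \to \int_{k_i} d\mu$ follows from convergence of the corresponding integrals on $S^d$. Care is needed with arcs $k_i$ that touch $\partial S$ and with the fact that the doubled measure is $\mu + \bar\mu$ rather than $\mu$, but since $k_i \subset S$ and leaves of $\bar\lambda$ lie in $\bar S$, the contribution of $\bar\mu$ to $\int_{k_i}$ is controlled (it vanishes for $k_i$ in the interior, and is accounted for symmetrically near $\partial S$). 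Once this bookkeeping is done, the sequence of rational $\mu_n$ converges to the arbitrary $\mu$, proving density.
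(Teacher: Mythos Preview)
Your argument has a genuine gap at the symmetrization step. The sum $\tfrac{1}{2}(c_n + J(c_n))$ is not in general an element of $\mathcal{ML}(S^d)$: if a component $\alpha$ of the multicurve $c_n$ crosses the fixed locus $\partial S \subset S^d$ at a point $p$ at an angle $\theta \neq \pi/2$, then $J(\alpha)$ is a geodesic through the same point $p$ making angle $\pi-\theta$ with $\partial S$, so $\alpha$ and $J(\alpha)$ intersect transversally at $p$ and cannot be carried by any common geodesic lamination. Your remark that ``after passing to geodesic representatives one still gets a finite geodesic lamination'' does not help, since both curves are already geodesic; the transverse intersection is intrinsic. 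One could attempt a repair by working in the space of geodesic currents (where addition is always defined, but the result need not be a lamination), or by arranging from the outset that the approximating multicurves be symmetric---for instance by taking rational points in symmetric Dehn--Thurston coordinates as in Proposition~\ref{pro:sphere}---but either route is substantially more work than you indicate and essentially replaces your argument.

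A secondary issue: you need $\psi$ to be a homeomorphism onto its image in order to transfer convergence back to $\mathcal{ML}(S)$, but Lemma~\ref{lem:sym} only gives continuity and bijectivity onto $\mathcal{ML}^{sym}(S^d)$. Continuity of $\psi^{-1}$ can be extracted from the coordinate description in Proposition~\ref{pro:sphere}, but it is not a consequence of what you cite.

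The paper's proof avoids the double entirely and is much shorter. Given $\mu\in\mathcal{ML}(S)$, decompose it into its finitely many components. Those that are simple closed geodesics or geodesic arcs are already rational. Each remaining component is a minimal measured lamination lying in the \emph{interior} of $S$, hence contained in a geodesically convex subsurface whose interior is disjoint from the other components; on that subsurface (which has no relevant boundary interaction) the classical density of weighted simple closed curves applies directly. Summing the approximations component by component yields a sequence of rational measured laminations converging to $\mu$.
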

\begin{proof}
Let $\mu\in \mathcal{ML}(S)$.
Each component of $\mu$ is either a simple closed geodesic, 
a geodesic arc or a minimal measured lamination in the interior of $S$.
Each minimal component is contained
in a geodesically convex subsurface, whose interior is disjoint from the other components.
Thus, by Thurston's theory,  each minimal component can be approximated by a sequence of weighted simple closed geodesics
in the interior of the subsurface.
\note{}
\end{proof}

\begin{proposition}\label{pro:maximum}
For every $X$ and $Y$ in $\mathcal{T}(S)$, we have
\begin{equation}\label{equ:maximum}
\sup_{\gamma\in \mathcal{C}(S)\cup \mathcal{A}(S)}  \frac{\ell_\gamma(Y)}{\ell_\gamma(X)}= \sup_{\mu\in \mathcal{ML}(S)}  \frac{\ell_\mu(Y)}{\ell_\mu(X)}.
\end{equation}
\end{proposition}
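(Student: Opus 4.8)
The plan is to prove the equality by establishing the two inequalities separately, using the density of rational measured laminations (Lemma \ref{lem:dense}) together with continuity of length functions. Write $\Lambda = \sup_{\gamma\in \mathcal{C}(S)\cup\mathcal{A}(S)} \ell_\gamma(Y)/\ell_\gamma(X)$ for the left-hand side and $M = \sup_{\mu\in\mathcal{ML}(S)} \ell_\mu(Y)/\ell_\mu(X)$ for the right-hand side.

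First I would prove $\Lambda \le M$. This direction is essentially trivial: every $\gamma \in \mathcal{C}(S)\cup\mathcal{A}(S)$, with weight $1$, is a (rational) measured lamination, so each ratio $\ell_\gamma(Y)/\ell_\gamma(X)$ appears among the ratios being supremized on the right, whence $\Lambda \le M$.

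Next I would prove $M \le \Lambda$. The key point is that for a rational measured lamination $\mu = \sum_{i\in\mathcal{I}} a_i\gamma_i$ with the $\gamma_i$ pairwise disjoint, the length is additive: $\ell_\mu(X) = \sum_{i} a_i \ell_{\gamma_i}(X)$, and similarly for $Y$. Therefore
\begin{equation*}
\frac{\ell_\mu(Y)}{\ell_\mu(X)} = \frac{\sum_i a_i \ell_{\gamma_i}(Y)}{\sum_i a_i \ell_{\gamma_i}(X)} \le \max_{i\in\mathcal{I}} \frac{\ell_{\gamma_i}(Y)}{\ell_{\gamma_i}(X)} \le \Lambda,
\end{equation*}
using the elementary ``mediant'' inequality for a weighted average of ratios of positive numbers. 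Thus every rational measured lamination $\mu$ satisfies $\ell_\mu(Y)/\ell_\mu(X) \le \Lambda$. Now let $\mu \in \mathcal{ML}(S)$ be arbitrary. By Lemma \ref{lem:dense}, choose rational measured laminations $\mu_n \to \mu$ in the weak$^\ast$ topology. Since the geodesic length function is continuous on $\mathcal{ML}(S)$ (as for $\mathcal{ML}(S^d)$ after passing through the embedding $\psi$ of Lemma \ref{lem:sym}, or directly), we have $\ell_{\mu_n}(X) \to \ell_\mu(X)$ and $\ell_{\mu_n}(Y) \to \ell_\mu(Y)$. If $\ell_\mu(X) > 0$, passing to the limit in $\ell_{\mu_n}(Y)/\ell_{\mu_n}(X) \le \Lambda$ gives $\ell_\mu(Y)/\ell_\mu(X) \le \Lambda$; and the case $\ell_\mu(X)=0$ is excluded because $\chi(S)<0$ forces every nonzero measured lamination to have positive length in any hyperbolic structure (and the empty lamination contributes no ratio). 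Taking the supremum over all $\mu$ yields $M \le \Lambda$, completing the proof.

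The main obstacle is the continuity of the geodesic length function on $\mathcal{ML}(S)$ for surfaces with boundary, since arcs (not just closed curves) are involved and one must check that lengths of geodesic arcs vary continuously with the lamination in the weak$^\ast$ topology; this can be handled either by invoking the continuity of the length function on $\mathcal{ML}(S^d)$ together with the continuity and length-compatibility of the doubling map $\psi$ (Lemma \ref{lem:sym}, noting $\ell_{\mu^d}(X^d) = 2\ell_\mu(X)$), or by a direct argument using the generic transverse arcs $k_1,\dots,k_m$ defining the topology. A secondary point worth a sentence is verifying that the supremum is unaffected by whether we allow weighted multicurves/multiarcs or only single curves and arcs on the left, but the mediant inequality above shows adding multicurves does not change $\Lambda$.
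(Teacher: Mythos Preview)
Your proof is correct and clean. It differs from the paper's argument in structure: the paper first uses compactness of the unit sphere $\mathcal{ML}_1(S)=\{\mu:\ell_\mu(X)=1\}$ (established via the embedding $\psi$ into $\mathcal{ML}(S^d)$) to find a maximizer $\mu_0$, then decomposes $\mu_0$ into minimal components $\sum_i a_i\nu_i$, argues that each $\nu_i$ must itself be a maximizer (this is the same mediant principle you use, but applied only at the extremum), and finally approximates each single minimal component by elements of $\mathbb{R}_+\times(\mathcal{C}\cup\mathcal{A})$. Your route bypasses the maximizer entirely: you bound every rational lamination by $\Lambda$ via the mediant inequality, then invoke Lemma~\ref{lem:dense} (density of rationals) together with continuity of length to extend the bound to all of $\mathcal{ML}(S)$. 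Your approach is a touch more elementary since it does not need the compactness step; the paper's version, on the other hand, yields as a byproduct the fact that the supremum over $\mathcal{ML}(S)$ is actually a maximum, attained on a single minimal lamination. Your handling of the continuity issue via $\ell_{\mu^d}(X^d)=2\ell_\mu(X)$ and the continuity of $\psi$ (Lemma~\ref{lem:sym}) is correct and matches how the paper itself uses the double elsewhere.
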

\begin{proof}
It is obvious that
$$ \sup_{\gamma\in \mathcal{C}(S)\cup \mathcal{A}(S)}  \frac{\ell_\gamma(Y)}{\ell_\gamma(X)}\leq \sup_{\mu\in \mathcal{ML}(S)}  \frac{\ell_\mu(Y)}{\ell_\mu(X)}.
$$

Let us set $$\mathcal{ML}_1(S)=\{\mu\in \mathcal{ML}(S) \ | \ \ell_\mu(X)=1\}$$
and $$\mathcal{ML}_2(S^d)=\{\tilde{\mu}\in \mathcal{ML}(S^d) \ | \ \ell_{\tilde{\mu}}(X)=2\}.$$
The map $\psi$ sends $\mathcal{ML}_1(S)$ into $\mathcal{ML}_2(S^d)$.

Since $\mathcal{ML}_2(S^d)$ is compact and $\psi(\mathcal{ML}_1(S))$ is a closed subset of
$\mathcal{ML}_2(S^d)$, $\mathcal{ML}_1(S)$ is a compact subset of $\mathcal{ML}(S)$. Therefore, there is a measured lamination $\mu_0\in \mathcal{ML}_1(S)$
that realizes the maximum:
\begin{equation}\label{eq:sK}
\sup_{\mu\in \mathcal{ML}(S)}  \frac{\ell_\mu(Y)}{\ell_\mu(X)}=\frac{\ell_{\mu_0}(Y)}{\ell_{\mu_0}(X)}.
\end{equation}

Consider the decomposition of $\mu_0$
 into minimal
components,
$$\mu_0= \sum_i a_i\nu_i.$$ Let $K$ be the value of the supremum in $(\ref{eq:sK})$.  We
have $\ell_{\mu_0}(Y)=K\ell_{\mu_0}(X)$, that is, since the length function is positively homogeneous,
$$\sum_i a_i\ell_{\nu_i}(Y)=K \sum_i a_i\ell_{\nu_i}(X).$$
Since $\ell_{\nu_i}(Y)\leq K\ell_{\nu_i}(X)$ (from the definition), it follows that
$$\ell_{\nu_i}(Y)= K \ell_{\nu_i}(X)$$
for each $\nu_i$. As a result, any component of $\mu_0$
 also realizes the supremum $L$.

 As before, since each component of $\mu_0$ is either a simple closed geodesic, a geodesic arc or a minimal measured lamination in the interior of $S$, each of which can be approximated by a sequence in
$\mathbb{R}_+\times \big( \mathcal{A}(S)\cup \mathcal{C}(S)\big)$, we conclude that
$$\sup_{\gamma\in \mathcal{C}(S)\cup \mathcal{A}(S)}  \frac{\ell_\gamma(Y)}{\ell_\gamma(X)}= \sup_{\mu\in \mathcal{ML}(S)}  \frac{\ell_\mu(Y)}{\ell_\mu(X)}.$$
\end{proof}

Denote by $\mathcal{B}$ the set of all boundary components of $S$. In the paper \cite{LPST},
the following was shown:
\begin{proposition}\label{prop:equality}
$$\sup_{\gamma\in \mathcal{C}(S)\cup \mathcal{A}(S)}  \frac{\ell_\gamma(Y)}{\ell_\gamma(X)}=\sup_{\gamma\in \mathcal{B}(S)\cup \mathcal{A}(S)}  \frac{\ell_\gamma(Y)}{\ell_\gamma(X)}\geq 1,$$
and the last inequality becomes an equality if and only if $X=Y$.
\end{proposition}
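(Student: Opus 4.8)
The plan is to establish the chain of equalities
$$\sup_{\gamma\in\mathcal{B}(S)\cup\mathcal{A}(S)}\frac{\ell_\gamma(Y)}{\ell_\gamma(X)}=\sup_{\gamma\in\mathcal{C}(S)\cup\mathcal{A}(S)}\frac{\ell_\gamma(Y)}{\ell_\gamma(X)}=\sup_{\mu\in\mathcal{ML}(S)}\frac{\ell_\mu(Y)}{\ell_\mu(X)}=e^{d_{\mathrm{Th}}(X^d,Y^d)},$$
where $d_{\mathrm{Th}}$ denotes Thurston's metric on $\mathcal{T}(S^d)$, and then to read off the inequality $\ge 1$ and the equality case from the fact that $d_{\mathrm{Th}}$ is a metric. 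The middle equality is Proposition~\ref{pro:maximum}. The inequality $\le$ in the left-hand equality is immediate since $\mathcal{B}(S)\subseteq\mathcal{C}(S)$; for the reverse inequality I will use a Dehn-twist approximation argument on $S$, and for the right-hand equality I will use the double $S^d$, on which the full strength of Thurston's theory is available.

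For the left-hand equality, recall from the proof of Proposition~\ref{pro:maximum} that $\sup_{\mu\in\mathcal{ML}(S)}\ell_\mu(Y)/\ell_\mu(X)$ is attained at some $\mu_0$, and that then each minimal component $\nu$ of $\mu_0$ attains it as well. By the structure of laminations on $S$, such a $\nu$ is a boundary component, an essential arc, an interior essential simple closed geodesic, or a minimal lamination filling a subsurface of $\mathrm{int}(S)$; in the first two cases $\nu$, forgetting its weight, already lies in $\mathcal{B}(S)\cup\mathcal{A}(S)$. For an interior essential simple closed curve $\gamma$ I claim there is an essential arc $\alpha\in\mathcal{A}(S)$ with $i(\alpha,\gamma)\ge 1$. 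Granting this, and letting $T_\gamma$ be the Dehn twist along $\gamma$ (an infinite-order mapping class since $\gamma$ is interior), one has $\tfrac1n T_\gamma^n\alpha\to i(\alpha,\gamma)\,\gamma$ in $\mathcal{ML}(S)$, so by continuity and homogeneity of the length functions on $\mathcal{ML}(S)$,
$$\frac{\ell_{T_\gamma^n\alpha}(Y)}{\ell_{T_\gamma^n\alpha}(X)}\ \longrightarrow\ \frac{\ell_\gamma(Y)}{\ell_\gamma(X)},$$
while each $T_\gamma^n\alpha$ is again an essential arc; hence the contribution of $\gamma$ is approximated by arcs. A minimal interior lamination is approximated by interior simple closed geodesics (Thurston's theory, as already used in the proof of Proposition~\ref{pro:maximum}), each of which is in turn approximated by arcs, and a diagonal argument finishes the reduction. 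This gives $\sup_{\mathcal{B}(S)\cup\mathcal{A}(S)}\ell_\gamma(Y)/\ell_\gamma(X)\ge\sup_{\mathcal{ML}(S)}\ell_\mu(Y)/\ell_\mu(X)$, which together with Proposition~\ref{pro:maximum} yields the left-hand equality. \emph{The main obstacle is the claimed topological lemma}, which I would prove by cases: if $\gamma$ separates $S$ and both sides carry a component of $\partial S$, take an arc joining them; if one side carries no component of $\partial S$, that side is, because $\gamma$ is essential, neither a disk nor a once-punctured disk, hence contains an essential arc with endpoints on $\gamma$, which one extends across $\gamma$ to a properly embedded arc of $S$; if $\gamma$ is non-separating, one first produces a simple closed curve meeting $\gamma$ once inside a one-holed-torus neighbourhood of $\gamma$ together with a dual curve, and then turns it into a properly embedded arc by joining it to $\partial S$. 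In every case, homotopy-invariance of the geometric intersection number, together with the fact that any path in $\partial S$ is disjoint from the interior curve $\gamma$, shows that the arc produced is essential and genuinely crosses $\gamma$.

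For the right-hand equality, I pass to the double. For every $\mu\in\mathcal{ML}(S)$ and $Z\in\mathcal{T}(S)$ one has $\ell_{\mu^d}(Z^d)=2\,\ell_\mu(Z)$ (this is the normalization implicit in the map $\mathcal{ML}_1(S)\to\mathcal{ML}_2(S^d)$ in the proof of Proposition~\ref{pro:maximum}, and it also holds for weighted boundary components, where $\mu^d=(\alpha,2a)$). Combined with Lemma~\ref{lem:sym}, this gives
$$\sup_{\mu\in\mathcal{ML}(S)}\frac{\ell_\mu(Y)}{\ell_\mu(X)}=\sup_{\nu\in\mathcal{ML}^{sym}(S^d)}\frac{\ell_\nu(Y^d)}{\ell_\nu(X^d)}.$$
It then remains to see that this symmetric supremum equals the full supremum $\sup_{\nu\in\mathcal{ML}(S^d)}\ell_\nu(Y^d)/\ell_\nu(X^d)=e^{d_{\mathrm{Th}}(X^d,Y^d)}$; the inequality $\le$ is clear. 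For $\ge$, since the involution $J$ is an isometry of $S^d$ fixing $X^d$ and $Y^d$, naturality shows that the geodesic lamination supporting all measured laminations realizing $e^{d_{\mathrm{Th}}(X^d,Y^d)}$ (Thurston's maximal-stretch lamination of the pair $(X^d,Y^d)$) is $J$-invariant, and averaging a realizing transverse measure with its $J$-image — which creates no transversality problem since both are carried by that one lamination — produces a $J$-invariant, hence symmetric, measured lamination realizing $e^{d_{\mathrm{Th}}(X^d,Y^d)}$. Finally, $d_{\mathrm{Th}}$ is an (asymmetric) metric on $\mathcal{T}(S^d)$, so $d_{\mathrm{Th}}(X^d,Y^d)\ge 0$ with equality if and only if $X^d=Y^d$, and $X^d=Y^d$ if and only if $X=Y$ because $\Psi$ is an embedding. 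Combining the three equalities with this last statement gives the proposition.
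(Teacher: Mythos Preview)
The paper does not prove this proposition at all; it is simply quoted from \cite{LPST}. So there is no in-paper argument to compare against, and what you have written is an independent proof.

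Your strategy is sound and yields a correct argument. The right-hand equality is the most substantial step, and your treatment via the double is clean: since $J$ fixes both $X^d$ and $Y^d$ in $\mathcal{T}(S^d)$, Thurston's maximally-stretched lamination $\Lambda(X^d,Y^d)$ is $J$-invariant by its uniqueness, so for any transverse measure $\nu$ realizing the supremum, $J_*\nu$ is again supported on $\Lambda$ and $\tfrac12(\nu+J_*\nu)$ is a symmetric realizer. Combined with Proposition~\ref{pro:maximum} and the injectivity of $\Psi$, this gives both the inequality $\ge 1$ and the equality case. The left-hand equality via Dehn-twist approximation of interior curves by arcs is also correct in principle; the convergence $\tfrac1n T_\gamma^n\alpha\to i(\alpha,\gamma)\gamma$ in $\mathcal{ML}(S)$ for an arc $\alpha$ follows from the usual intersection-number formula for Dehn twists, or, if you prefer, by doubling and invoking the closed-surface statement together with the continuity of $\psi$ in Lemma~\ref{lem:sym}.

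Two places deserve tightening. In the topological lemma, your non-separating case is garbled: a regular neighbourhood of $\gamma$ is an annulus, not a one-holed torus, and ``turning a closed curve into an arc by joining it to $\partial S$'' is not a construction. The clean statement is that $S\setminus\gamma$ is connected, so one can draw a simple properly embedded arc from any boundary component back to itself that crosses $\gamma$ exactly once; such an arc is automatically essential. In the separating case where one side $S_1$ carries no component of $\partial S$, you should say explicitly why the arc you build has positive geometric intersection with $\gamma$: since the sub-arc in $S_1$ is essential rel $\gamma$, the bigon criterion inside $S_1$ forbids removing those two crossings, so $i(\alpha,\gamma)=2$. With these small repairs your proof is complete.
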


\subsection{Thurston's compactification}

We need to recall some fundamental
results of Thurston described in \cite{FLP}.

Let $R$ be a surface
 of genus $g$ with $n$ punctures. Let $\mathbb{R}_+^{\mathcal{C}(R)}$ be the set of all nonnegative functions on $\mathcal{C}(R)$ and $P\mathbb{R}_+^{\mathcal{C}(R)}$ the projective space of
$\mathbb{R}_+^{\mathcal{C}(R)}$ (that is, its quotient by the action of positive reals). We denote by $\pi: \mathbb{R}_+^{\mathcal{C}(R)} \to P\mathbb{R}_+^{\mathcal{C}(R)}$
the natural projection. We endow $\mathbb{R}_+^{\mathcal{C}(R)}$ with the product topology and $P\mathbb{R}_+^{\mathcal{C}(R)}$ with the quotient  topology.
There is a mapping $L$ defined by

\begin{eqnarray*}
L: \mathcal{T}(R) &\to& \mathbb{R}_+^{\mathcal{C}(R)},\\
X&\to& (\ell_\alpha(X))_{\alpha\in \mathcal{C}(R)}.
\end{eqnarray*}
The map
$\pi \circ L: \mathcal{T}(R) \to P\mathbb{R}_+^{\mathcal{C}(R)}$ is an embedding.

There is also a mapping $I$, defined by
\begin{eqnarray*}
I: \mathcal{ML}(R) &\to& \mathbb{R}_+^{\mathcal{C}(R)},\\
\mu &\mapsto& (i(\mu,\alpha))_{\alpha\in \mathcal{C}(R)},
\end{eqnarray*}
where $$i(\mu,\alpha)=\inf_{\alpha'\in [\alpha]}\int_{\alpha'} d\mu$$
is the intersection number. Then $I$ is also an embedding.

Thurston showed that
the closure of
$\pi \circ L(\mathcal{T}(R))$ is compact and coincides with $$\pi \circ L(\mathcal{T}(R)) \ \cup \  \pi \circ I (\mathcal{ML}(R)).$$
We denote this closure by $\overline{\mathcal{T}(R)}$. This is \emph{Thurston's compactification}
of $\mathcal{T}(R)$.
In the following,
we shall identify $\mathcal{T}(R)$ with its image and the boundary of
$ \overline{\mathcal{T}(R)}$ with $\mathcal{PML}(R)$, the space of projective classes of measured laminations
on $R$.

Now we introduce an analogue of Thurston's compactification for the Teichm\"uller space $\mathcal{T}(S)$,
where $S$ is a surface with boundary.
 For simplicity, let $\mathcal{C}=\mathcal{C}(S)$ and $\mathcal{A}=\mathcal{A}(S)$.

 Consider the map defines by the following composition:
\begin{eqnarray}\label{equ:map1}
\mathcal{T}(S) \stackrel{L}{\longrightarrow} \mathbb{R}_+^{\mathcal{C}\cup \mathcal{A}}
\stackrel{\pi}{\longrightarrow}  P\mathbb{R}_+^{\mathcal{C}\cup \mathcal{A}}.
\end{eqnarray}
\begin{lemma}\label{lem:map1}
The map defined in $(\ref{equ:map1})$ is injective.
\end{lemma}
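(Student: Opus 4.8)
The plan is to reduce injectivity of the composition $\pi \circ L$ to the fact, recalled in Proposition \ref{prop:equality}, that the symmetrized arc-length ratio separates points of $\mathcal{T}(S)$. Suppose $X, Y \in \mathcal{T}(S)$ are mapped to the same point of $P\mathbb{R}_+^{\mathcal{C}\cup\mathcal{A}}$. Then there is a constant $c > 0$ such that $\ell_\gamma(Y) = c\, \ell_\gamma(X)$ for every $\gamma \in \mathcal{C}\cup\mathcal{A}$. The goal is to conclude $c = 1$ and $X = Y$.

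First I would observe that the relation $\ell_\gamma(Y) = c\,\ell_\gamma(X)$ for all $\gamma \in \mathcal{C}\cup\mathcal{A}$ forces
\[
\sup_{\gamma\in \mathcal{C}(S)\cup \mathcal{A}(S)}  \frac{\ell_\gamma(Y)}{\ell_\gamma(X)} = c
\qquad\text{and}\qquad
\sup_{\gamma\in \mathcal{C}(S)\cup \mathcal{A}(S)}  \frac{\ell_\gamma(X)}{\ell_\gamma(Y)} = \frac{1}{c}.
\]
By Proposition \ref{prop:equality} both of these suprema are $\geq 1$, so $c \geq 1$ and $1/c \geq 1$, whence $c = 1$. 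Thus $\ell_\gamma(X) = \ell_\gamma(Y)$ for every $\gamma \in \mathcal{C}\cup\mathcal{A}$, and in particular the two symmetrized ratios equal $1$. Applying the equality case of Proposition \ref{prop:equality} then gives $X = Y$, which is exactly injectivity. (Alternatively, once $c=1$ one may invoke directly that the length functions of simple closed curves and arcs — indeed already those of $\mathcal{B}(S)\cup\mathcal{A}(S)$ — are coordinates on $\mathcal{T}(S)$, which is the content of the classical fact that Teichm\"uller space embeds in $\mathbb{R}_+^{\mathcal{C}\cup\mathcal{A}}$ via $L$; this is stated implicitly before the lemma.)

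I expect the only genuine point requiring care — the ``main obstacle'' — is checking that $\pi\circ L(X) = \pi\circ L(Y)$ really does yield a single proportionality constant $c$ valid simultaneously for \emph{all} $\gamma$: this is immediate from the definition of the projective quotient, since two nonzero, everywhere-positive functions on $\mathcal{C}\cup\mathcal{A}$ lie on the same ray precisely when one is a positive multiple of the other, and positivity of all $\ell_\gamma$ is guaranteed because every essential curve and every essential arc has positive geodesic length in any hyperbolic structure. Everything else is a direct appeal to Proposition \ref{prop:equality}. No use of the doubling map $\Psi$ is needed here beyond what already went into proving that proposition in \cite{LPST}.
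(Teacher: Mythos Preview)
Your proof is correct and follows essentially the same approach as the paper: both assume a proportionality constant and invoke Proposition \ref{prop:equality} to force $X=Y$. The only cosmetic difference is that the paper assumes without loss of generality that the constant is $\geq 1$ and applies Proposition \ref{prop:equality} once, whereas you apply it in both directions to first pin down $c=1$.
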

\begin{proof}
Suppose that $X, Y\in \mathcal{T}(S)$ are mapped to the same point in $P\mathbb{R}_+^{\mathcal{C}\cup \mathcal{A}}$.
Then there exists a constant $K>0$ such that
$$\ell_\gamma(X)=K\ell_\gamma(Y)$$
for all $\gamma\in \mathcal{C}\cup \mathcal{A}$.
Without loss of generality, we may assume that $K\geq 1$.
This implies that

$$\sup_{\gamma\in \mathcal{C}(S)\cup \mathcal{A}(S)}  \frac{\ell_\gamma(Y)}{\ell_\gamma(X)}\leq 1.$$
It follows from Proposition \ref{prop:equality} that $X=Y$.
\end{proof}

Similarly, we consider
\begin{eqnarray}\label{equ:map2}
\mathcal{ML}(S) \stackrel{I}{\longrightarrow} \mathbb{R}_+^{\mathcal{C}\cup \mathcal{A}}.
\end{eqnarray}

\begin{lemma}\label{lem:map2}
The map defined in $(\ref{equ:map2})$ is injective.
\end{lemma}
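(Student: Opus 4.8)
The plan is to show that the map $I:\mathcal{ML}(S)\to \mathbb{R}_+^{\mathcal{C}\cup\mathcal{A}}$ separates distinct measured laminations, using the analogous injectivity statement for closed surfaces together with the doubling construction $\psi:\mathcal{ML}(S)\to\mathcal{ML}(S^d)$. First I would recall that for the closed (or punctured) surface $S^d$ the classical result of Thurston says that the map $I^d:\mathcal{ML}(S^d)\to\mathbb{R}_+^{\mathcal{C}(S^d)}$ given by intersection numbers with essential simple closed curves is injective; this is already invoked in the excerpt. The key point is then to relate intersection numbers on $S^d$ with intersection numbers on $S$ of the measured laminations before doubling.

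The core computation is the identity relating $i(\mu^d,\gamma^d)$ on $S^d$ to $i(\mu,\gamma)$ on $S$, where $\gamma\in\mathcal{C}\cup\mathcal{A}$ and $\gamma^d$ is its double. Concretely, for a simple closed curve $\gamma$ in the interior of $S$ one has $\gamma^d=\gamma\cup\bar\gamma$ and $i(\mu^d,\gamma\cup\bar\gamma)=2\,i(\mu,\gamma)$; for an essential arc $\gamma$ in $S$ with endpoints on $\partial S$, the double $\gamma^d=\gamma\cup\bar\gamma$ is a simple closed curve in $S^d$ meeting $\partial S$ transversally, and $i(\mu^d,\gamma^d)=2\,i(\mu,\gamma)$ as well (each transverse intersection of $\mu$ with $\gamma$ is reflected, and the measures add). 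The verification here is essentially the same additivity argument as in Lemma \ref{lem:sym}, using that $\mu^d=\mu+\bar\mu$ restricted to either half reproduces $\mu$. I would also note that a rational measured lamination $\sum a_i\gamma_i$ on $S$ doubles to $\sum a_i \gamma_i^d$ (up to the factor $2$ on boundary components already recorded after the definition of $\psi$), so the relation extends by density (Lemma \ref{lem:dense}) and continuity of $\psi$ to all of $\mathcal{ML}(S)$.

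Granting this, the injectivity of $I$ follows formally: suppose $\mu,\nu\in\mathcal{ML}(S)$ have $i(\mu,\gamma)=i(\nu,\gamma)$ for every $\gamma\in\mathcal{C}\cup\mathcal{A}$. By the doubling identity, $i(\mu^d,\gamma^d)=i(\nu^d,\gamma^d)$ for every $\gamma\in\mathcal{C}\cup\mathcal{A}$. The curves $\gamma^d$ arising this way, together with the boundary components of $S$ viewed inside $S^d$, form a family of essential simple closed curves on $S^d$ that is large enough to separate symmetric measured laminations — indeed, any $J$-invariant lamination on $S^d$ is determined by its restriction to $S$ by Lemma \ref{lem:sym}, and that restriction is in turn determined by its intersection numbers with $\mathcal{C}\cup\mathcal{A}$ once one knows the closed-surface statement restricted to the symmetric locus. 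Hence $\mu^d=\nu^d$ in $\mathcal{ML}(S^d)$, and applying the identification $\mathcal{ML}^{sym}(S^d)=\psi(\mathcal{ML}(S))$ (Lemma \ref{lem:sym}), injectivity of $\psi$ gives $\mu=\nu$.

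The main obstacle I expect is making the doubling identity $i(\mu^d,\gamma^d)=2\,i(\mu,\gamma)$ fully rigorous for essential \emph{arcs}, because the double of an arc meets the curves $\{\beta_j\}$ transversally and one must check that the geodesic (perpendicular) representative of the arc in $S$ really glues to the geodesic representative of $\gamma^d$ in $S^d$ — this is exactly where the right-angle condition in the definition of geodesic laminations on $S$ is used. A clean way around this is to argue at the level of intersection numbers directly (infimum of transverse intersections over the homotopy class), rather than through geodesic representatives, so that the reflection symmetry of $S^d$ makes the factor-of-two bookkeeping transparent; one then only needs that an arc's homotopy class in $S$ rel $\partial S$ doubles to a well-defined free homotopy class of essential simple closed curve in $S^d$, which is standard. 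With that in hand the rest is the formal separation argument above.
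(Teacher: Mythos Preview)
Your doubling strategy is different from the paper's argument, and the step where you pass from intersection numbers on $S$ to separation on $S^d$ has a genuine gap. You assert that the family $\{\gamma^d : \gamma\in\mathcal{C}\cup\mathcal{A}\}$, together with the boundary curves, is ``large enough to separate symmetric measured laminations,'' and you justify this by saying that a $J$-invariant lamination is determined by its restriction to $S$, which ``is in turn determined by its intersection numbers with $\mathcal{C}\cup\mathcal{A}$ once one knows the closed-surface statement restricted to the symmetric locus.'' That last clause is exactly the statement of the lemma you are proving. The closed-surface injectivity only tells you that $\mu^d$ is determined by $i(\mu^d,\delta)$ for \emph{all} $\delta\in\mathcal{C}(S^d)$; a generic $\delta$ crosses $\partial S$ many times and is not of the form $\gamma^d$ for a single $\gamma\in\mathcal{C}\cup\mathcal{A}$, nor is $\delta\cup\bar\delta$ in general an embedded curve system to which Lemma~\ref{lem:sym} applies. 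To close the argument you would have to show that $i(\mu^d,\delta)$ can be recovered from the numbers $\{i(\mu,\gamma)\}_{\gamma\in\mathcal{C}\cup\mathcal{A}}$, for instance by cutting the geodesic representative of $\delta$ on a symmetric hyperbolic structure along $\partial S$ into perpendicular arcs and tracking the boundary weights; this is believable but it is real work, not a consequence of the closed-surface theorem alone.

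The paper bypasses doubling entirely. It writes $\mu=\mu_0+\mu_1$ and $\nu=\nu_0+\nu_1$, where $\mu_1,\nu_1$ collect the components that are arcs or boundary curves and $\mu_0,\nu_0$ the components supported in the interior of $S$. Since $\mu_1,\nu_1$ are finite weighted systems of arcs and boundary components, an elementary choice of test elements in $\mathcal{C}\cup\mathcal{A}$ forces $\mu_1=\nu_1$. The remaining parts $\mu_0,\nu_0$ live in the interior, so the classical argument for surfaces without boundary (using only $\mathcal{C}$) gives $\mu_0=\nu_0$. This decomposition into ``boundary part'' and ``interior part'' is shorter and avoids the bookkeeping your approach would require.
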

\begin{proof}
Suppose that $\mu,\nu\in \mathcal{ML}(S)$ are mapped to the same point in $\mathbb{R}_+^{\mathcal{C}\cup \mathcal{A}}$.
Let
$$\mu=\mu_0+\mu_1, \nu=\nu_0+\nu_1,$$
where $\mu_0,\nu_0$ are unions of components contained in the interior of $S$
and $\mu_1,\nu_1$ are unions of components that belong to $\partial S$ or intersect $\partial S$ (some of these components might be empty).

Since $\mu_1$ and $\nu_1$ (if they exist) are unions of  simple geodesic arcs or boundary components of
$S$, it is easy to see that $\mu_1=\nu_1$. For otherwise, it is not hard to find  some element $\gamma$ in
$\mathcal{C}\cup \mathcal{A}$ such that $i(\mu_1,\gamma)\neq i(\nu_1,\gamma)$.

On the other hand, since $\mu_0$ and $\nu_0$ are contained in the interior of the surface, by the same argument as
for a surface without boundary which may have punctures, we have $\mu_0=\nu_0$.

 It follows that $\mu=\nu$.
\end{proof}

\begin{remark}\label{remark:disjoint}
Both Lemmas \ref{lem:map1} and \ref{lem:map2} can be proved directly by the same arguments as
\cite[Expos\'es 6 and 7]{FLP}. Note that the images of $\mathcal{T}(S)$ and $\mathcal{ML}(S)$
in $\mathbb{R}_+^{\mathcal{C}\cup \mathcal{A}}$ are disjoint. This follows from the fact that for
each $X\in \mathcal{T}(S)$, the set of lengths $\ell_\gamma(X),\gamma\in \mathcal{C}\cup \mathcal{A}$
is bounded below by a strictly positive constant (only depending on $X$); while for each $\mu\in \mathcal{ML}(S)$
and for any $\epsilon>0$, there is some $\gamma\in \mathcal{C}\cup \mathcal{A}$ such that
$$i(\mu,\gamma)<\epsilon.$$
Here $\gamma$ can be taken as a simple closed curve, a simple arc belonging to a component of $\mu$ (if it exists)
or a simple closed curve quasi-transverse to $\mu$ (see
\cite[Proposition 8.1]{FLP} for details).
\end{remark}

By Lemma \ref{lem:map1}, Lemma \ref{lem:map2} and Remark \ref{remark:disjoint}, we have an embedding

$$\mathcal{T}(S)\cup \mathcal{PML}(S)\to P\mathbb{R}_+^{\mathcal{C}\cup \mathcal{A}}.$$

We have already identified $\mathcal{T}(S)$ with the subset $\mathcal{T}^{sym}(S^d)$ of $\mathcal{T}(S^d)$
by the map $\Psi$ and $\mathcal{PML}(S)$ with the subset $\mathcal{PML}^{sym}(S^d)$ of $\mathcal{PML}(S^d)$ by the map $\psi$.
To give an idea of the image of $\mathcal{T}(S)\cup \mathcal{PML}(S)$ in $P\mathbb{R}_+^{\mathcal{C}\cup \mathcal{A}}$,
we shall show that the convergence of sequences in $\mathcal{T}(S)$ in the topology of $P\mathbb{R}_+^{\mathcal{C}\cup \mathcal{A}}$
is equivalent to the convergence in the topology of $P\mathbb{R}_+^{\mathcal{C}(S^d)}$.

 Let $\{X_n\}$ be a sequence in $\mathcal{T}(S)$  and let $\{X_n^d\}$ be the corresponding sequence in $\mathcal{T}^{sym}(S^d)$.

Assume that $X_n^d$ converges to a point $\widetilde{\mu} \in \mathcal{PML}(S^d)$ in the topology of $P\mathbb{R}_+^{\mathcal{C}(S^d)}$. Now an element
 of $\mathcal{T}(S^d)$ or $\mathcal{PML}(S^d)$ in $P\mathbb{R}_+^{\mathcal{C}(S^d)}$ is in $\mathcal{T}^{sym}(S^d)$ or $\mathcal{ML}^{sym}(S^d)$ if and only if as a function on the set of homotopy classes of curves $\mathcal{C}(S^d)$ it has the same values on pairs of curves that are images of each other by the involution $J$ of $S^d$. Thus, since $X^d_n$ is symmetric, $\widetilde{\mu}$ is also symmetric. It follows that $X_n$ converges to $\mu$ (which satisfies $\widetilde{\mu}=\mu^d$)
in the topology of $P\mathbb{R}_+^{\mathcal{C}\cup \mathcal{A}}$.

Conversely, assume that $X_n$ converges to a point $P$ in $P\mathbb{R}_+^{\mathcal{C}(S)\cup \mathcal{A}(S)}$.
Let $\widetilde{\mu}$ be any accumulation point of $X_n^d$ in $\mathcal{PML}^{sym}(S^d)$. By definition,
there exists a sequence $c_n>0$ such that (up to a subsequence)
$$c_n\ell_\gamma(X_n^d)\to i(\widetilde{\mu},\gamma)$$
 for any $\gamma\in \mathcal{C}(S^d)$. Setting $\bar\gamma=J(\gamma)$, we have
$$c_n\ell_{\bar\gamma}(X_n^d)=c_n\ell_\gamma(X_n^d)\to i(\mu,\gamma)=i(\mu,\bar\gamma).$$
In particular, we have
$$i(\widetilde{\mu},\gamma)=i(\widetilde{\mu},\bar\gamma)$$
for any $\gamma\in \mathcal{C}(S^d)$. Such a $\widetilde{\mu}$ must be symmetric and unique (the restriction of
$\widetilde{\mu}$ on $S$ is identified with $P$).

In conclusion, we have
\begin{theorem}\label{thm:double}
$\mathcal{PML}(S)$ is identified with the boundary of $\mathcal{T}(S)$ in $P\mathbb{R}_+^{\mathcal{C}\cup \mathcal{A}}$.
The embedding $\Psi: \mathcal{T}(S)\to \mathcal{T}(S^d)$ extends to
$\mathcal{T}(S)\cup \mathcal{PML}(S)$ such that $\Psi |_{\mathcal{PML}(S)}=\psi$.

\end{theorem}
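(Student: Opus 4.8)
The plan is to deduce Theorem~\ref{thm:double} from Thurston's compactification of $\mathcal{T}(S^d)$, transported through the doubling maps $\Psi$ and $\psi$ and the symmetry under the involution $J$. The starting point is the comparison of lengths and intersection numbers on $S$ and on $S^d$: for an essential simple closed curve $\gamma$ of $S$ (possibly a boundary component) one has $\ell_\gamma(\Psi(X))=\ell_\gamma(X)$ and $i(\mu^d,\gamma)=i(\mu,\gamma)$, since $S$ sits in $S^d$ as a totally geodesic half and, after pushing a boundary curve slightly inside, its intersection with a lamination counts exactly the endpoints of arc components lying on it; while for an essential arc $\alpha$, writing $\alpha^d=\alpha\cup\bar\alpha\in\mathcal{C}(S^d)$, the orthogeodesic representative of $\alpha$ doubles to the closed geodesic representing $\alpha^d$, so that $\ell_{\alpha^d}(\Psi(X))=2\ell_\alpha(X)$ and $i(\mu^d,\alpha^d)=2\,i(\mu,\alpha)$. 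Equivalently, the continuous linear ``restriction'' map $r\colon \mathbb{R}_+^{\mathcal{C}(S^d)}\to\mathbb{R}_+^{\mathcal{C}\cup\mathcal{A}}$ given by $r(f)(\gamma)=f(\gamma)$ for $\gamma\in\mathcal{C}$ and $r(f)(\alpha)=\tfrac12 f(\alpha^d)$ for $\alpha\in\mathcal{A}$ satisfies $r\circ L\circ\Psi=L$ on $\mathcal{T}(S)$ and $r\circ I\circ\psi=I$ on $\mathcal{ML}(S)$ (here $L,I$ on the left are those of $S^d$). Since $L(X)$ has positive coordinates and $I(\mu)\neq 0$ for $\mu\neq 0$ (Lemma~\ref{lem:map2}), $r$ descends to a continuous map $\bar r$ on the symmetric part of $\overline{\mathcal{T}(S^d)}$, intertwining $\pi\circ L\circ\Psi$ with $\pi\circ L$ and $\pi\circ I\circ\psi$ with $\pi\circ I$.

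Next I would show that the symmetric locus is a compact model for $\mathcal{T}(S)\cup\mathcal{PML}(S)$. The involution $J$ acts on $P\mathbb{R}_+^{\mathcal{C}(S^d)}$ by the coordinate permutation $\delta\mapsto\bar\delta$, which is a homeomorphism, and $F:=\mathcal{T}^{sym}(S^d)\cup\mathcal{PML}^{sym}(S^d)$ is exactly $\overline{\mathcal{T}(S^d)}\cap\mathrm{Fix}(J)$; being closed in the compact space $\overline{\mathcal{T}(S^d)}$, it is compact. On $F$ the map $\bar r$ is injective: a point of $\mathcal{T}^{sym}(S^d)$ is $\Psi(X)$ for a unique $X$, and $\bar r(\Psi(X))=(\pi\circ L)(X)$ determines $X$ by Lemma~\ref{lem:map1}; a point of $\mathcal{PML}^{sym}(S^d)$ is $\psi(\mu)$ for a unique $\mu$ by Lemma~\ref{lem:sym}, and $\bar r(\psi(\mu))=(\pi\circ I)(\mu)$ determines $\mu$ by Lemma~\ref{lem:map2}; and the two images are disjoint by Remark~\ref{remark:disjoint}. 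A continuous injection of a compact space into a Hausdorff space is a homeomorphism onto its image, so $\bar r|_F$ is a homeomorphism of $F$ onto $K:=(\pi\circ L)(\mathcal{T}(S))\cup(\pi\circ I)(\mathcal{ML}(S))$, which is exactly the image of the embedding $\mathcal{T}(S)\cup\mathcal{PML}(S)\hookrightarrow P\mathbb{R}_+^{\mathcal{C}\cup\mathcal{A}}$ constructed just before the theorem; in particular $K$ is compact, and $(\pi\circ I)(\mathcal{ML}(S))$ is closed in it.

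It then remains to check that $K$ is the \emph{closure} of $\mathcal{T}(S)$ in $P\mathbb{R}_+^{\mathcal{C}\cup\mathcal{A}}$. One inclusion is immediate: $K$ is compact, hence closed, and contains $\mathcal{T}(S)$. For the reverse, I would use the path $X_t$ produced by Lemma~\ref{lemma:key}: for every $\gamma\in\mathcal{C}\cup\mathcal{A}$ the uniform lower bound and the $\gamma$-dependent upper bound both give $e^{-t}\ell_\gamma(X_t)\to i(\mu,\gamma)$ as $t\to\infty$, so $(\pi\circ L)(X_t)\to(\pi\circ I)(\mu)$ in $P\mathbb{R}_+^{\mathcal{C}\cup\mathcal{A}}$ for every $\mu$, since $i(\mu,\cdot)\neq 0$. (Alternatively, one first treats rational measured laminations by explicit twist and boundary-lengthening deformations and then invokes the density of rational laminations, Lemma~\ref{lem:dense}, together with the closedness of $(\pi\circ I)(\mathcal{ML}(S))$.) Thus $\overline{\mathcal{T}(S)}=K$, the subset $\mathcal{T}(S)$ is open in $K$ (its complement $(\pi\circ I)(\mathcal{ML}(S))$ being closed), and the boundary of $\mathcal{T}(S)$ in $P\mathbb{R}_+^{\mathcal{C}\cup\mathcal{A}}$ is identified with $\mathcal{PML}(S)$. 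Finally, the homeomorphism $\bar\Psi:=(\bar r|_F)^{-1}\colon \mathcal{T}(S)\cup\mathcal{PML}(S)\to F\subset\mathcal{T}(S^d)\cup\mathcal{PML}(S^d)$ restricts, by the identities of the first step, to $\Psi$ on $\mathcal{T}(S)$ and to $\psi$ on $\mathcal{PML}(S)$; this is the asserted extension.

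The main obstacle is precisely the reverse inclusion above. The soft part of the argument --- that the fixed locus of the finite-order map $J$ is closed in $\overline{\mathcal{T}(S^d)}$ --- gives for free only that $\overline{\mathcal{T}(S)}$ is \emph{contained} in $\mathcal{T}(S)\cup\mathcal{PML}(S)$; showing that every point of the symmetric measured-lamination sphere is actually approached by genuine hyperbolic structures on $S$ has no purely formal shortcut and requires the explicit degenerating families of Lemma~\ref{lemma:key} (or, for rational laminations, hands-on twist and boundary-stretch constructions). Everything else is bookkeeping built on the lemmas of this section.
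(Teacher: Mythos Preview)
Your argument is correct and, for the containment direction, parallels the paper's discussion preceding the theorem: both transport Thurston's compactification of $\mathcal{T}(S^d)$ through the doubling maps and the $J$-symmetry. Your packaging via the linear restriction map $r$ and the compact-to-Hausdorff injection lemma is tidier than the paper's sequential convergence argument, but the underlying idea is the same.

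Where you go beyond the paper is the reverse inclusion --- that every $[\mu]\in\mathcal{PML}(S)$ actually lies in the closure of $\mathcal{T}(S)$. The paper's two paragraphs only show that limits of divergent sequences $X_n$ (equivalently $X_n^d$) land in $\mathcal{PML}^{sym}(S^d)$, i.e.\ that $\overline{\mathcal{T}(S)}\subseteq\mathcal{T}(S)\cup\mathcal{PML}(S)$; the opposite inclusion is not argued there. You correctly identify this as the substantive step and fill it via Lemma~\ref{lemma:key}. This is not circular --- the proof of Lemma~\ref{lemma:key} in \S\ref{sec:inequality} makes no use of Theorem~\ref{thm:double} --- but it is a forward reference to the paper's main technical estimate and is heavier than necessary. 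The alternative you sketch (explicit degenerations for rational laminations together with the density Lemma~\ref{lem:dense} and the closedness of $(\pi\circ I)(\mathcal{ML}(S))$ in $K$) is closer in spirit to \cite{FLP} and avoids the forward reference, though carrying it out for arc components and boundary curves on $S$ needs some care since ordinary Dehn twists about $\partial S$-curves are not $J$-equivariant. Either way, your observation that this step requires genuine input rather than mere symmetry bookkeeping is exactly right; it is a point the paper glosses over.
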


\subsection{Topology of the boundary}

Let $S$ be a surface of genus $g$, with $p$ punctures and with $b$ boundary components denoted by $\{B_1, \dots, B_{b}\}$.
A  pants decomposition of $S$ contains $3g-3+b+p$ pairwise disjoint interior curves which we denote by $\{C_1, \dots, C_{3g-3+b+p}\}$,
 decomposing the surface into $2g-2+b+p$ pairs of pants.
Such a pants decomposition induces a symmetric  pants decomposition of the double $S^d$, with $6g-6+3b+2p$ curves denoted by $$\{C_1, \dots, C_{3g-3+b+p}, B_1, \dots, B_b,\bar{C}_1, \dots, \bar{C}_{3g-3+b+p}\},$$ dividing $S^d$ into $4g-4+2b+2p$ pairs of pants.

The space of measured laminations $\mathcal{ML}(S^d)$ can be understood using the Dehn-Thurston coordinates associated with a pants decomposition.

Given a measured lamination $\mu$,
for every curve $C$ in the symmetric pants decomposition of $S^d$, there are two associated coordinates, the length coordinate $i(\mu,C) \in \mathbb{R}_{\geq 0}$ and the twist coordinate $\theta(\mu,C) \in \mathbb{R}$ (see Dylan Thurston \cite{DT} for details). This gives an element $(i(\mu,C),\theta(\mu,C)) \in \mathbb{R}_{\geq 0} \times \mathbb{R}$. Consider the quotient $\mathbb{R}^{[2]} = \mathbb{R}_{\geq 0} \times \mathbb{R} / \sim $, where $(0,t) \sim (0,-t)$, and denote by $DT(\mu,C)$ the equivalent class of $(i(\mu,C),\theta(\mu,C))$ in $\mathbb{R}^{[2]}$. Notice that $\mathbb{R}^{[2]}$ is homeomorphic to $\mathbb{R}^{2}$. The Dehn-Thurston coordinates give a homeomorphism
\begin{eqnarray*}
 \mathcal{ML}(S^d) &\to& (\mathbb{R}^{[2]})^{6g-6+3b+2p}  \\
\mu &\rightarrow& ( DT(\mu,C_1), \dots, DT(\mu,C_{3g-3+b+p}),DT(\mu,B_1), \\
&& \dots, DT(\mu,B_b), DT(\mu,\bar{C}_1),\dots,DT(\mu,\bar{C}_{3g-3+b+p}) )
 \end{eqnarray*}

The subspace $\mathcal{ML}(S) \subset \mathcal{ML}(S^d)$ can be described by equations imposing symmetry on the coordinates:
$$\forall j: i(\mu,C_j) = i(\mu, \bar{C}_j)$$
$$\forall j: \theta(\mu,C_j) = -\theta(\mu, \bar{C}_j)$$
$$\forall j:   \theta(\mu, B_j) = 0 \mbox{ if }  i(\mu,B_j) \neq 0.$$
The minus sign in the equation for the twist comes from the fact that the sign of the twist parameter depends on the orientation of the surface,
and the mirror symmetry changes the orientation.

The first two equations mean that, for symmetric laminations, the coordinates associated with the curves $\bar{C}_i$ can be recovered from the coordinates associated to $C_i$, so we can neglect the curves $\bar{C}_i$ in the coordinates.

The third equation shrinks every factor $\mathbb{R}^{[2]}$ corresponding to a boundary curve $B_j$ into a line.
Then we define the coordinate $\hat{\theta}(\mu, B_j)$ as $i(\mu,B_j)$ if $i(\mu,B_j)\neq 0$, and as $-|\theta(\mu,B_j)|$ if $i(\mu,B_j) = 0$.

These considerations prove the following:

\begin{proposition}\label{pro:sphere}
The following map is a homeomorphism
$$ \mathcal{ML}(S) \ni \mu \rightarrow $$ $$(DT(\mu,C_1), \dots, DT(\mu,C_{3g-3+b+p}),\hat{\theta}(\mu,b_1), \dots, \hat{\theta}(\mu, B_b)) \in (\mathbb{R}^{[2]})^{3g-3+b+p}\times \mathbb{R}^b $$
In particular, $\mathcal{ML}(S)$ is homeomorphic to $\mathbb{R}^{6g-6+3b+2p}$, and $\mathcal{PML}(S)$ is homeomorphic to $\mathbb{S}^{6g-7+3b+2p}$.
\end{proposition}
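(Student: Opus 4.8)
The plan is to realise the displayed map as a composition of homeomorphisms, essentially all of whose pieces have been set up in the discussion preceding the statement, so that the proof amounts to assembling them and doing a dimension count.

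First I would factor the map through the double. By Lemma \ref{lem:sym} the inclusion $\psi$ identifies $\mathcal{ML}(S)$ homeomorphically with the closed subspace $\mathcal{ML}^{sym}(S^d)$ of $J$-invariant measured laminations of $S^d$ (the inverse being restriction to $S$, which is continuous for the $\mathrm{weak}^\ast$-topology). Composing with the Dehn--Thurston homeomorphism $\mathcal{ML}(S^d)\xrightarrow{\sim}(\mathbb{R}^{[2]})^{6g-6+3b+2p}$ attached to the symmetric pants decomposition $\{C_i,B_j,\bar C_i\}$ (Dylan Thurston \cite{DT}), I must identify the image of $\mathcal{ML}^{sym}(S^d)$. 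The key input is the effect of the involution $J$ on the coordinates: $J$ interchanges $C_i$ with $\bar C_i$, fixes each boundary curve $B_j$, and reverses orientation, hence negates every twist coordinate. Writing $\overline{(s,t)}=(s,-t)$ for the induced involution of $\mathbb{R}^{[2]}$ (well defined, and the identity on the subset $\{i=0\}$), invariance under $J$ is equivalent to $DT(\mu,\bar C_i)=\overline{DT(\mu,C_i)}$ for all $i$ together with $\overline{DT(\mu,B_j)}=DT(\mu,B_j)$ for all $j$, the latter being exactly the condition $\theta(\mu,B_j)=0$ whenever $i(\mu,B_j)\neq 0$. This is precisely the system of three equations recalled before the statement, so $DT$ carries $\mathcal{ML}^{sym}(S^d)$ homeomorphically onto the subset $V$ of $(\mathbb{R}^{[2]})^{6g-6+3b+2p}$ cut out by them.

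Next I would remove the redundant factors. The first two equations show that the $\bar C_i$-coordinates are determined by the $C_i$-coordinates, so the projection forgetting the $\bar C_i$-coordinates restricts to a continuous bijection from $V$ onto $(\mathbb{R}^{[2]})^{3g-3+b+p}\times W^{b}$, where $W\subset\mathbb{R}^{[2]}$ is the locus $\{[(s,0)]:s\ge 0\}\cup\{[(0,t)]:t\in\mathbb{R}\}$; its inverse fills the $\bar C_i$-coordinates back in by $DT(\mu,\bar C_i)=\overline{DT(\mu,C_i)}$ and is visibly continuous (and surjectivity onto $(\mathbb{R}^{[2]})^{3g-3+b+p}\times W^{b}$ is clear, since any such tuple, extended by this mirror formula, lies in $V$ and hence, by Lemma \ref{lem:sym}, comes from $\mathcal{ML}(S)$), so this projection is a homeomorphism. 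Finally, since the identification $(0,t)\sim(0,-t)$ collapses the boundary line of $\mathbb{R}^{[2]}$ to a single ray, $W$ is a union of two rays issuing from the origin, hence homeomorphic to $\mathbb{R}$, and the coordinate $\hat\theta(\mu,B_j)$ — equal to $i(\mu,B_j)\ge 0$ on one ray and to $-|\theta(\mu,B_j)|\le 0$ on the other — is precisely such a homeomorphism $W\xrightarrow{\sim}\mathbb{R}$. Composing the four homeomorphisms yields the asserted homeomorphism onto $(\mathbb{R}^{[2]})^{3g-3+b+p}\times\mathbb{R}^{b}$, and since $\mathbb{R}^{[2]}\cong\mathbb{R}^{2}$ the target is $\mathbb{R}^{6g-6+3b+2p}$.

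For the projective statement I would observe that scaling a measured lamination by $t>0$ multiplies every length and every twist parameter by $t$, hence multiplies each $DT$-coordinate and each $\hat\theta$-coordinate by $t$; thus the homeomorphism above is equivariant for the scaling action of $\mathbb{R}_{>0}$ on both sides, sends $0$ to $0$, and corresponds to the standard radial scaling on $\mathbb{R}^{6g-6+3b+2p}$. Passing to the quotients of the complements of $0$ then identifies $\mathcal{PML}(S)$ with $\mathbb{S}^{6g-7+3b+2p}$. I expect the only genuinely delicate step to be the bookkeeping for the action of $J$ on the Dehn--Thurston coordinates — pinning down the sign conventions for the twist parameters and the behaviour of the $\mathbb{R}^{[2]}$-quotient on components carried by $\mu$ — after which everything else is routine.
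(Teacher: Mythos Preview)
Your proposal is correct and follows essentially the same approach as the paper: the paper's proof is precisely the discussion preceding the statement (which ends with ``These considerations prove the following''), and you have reproduced and fleshed out exactly that argument---identify $\mathcal{ML}(S)$ with $\mathcal{ML}^{sym}(S^d)$, apply Dehn--Thurston coordinates for the symmetric pants decomposition, impose the three symmetry equations, drop the redundant $\bar C_i$-factors, and collapse each $B_j$-factor to a line via $\hat\theta$. Your treatment is in fact more careful than the paper's, since you spell out why each step is a homeomorphism and supply the scaling-equivariance argument for the projective statement, which the paper leaves implicit.
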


\section{Thurston's asymmetric metric}\label{sec:metric}
Given a set $M$,  a nonnegative function $d$ defined on $M \times M$ is said to be a \emph{weak
metric} if it satisfies all the axioms of a distance function except the symmetry
axiom. A weak metric $d$ is said to be asymmetric if it is strictly weak, that is, if
there exist two points $x$ and $y$ in $M$ such that $d(x, y) \neq  d(y, x)$.

In this section, we first review Thurston's metric and stretch maps on
Teichm\"uller spaces of surfaces without boundary (with or without punctures).

Denote by $\mathcal{T}_{g,n}$ the Teichm\"uller space of a surface $R$ of genus $g$ with $n$ punctures and without boundary. The space $\mathcal{T}_{g,n}$ is the space of marked hyperbolic structures on $R$.
Thurston \cite{Thurston} defined an asymmetric metric $d_{\mathrm{Th}}$ on  $\mathcal{T}_{g,n}$ by setting
\begin{equation} \label{eq:L}
d_{\mathrm{Th}}(X,Y)=\inf_{f} \log L_f(X,Y),
\end{equation}
where the infimum is taken over all homeomorphisms $f:X \to Y$ homotopic to the identity map of $R$ and where $L_f(X,Y)$ is the \emph{Lipschitz constant} of $f$, that is,
\begin{displaymath}\label{Lip}
L_f(X,Y)=\sup_{x\neq y\in S}\frac{d_{Y}\big{(}f(x),f(y)\big{)}}{d_{X}\big{(}x,y\big{)}}.
\end{displaymath}

An important result of Thurston \cite{Thurston} is that

\begin{equation} \label{equ:Thurston}d_{\mathrm{Th}}(X,Y)=\log \sup_{\gamma\in \mathcal{C}(S)}  \frac{\ell_\gamma(Y)}{\ell_\gamma(X)}.
\end{equation}

The asymmetric metric defined in \eqref{eq:L} is Finsler, that is, it is a length metric which is defined by integrating a weak (asymmetric) norm on the tangent bundle of $\mathcal{T}_{g,n}$ along paths in $\mathcal{T}_{g,n}$, and taking the infimum of lengths over all piecewise $C^1$-paths.
Thurston \cite{Thurston} also gave an explicit formula for the weak norm of a tangent vector $V$ at a point $X$ in $\mathcal{T}_{g,n}$, namely,

\begin{equation}\label{eq:norm-L}
\Vert V \Vert_{\mathrm{Th}} =\sup_{\lambda \in \mathcal{ML}} \frac{d \ell_{\lambda}(V)}{\ell_{\lambda}(X)}.
\end{equation}
Here, $\mathcal{ML}$ is the space of measured laminations on the surface, $\ell_{\lambda}:\mathcal{T}_{g,n}\to \mathbb{R}$ is the
 geodesic length function on Teichm\"uller space associated to the measured lamination $\lambda$ and $d \ell_{\lambda}$ is the differential of  $\ell_{\lambda}$ at the point $X\in \mathcal{T}_{g,n}$.

There is a (non-necessary unique) extremal Lipschitz homeomorphism that realizes the infimum in \eqref{eq:L}.
 Related to the extremal Lipschitz homeomorphsim, there is a class of geodesics for Thurston's metric called \emph{stretch lines},
 which we will describe below.

Let $X$ be again a hyperbolic surface on $R$. A geodesic lamination $\lambda$ on $X$ is said to be \emph{complete} if its complementary regions are all isometric to ideal triangles.  Associated with $(X, \lambda)$ is a measured foliation $F_{\lambda}(X)$, called the horocyclic foliation, whose equivalence class is characterized by the following three properties:
\begin{enumerate}[(i)]
\item $F_{\lambda}(X)$ intersects $\lambda$ transversely, and in each cusp of an ideal triangle in the complement of $\lambda$,
the leaves of the foliation are pieces of horocycles that make right angles with the boundary of the triangle;
\item  on the leaves of $\lambda$, the transverse measure for $F_{\lambda}(X)$ agrees with hyperbolic arc length;
\item there is a non-foliated region at the center of each ideal triangle of $X\setminus \lambda$ whose boundary consists of three pieces of horocycles that are pairwise tangent (see Figure \ref{fig:horo}).
 \end{enumerate}

\begin{figure}[htbp]

\centering

\includegraphics[width=12cm]{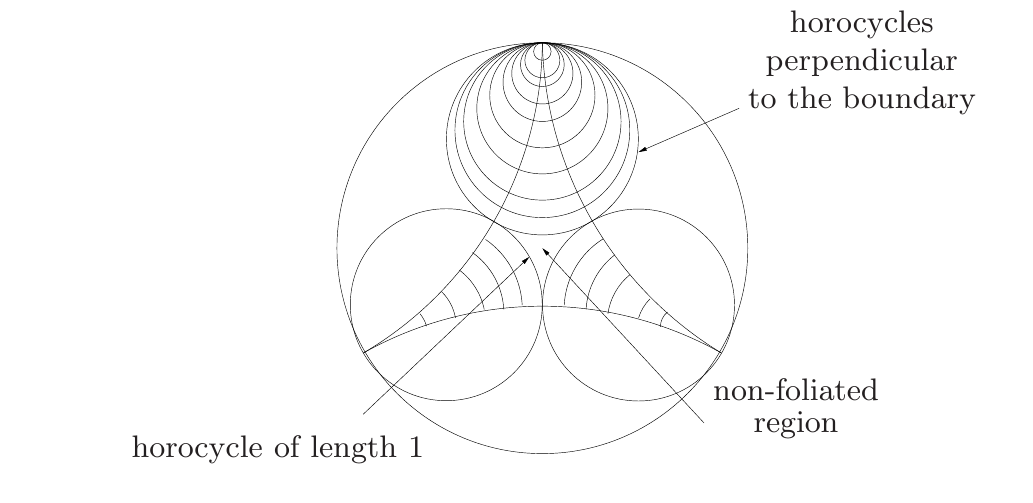}

\caption{The horocyclic foliation of an ideal triangle.}
\label{fig:horo}
\end{figure}

We denote by $\mathcal{MF}(\lambda)$ the space of measured foliations that are transverse to $\lambda$. Note that by the definition of a
horocylic foliation, we require the measured foliation in $\mathcal{MF}(\lambda)$ to be  standard in a neighborhood of any cusp of the
surface. This means that its
leaves are circles, and the transverse measure of any arc converging to the cusp is infinite. Thurston \cite{Thurston} proved the following fundamental result.
\begin{theorem}\label{th:T}
The map $\phi_\lambda:  \mathcal{T}_{g,n}\to \mathcal{MF}(\lambda)$ defined by $ X \mapsto F_\lambda(X)$ is a homeomorphism.
\end{theorem}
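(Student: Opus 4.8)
The plan is to produce an explicit inverse of $\phi_\lambda$ by a cut-and-reglue construction, and then to verify continuity in both directions (invoking invariance of domain for part of this). Fix a topological model of the pair $(R,\lambda)$. Realizing $\lambda$ as a complete geodesic lamination for some hyperbolic structure and cutting along it, one sees that every complementary region is an ideal triangle; and on a single ideal triangle the partial foliation described by conditions (i) and (iii) is unique up to isotopy. Hence a hyperbolic structure having $\lambda$ as a complete lamination is completely determined by the \emph{gluing data} telling how the ideal triangles are reattached along the leaves of $\lambda$. This data is a \emph{shear cocycle}: a finitely additive signed transverse measure on arcs transverse to $\lambda$; for a lamination with finitely many leaves it amounts to one real ``shear'' per leaf, the signed distance along the leaf between the feet of the perpendiculars dropped from the opposite ideal vertices of the two adjacent triangles.

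Given $F\in\mathcal{MF}(\lambda)$, I would define a candidate preimage by re-gluing the ideal triangles using the shear cocycle determined by the behaviour of $F$ near $\lambda$; condition (ii) pins down this cocycle, since it forces the transverse measure induced by $F$ along each leaf of $\lambda$ to be the hyperbolic arc length of the reconstructed surface, while conditions (i) and (iii) hold automatically because the foliation on each complementary triangle is declared to be the canonical one. Two points require real work: (a) the (possibly infinite) shearing gluing must be shown to converge to a \emph{complete, finite-area} hyperbolic structure --- this is where the hypothesis that $F$ is standard near the cusps enters, to make the cusp ends close up and keep the area equal to $2\pi|\chi(R)|$; and (b) the cocycle/consistency conditions must be checked so that the reassembled surface, with its marking, is well defined. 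Granting this, $\phi_\lambda$ composed with this map is the identity by construction, and in the other order it is the identity because the shear cocycle of a given $X$ relative to $\lambda$ is exactly the data recorded by $F_\lambda(X)$; in particular $\phi_\lambda$ is injective, two hyperbolic structures with equal horocyclic foliation having the same triangles and the same shears.

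For continuity I would argue directly from the construction. As $X$ varies continuously in $\mathcal{T}_{g,n}$, the geodesic realization of $\lambda$, its complementary ideal triangles, and the horocyclic arcs all vary continuously, so $F_\lambda(X)$ tested against any fixed finite family of transversals varies continuously, i.e.\ $\phi_\lambda$ is continuous. The inverse map is continuous because the reassembly of the surface depends continuously on the shear cocycle, which depends continuously on $F$ in the topology of $\mathcal{MF}(\lambda)$. Alternatively, once $\phi_\lambda$ is known to be a continuous injection and one checks, via train-track coordinates, that $\mathcal{MF}(\lambda)$ is a topological manifold of the same dimension $6g-6+2n$ as $\mathcal{T}_{g,n}$, invariance of domain shows $\phi_\lambda$ is an open embedding; surjectivity (equivalently, properness of $\phi_\lambda$) still has to come from the gluing construction or from a separate non-escaping argument.

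The step I expect to be the main obstacle is exactly item (a), the realization of the inverse: showing that an \emph{arbitrary} admissible transverse measure on $\lambda$ arises from a genuine complete finite-area hyperbolic metric requires controlling the convergence of the shearing gluing when $\lambda$ has infinitely many leaves and analysing carefully the behaviour at the cusps. A secondary delicate point is the continuity of the inverse near the locus where shears blow up, where one must use that the topology on $\mathcal{MF}(\lambda)$ is precisely set up so that such degenerations match divergence in $\mathcal{T}_{g,n}$. The remaining ingredients --- uniqueness of the horocyclic foliation on a single ideal triangle, the cocycle bookkeeping, and the two composition identities --- are routine once this core convergence/realization result is established.
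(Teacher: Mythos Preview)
The paper does not actually prove this theorem: it is stated as a fundamental result of Thurston and attributed to \cite{Thurston}, with no argument given. So there is no ``paper's own proof'' to compare against.

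That said, your outline is essentially Thurston's original approach via shearing coordinates: cut along $\lambda$, parametrize the regluing by a transverse shear cocycle, and identify this cocycle with the data carried by the horocyclic foliation. You have also correctly located the genuine difficulty, namely your item (a): showing that an arbitrary transverse cocycle (equivalently, an arbitrary $F\in\mathcal{MF}(\lambda)$) integrates to a complete finite-area hyperbolic structure when $\lambda$ has uncountably many leaves. This is the heart of Thurston's argument and is not routine; it requires the positivity/boundedness conditions that characterize which cocycles arise from actual hyperbolic metrics, together with the standard-at-cusps hypothesis. Your sketch is an honest roadmap rather than a proof, but it points at the right construction and the right obstacle.
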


The  \emph{stretch line $g_\lambda^t(X)$ directed by $\lambda$ and passing through $X\in  \mathcal{T} (R)$} is the line 
in Teichm\"uller space parameterized by
$$\mathbb{R} \ni t \mapsto g_\lambda^t(X)= \phi_\lambda^{-1}(e^t F_\lambda(X)).$$

We call a segment of a stretch line \emph{a stretch path}. We also have a natural notion of \emph{stretch ray.}

Stretch rays are geodesics for the Thurston metric: Suppose that $\lambda_0$ is a measured lamination whose support
is contained in a complete geodesic lamination $\lambda$. Let $\Gamma(t)=g_\lambda^t(X)$.
Then, for any two points $\Gamma_s, \Gamma_t, s\leq t$ on the stretch line,
The  distance $d_{\mathrm{Th}}(\Gamma_s, \Gamma_t)$ is equal to $t-s$, and this distance is realized by
$$\log  \frac{\ell_{\lambda_0} (\Gamma_t)}{\ell_{\lambda_0} (\Gamma_s)}.$$
It was observed by Thurston \cite{Thurston} that any measured  lamination that realizes the maximum of
$$\sup_{\mu \in \mathcal{ML}}  \frac{\ell_\mu(X_t)}{\ell_\mu(X_s)}$$
is supported by $\lambda$. The union of all the measured geodesic
laminations that realize this maximum is also a measured geodesic lamination, called the \emph{stump}
of $\lambda$.

Thurston proved that any two points in Teichm\"uller space can be joined by a geodesic which is a finite concatenation of stretch paths,
but in general such a geodesic is not unique. There also exist geodesics
for Thurston's metric that are not concatenations of stretch paths. Some of them are made explicit in \cite{PT}. This contrasts
with Teichm\"uller's theorem establishing the existence and uniqueness of Teichm\"uller geodesics
joining any two distinct points.

Given $X\in \mathcal{T}_{g,n}$ and a complete geodesic lamination $\lambda$ on $X$, we consider the map
\begin{eqnarray*}
\Gamma(t) : \mathbb{R}_{\geq 0} &\to& \mathcal{T}_{g,n} \\
t &\mapsto& \Gamma_\lambda^t(X).
\end{eqnarray*}
By definition, $\Gamma(t)$ is the stretch ray directed by $\lambda$ starting at $X$.
Note that $\Gamma(0)=X$. There is a unique measured lamination $\mu$ which is equivalent to the
horocylic foliation $F_\lambda(X)$. In fact, there is a one-to-one correspondence between measured laminations and
(equivalence class of) measured foliations on $X$. The measured lamination $\mu$ equivalent to $F_\lambda(X)$
is totally transverse to $\lambda$ (see Thurston \cite[Proposition 9.4]{Thurston}).

In the following, we assume that $\lambda$ has no closed leaves.
It follows from this assumption that $\lambda$ is obtained from its stump by adding finitely many infinite geodesics.
Therefore any
simple closed geodesic or any geodesic arc (connecting two simple closed geodesics $\beta_1, \beta_2$
with $i(\beta_1,\beta_2)=0$ perpendicularly) is transverse to $\lambda$.  We shall use this fact. Papadopoulos \cite{Papa}
proved the following:
\begin{lemma}\label{lemma:papa}
 For any simple closed curve $\gamma$ on $R$, there is a constant $C_\gamma$ that depends only on $\gamma$ such that
$$e^ti(\mu,\gamma)\leq \ell_\gamma(\Gamma(t))\leq e^t i(\mu,\gamma)+C_\gamma.$$
\end{lemma}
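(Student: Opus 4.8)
\medskip

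\noindent The plan is to extract both inequalities from the behaviour of the horocyclic foliation along the stretch ray. Write $F:=F_\lambda(X)$, so that by the definition $\Gamma(t)=\phi_\lambda^{-1}(e^tF)$ the horocyclic foliation of $\Gamma(t)$ relative to $\lambda$ is $e^tF$. Since $\mu$ is the measured lamination equivalent to $F$, and equivalence means having the same intersection number with every simple closed curve, we have $i(\mu,\gamma)=i(F,\gamma)$, hence $i(e^tF,\gamma)=e^ti(\mu,\gamma)$ by positive homogeneity. Because $\lambda$ has no closed leaves, $\gamma$ is transverse to $\lambda$; and since $\lambda$ is complete, $\Gamma(t)\setminus\lambda$ is a fixed finite union of ideal triangles, each carrying the universal horocyclic picture of Figure~\ref{fig:horo}, whose non-foliated central region has diameter bounded by a universal constant, independently of $t$. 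These are the ingredients I would use.

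\medskip

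\noindent For the lower bound, I would note that the transverse measure of the horocyclic foliation is $1$-Lipschitz for the hyperbolic metric of $\Gamma(t)$: it equals hyperbolic arc length along the leaves of $\lambda$, and in the regions foliated by horocyclic arcs a direct computation in the upper half-plane model (leaves $\{y=\text{const}\}$, transverse coordinate $\log y$) shows that any rectifiable path has $e^tF$-transverse measure at most its hyperbolic length. Thus for every representative $\gamma'$ of $\gamma$ we get $\ell_{\gamma'}(\Gamma(t))\ge i_{e^tF}(\gamma')\ge i(e^tF,\gamma)=e^ti(\mu,\gamma)$, and passing to the infimum over $\gamma'$, attained by the geodesic representative, gives $\ell_\gamma(\Gamma(t))\ge e^ti(\mu,\gamma)$.

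\medskip

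\noindent For the upper bound I would produce a competitor curve. Fix a representative $\gamma_0$ of $\gamma$ minimizing the $F$-transverse measure; since $e^tF$ is a positive rescaling of $F$, the same $\gamma_0$ minimizes the $e^tF$-transverse measure for every $t$. One can take $\gamma_0$ to run along leaves of $\lambda$ except for finitely many excursions of bounded length, one near each central region it meets; the number $N_\gamma$ of these excursions, together with the $F$-transverse measures $m_1,\dots,m_N$ (summing to $i(\mu,\gamma)$) of the pieces of $\gamma_0$ running along $\lambda$, depends only on the class of $\gamma$ and on $\lambda$, not on $t$. Realizing the same combinatorial curve $\widehat\gamma_t\simeq\gamma$ in $\Gamma(t)$, its pieces along leaves of $\lambda$ have hyperbolic length equal to their $e^tF$-transverse measure $e^tm_i$, contributing $e^ti(\mu,\gamma)$ in total, while each of the $N_\gamma$ excursions has length at most the universal bound $C_0$ for the diameter of a central region. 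Hence $\ell_\gamma(\Gamma(t))\le\ell_{\widehat\gamma_t}(\Gamma(t))\le e^ti(\mu,\gamma)+N_\gamma C_0=:e^ti(\mu,\gamma)+C_\gamma$.

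\medskip

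\noindent The delicate step is the upper bound: one must check that the stretch rescales \emph{only} the along-$\lambda$ portions of the optimal representative — which is precisely what produces the main term $e^ti(\mu,\gamma)$ — while leaving the contribution of the pieces inside the complementary ideal triangles bounded uniformly in $t$. This is where the precise geometry of the horocyclic foliation is needed (the fixed finite number of complementary triangles, the universal size of the non-foliated core, and the fact that the shear introduced by stretching is absorbed along the leaves of $\lambda$), and where the hypothesis that $\lambda$ has no closed leaves is used, so that $\gamma$ meets $\lambda$ transversally in a combinatorial pattern that does not vary along the ray.
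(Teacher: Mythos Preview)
The paper does not supply its own proof of this lemma; it is quoted as a result of Papadopoulos and attributed to \cite{Papa} (specifically Lemma~4.9 there). Your sketch reproduces essentially the argument of that reference: the lower bound follows because the horocyclic transverse measure is $1$-Lipschitz for the hyperbolic metric (equality on leaves of $\lambda$, and the elementary upper-half-plane estimate you give in the foliated prongs), while the upper bound comes from a quasi-transverse competitor built from segments along $\lambda$ together with finitely many short pieces near the non-foliated cores, whose number and size are independent of $t$ because the stretch fixes the central triangles and only rescales the along-$\lambda$ direction.

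One point worth tightening in your write-up: when $\lambda$ has a minimal component with uncountably many leaves, $\gamma\cap\lambda$ is typically a Cantor set, so $\gamma\setminus\lambda$ has infinitely many complementary arcs, not finitely many. The correct statement is that only \emph{finitely many} of these arcs meet the non-foliated central region of some complementary ideal triangle (at most three per triangle, and there are $-2\chi(R)$ triangles); the remaining arcs sit in the prongs and can be homotoped to staircase paths whose horocyclic pieces have length bounded by the length of the boundary horocycle of the core, summing to a finite contribution. Equivalently, one realises $\gamma_0$ as a genuine quasi-transverse representative in the sense of \cite{FLP}, whose leaf-segments in the prongs only shrink under the stretch. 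With that clarification your argument is complete and matches the cited source.
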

This implies that,
as $t\to +\infty$, $\Gamma(t)$ converges to $[\mu]$, the projective class of $\mu$
on the boundary of Thurston's compactification.

When $i(\mu,\gamma)=0$, Lemma \ref{lemma:papa} says that $\ell_\gamma(\Gamma(t))$ is bounded above by
a constant $C_\gamma$ (depending on $\gamma$).  The following result of Th\'eret \cite{Theret}
gives a further estimate for $\ell_\gamma(\Gamma(t))$.

\begin{lemma}\label{lem:theret}
Let $\gamma$ be a simple closed curve on $R$ with $i(\mu,\gamma)=0$. If $\gamma$ is a leaf of $\mu$ with
wight equal to $\omega_\gamma$, then
$$\ell_\gamma(\Gamma(t))\leq \frac{3|\chi(R)|}{\sinh(e^t \omega_\gamma/2)}.$$
If $\gamma$ is not a leaf of $\mu$ (in this case we set $\omega_\gamma=0$), then
$$B_\gamma\leq \ell_\gamma(\Gamma(t))\leq C_\gamma,$$
where $B_\gamma$ and $C_\gamma$ are positive constants that depend only on $\gamma$.
\end{lemma}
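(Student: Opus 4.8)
The plan is to reduce everything to the geometry of the horocyclic foliation $F_\lambda(X)$ and to exploit the explicit effect of the stretch flow on its transverse measure. Recall that along the stretch ray $\Gamma(t)=g_\lambda^t(X)$ the horocyclic foliation scales as $F_\lambda(\Gamma(t))=e^t F_\lambda(X)$, while the ``stretched'' direction (the lamination $\lambda$, carrying hyperbolic arclength) is multiplied by $e^{-t}$; thus $\Gamma(t)$ is obtained from $X$ by an $e^t$-stretch transverse to $\lambda$ and an $e^{-t}$-contraction along $\lambda$. First I would treat the case where $\gamma$ is a leaf of $\mu$ with weight $\omega_\gamma$. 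Since $i(\mu,\gamma)=0$ and $\gamma$ is a closed leaf of the measured lamination $\mu$ equivalent to $F_\lambda(X)$, the geodesic $\gamma$ is a closed leaf of $\lambda$ as well, so its length contracts under the stretch. Concretely, one passes to a (right-angled or ideal) hexagon/annulus decomposition around $\gamma$: the collar of $\gamma$ in $X$ contains horocyclic leaves transverse to $\gamma$ whose total transverse measure across the collar controls $\omega_\gamma$, and after stretching, $\gamma$ becomes the core of a hyperbolic annulus whose width has grown so that $\ell_\gamma(\Gamma(t))$ is governed by the relation $\sinh(\ell_\gamma/2)\cdot(\text{width}) \sim \text{const}$. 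Using that the width grows like $e^t\omega_\gamma$ up to the total area bound $|\chi(R)|$ available (each complementary ideal triangle of $\lambda$ has area $\pi$, and there are $2|\chi(R)|$ of them, giving the numerical constant $3|\chi(R)|$ after the standard collar-area estimate), one gets
$$\ell_\gamma(\Gamma(t)) \leq \frac{3|\chi(R)|}{\sinh(e^t\omega_\gamma/2)}.$$
The cleanest route is to invoke Théret's computation directly as the cited result; if one wants to reprove it, the key identity is the formula expressing $\ell_\gamma$ in a hyperbolic surface in terms of the horocyclic-foliation coordinates transverse to a closed leaf, combined with $\mathrm{area} = \pi|\chi|$-type bounds on how much horocyclic measure can accumulate near $\gamma$.

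For the second case, where $\gamma$ is not a leaf of $\mu$ but still satisfies $i(\mu,\gamma)=0$, I would argue that $\gamma$ is then disjoint from (or asymptotic to) the stump of $\lambda$ and lies, together with all its stretch-ray images, in a region of the surface where the stretch deformation is ``neutral'': it neither stretches nor contracts $\gamma$ to first order. More precisely, since $i(\mu,\gamma)=0$ and $\gamma\notin\mathrm{supp}(\mu)$, the geodesic $\gamma$ can be isotoped off the stump, hence into the union of the finitely many non-dense leaves of $\lambda$ that were adjoined to the stump and the ideal triangles between them; in that complementary region the stretch flow acts by isometries on a bounded-geometry piece containing $\gamma$. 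One then uses the lower semicontinuity and properness of geodesic length functions together with the fact (Lemma \ref{lemma:papa} applied with $i(\mu,\gamma)=0$) that $\ell_\gamma(\Gamma(t))$ stays bounded above by $C_\gamma$, and a compactness argument (the projection of the stretch ray to the relevant moduli of the $\gamma$-containing subsurface is precompact) to obtain a positive lower bound $B_\gamma$.

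\textbf{Main obstacle.} The delicate point is the first case: making rigorous the claim that the horocyclic transverse measure near a closed leaf $\gamma$ of $\lambda$ governs the collar width after stretching, and extracting the sharp numerical constant $3|\chi(R)|$ rather than just ``some constant times $|\chi|$''. This requires the precise relationship, in Thurston's coordinates, between the weight $\omega_\gamma$ that $\mu$ puts on $\gamma$ and the hyperbolic collar width of $\gamma$ in $\Gamma(t)$, together with a careful accounting of how the total horocyclic length $\ell_{F_\lambda(X)}(\lambda\cap(\text{collar}))$ is constrained by the $\pi$-area of the adjacent ideal triangles. Since this is exactly the content of Théret's paper \cite{Theret}, the honest approach in the present exposition is to cite it; a self-contained proof would essentially reconstruct his collar-growth estimate, which is more than we need here.
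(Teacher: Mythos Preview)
The paper does not actually prove this lemma: it is quoted as a result of Th\'eret \cite{Theret} and used as a black box. So any correct argument you give would already go beyond what the paper does. That said, your sketch contains a real confusion that would make a self-contained proof fail.

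Your central claim in the first case is that ``the geodesic $\gamma$ is a closed leaf of $\lambda$ as well, so its length contracts under the stretch.'' This is incorrect under the standing hypotheses: just before Lemma~\ref{lemma:papa} the paper explicitly assumes that $\lambda$ has no closed leaves, so $\gamma$ cannot be a leaf of $\lambda$. Moreover, the stretch map expands arclength along the leaves of $\lambda$ by the factor $e^{t}$, it does not contract it; your statement that ``the lamination $\lambda$, carrying hyperbolic arclength, is multiplied by $e^{-t}$'' is the wrong direction. The correct picture is the dual one: $\mu$ is the measured lamination equivalent to the horocyclic foliation $F_\lambda(X)$, which is \emph{transverse} to $\lambda$; saying that $\gamma$ is a leaf of $\mu$ with weight $\omega_\gamma$ means that the horocyclic foliation has a flat cylinder with core $\gamma$ and transverse height $\omega_\gamma$. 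Along the stretch ray this height becomes $e^{t}\omega_\gamma$, which forces the hyperbolic collar about $\gamma$ in $\Gamma(t)$ to be wide, and the collar/length trade-off then gives the $\sinh(e^{t}\omega_\gamma/2)$ in the denominator. So the ingredients you list (collar width, the $\sinh$ relation, the $|\chi|$ count of ideal triangles) are the right ones, but they have to be organised around the fact that $\gamma$ is transverse to $\lambda$, not contained in it.

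In the second case your assertion that ``the stretch flow acts by isometries on a bounded-geometry piece containing $\gamma$'' is also not right: the stretch map is an $e^{t}$-Lipschitz homeomorphism globally and is not an isometry on any open set. The upper bound $\ell_\gamma(\Gamma(t))\leq C_\gamma$ does follow from Lemma~\ref{lemma:papa} with $i(\mu,\gamma)=0$, as you say, but the lower bound $B_\gamma$ requires Th\'eret's actual analysis of how the horocyclic foliation behaves near a curve disjoint from the support of $\mu$; a soft ``precompactness of the projection'' argument does not immediately give this without knowing something about how the stretch flow moves in the moduli of the complementary subsurface. Given all this, the honest option you mention at the end---citing \cite{Theret}---is exactly what the paper does, and is the right call here.
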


\section{Geometry of the arc metric}\label{sec:arc}
In this section, we prove our main theorem.
We first recall the definition of the arc metric. Then we introduce the horofunction compactification of the arc metric.
Finally, we show that Thurston's compactification $\overline{\mathcal{T}(S)}$ is homeomorphic
to the horofunction compactification of the arc metric.

\subsection{The arc metric}

For any
$\gamma\in \mathcal{A}(S)\cup \mathcal{C}(S)$ and for any hyperbolic structure $X$ on $S$, we let
$\gamma^X$ be the
geodesic representative of $\gamma$
 (that is, the curve of shortest length in the homotopy
class relative to $\partial S$). In the case where
 $\gamma$ is an equivalence class of arcs, the geodesic
$\gamma^X$ is unique, and it is orthogonal to $\partial X$ at each
intersection point. We denote by
$\ell_\gamma(X)$
 the length of
$\gamma^X$ with respect to the hyperbolic metric considered. This length
depends only on the equivalence class of $X$ in Teichm\"uller space.

Let $S$ be a hyperbolic surface with geodesic boundary. Let $\mathcal{C}=\mathcal{C}(S)$ and $\mathcal{A}=\mathcal{A}(S)$.
In the paper \cite{LPST}, the authors defined an asymmetric metric,
 the \emph{arc metric}, on $\mathcal{T}(S)$ by \begin{equation} \label{eq:LL}d(X,Y)=\log \sup_{\gamma\in \mathcal{C}\cup \mathcal{A}}  \frac{\ell_\gamma(Y)}{\ell_\gamma(X)}.
\end{equation}
Relations between the arc metric and the Teichm\"uller metric are studied in the same paper.

\begin{remark}
Note that the arcs are necessary in order to have a metric because if we use only the closed curves, then there exist $X,Y$ such that (see \cite{PT10})
$$\log \sup_{\gamma\in \mathcal{C}}  \frac{\ell_\gamma(Y)}{\ell_\gamma(X)}<0.$$
\end{remark}

The definition of the arc metric is a natural generalization of Thurston's formula $(\ref{equ:Thurston})$.
\begin{proposition}[\cite{LPST}]
The  map $\Psi$ (defined in Section \ref{sec:pre}) gives an isometric embedding
\begin{equation*}\label{eq:doubling}
\left( \mathcal{T}(S), d \right) \hookrightarrow \left(\mathcal{T}(S^d),d_{\mathrm{Th}} \right),
\end{equation*}
that is,

$$d(X,Y)=d_{\mathrm{Th}}(X^d,Y^d).$$
\end{proposition}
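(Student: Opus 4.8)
The plan is to reduce the equality $d(X,Y)=d_{\mathrm{Th}}(X^d,Y^d)$ to the identities already assembled in the excerpt, namely Thurston's formula \eqref{equ:Thurston} for $d_{\mathrm{Th}}$ on the closed surface $S^d$, Proposition \ref{pro:maximum} comparing the sup over $\mathcal{C}\cup\mathcal{A}$ with the sup over $\mathcal{ML}(S)$, and the compatibility of lengths under doubling. First I would write $d_{\mathrm{Th}}(X^d,Y^d)=\log\sup_{\tilde\gamma\in\mathcal{C}(S^d)}\ell_{\tilde\gamma}(Y^d)/\ell_{\tilde\gamma}(X^d)$, and by density of weighted simple closed curves in $\mathcal{ML}(S^d)$ together with continuity and homogeneity of the length function, rewrite this as $\log\sup_{\tilde\mu\in\mathcal{ML}(S^d)}\ell_{\tilde\mu}(Y^d)/\ell_{\tilde\mu}(X^d)$. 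So it suffices to match this with $d(X,Y)=\log\sup_{\gamma\in\mathcal{C}\cup\mathcal{A}}\ell_\gamma(Y)/\ell_\gamma(X)$, which by Proposition \ref{pro:maximum} equals $\log\sup_{\mu\in\mathcal{ML}(S)}\ell_\mu(Y)/\ell_\mu(X)$.

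The core of the argument is then the pair of facts about doubling: for any $\mu\in\mathcal{ML}(S)$ one has $\ell_{\mu^d}(Z^d)=2\,\ell_\mu(Z)$ for every $Z\in\mathcal{T}(S)$ (for a weighted simple closed geodesic or geodesic arc this is immediate from the construction of $X^d$ — a geodesic arc meeting $\partial S$ perpendicularly doubles to a closed geodesic of twice the length, an interior simple closed geodesic doubles to two disjoint copies, and a boundary curve of weight $a$ doubles to the same curve with weight $2a$ as noted after Lemma \ref{lem:sym} — and the general case follows by the density of rational laminations, Lemma \ref{lem:dense}, and continuity of $\ell$); and conversely every symmetric measured lamination on $S^d$ is of the form $\mu^d$ by Lemma \ref{lem:sym}. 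Consequently $\sup_{\mu\in\mathcal{ML}(S)}\ell_\mu(Y)/\ell_\mu(X)=\sup_{\mu\in\mathcal{ML}(S)}\ell_{\mu^d}(Y^d)/\ell_{\mu^d}(X^d)=\sup_{\tilde\mu\in\mathcal{ML}^{sym}(S^d)}\ell_{\tilde\mu}(Y^d)/\ell_{\tilde\mu}(X^d)$, the last step using Lemma \ref{lem:sym}.

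It remains to see that restricting the supremum on $S^d$ to the symmetric laminations does not change its value, i.e.
$$\sup_{\tilde\mu\in\mathcal{ML}^{sym}(S^d)}\frac{\ell_{\tilde\mu}(Y^d)}{\ell_{\tilde\mu}(X^d)}=\sup_{\tilde\mu\in\mathcal{ML}(S^d)}\frac{\ell_{\tilde\mu}(Y^d)}{\ell_{\tilde\mu}(X^d)}.$$
For this I would symmetrize: given any $\tilde\mu\in\mathcal{ML}(S^d)$, the lamination $\tilde\mu+J_*\tilde\mu$ is symmetric, and since $X^d$ and $Y^d$ are both $J$-invariant, $\ell_{J_*\tilde\mu}(Z^d)=\ell_{\tilde\mu}(Z^d)$; an elementary estimate of the form $\frac{a_1+a_2}{b_1+b_2}\le\max\!\big(\frac{a_1}{b_1},\frac{a_2}{b_2}\big)$ then shows the ratio for the symmetrized lamination is at least as large as that for one of $\tilde\mu,J_*\tilde\mu$, hence the symmetric sup dominates — and the reverse inequality is trivial. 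Stringing these equalities together gives $d_{\mathrm{Th}}(X^d,Y^d)=d(X,Y)$. The main obstacle I anticipate is the careful bookkeeping of the factor-of-two in the doubling of lengths and making sure the limiting argument from rational laminations is legitimate (this uses that the double of a sequence of interior weighted simple closed curves approximating a minimal component stays inside a geodesically convex subsurface disjoint from the symmetry locus, so no perpendicularity subtleties arise); the symmetrization step itself is routine once one notes the invariance $\ell_{J_*\tilde\mu}(Z^d)=\ell_{\tilde\mu}(Z^d)$.
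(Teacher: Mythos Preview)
The paper does not prove this proposition in the text; it simply quotes it from \cite{LPST}. Your argument is a reasonable reconstruction and is sound in outline, but there is one genuine gap in the symmetrization step.

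You assert that for any $\tilde\mu\in\mathcal{ML}(S^d)$ the object $\tilde\mu+J_*\tilde\mu$ is a symmetric measured lamination. This is not automatic: the supports of $\tilde\mu$ and $J_*\tilde\mu$ may intersect transversally (for instance when $\tilde\mu$ is a simple closed geodesic crossing $\partial S$ non-orthogonally), and then their sum is not a measured lamination at all. Your mediant estimate is fine once the sum is known to lie in $\mathcal{ML}(S^d)$ --- indeed, since $\ell_{\tilde\mu}(Z^d)=\ell_{J_*\tilde\mu}(Z^d)$ for $J$-invariant $Z^d$, the two ratios coincide and the symmetrized ratio is exactly equal to them --- but the membership in $\mathcal{ML}(S^d)$ is precisely what needs to be argued.

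The cleanest repair uses a fact of Thurston that you have not invoked: in \cite{Thurston} it is shown that the supremum in \eqref{equ:Thurston} is attained, and that the union of the supports of all maximizing measured laminations is itself a geodesic lamination $\Lambda=\Lambda(X^d,Y^d)$. Since $X^d$ and $Y^d$ are $J$-invariant, $J(\Lambda)=\Lambda$. Now if $\mu_0\in\mathcal{ML}(S^d)$ realizes the supremum, then so does $J_*\mu_0$, hence both are supported in $\Lambda$; therefore $\mu_0+J_*\mu_0$ \emph{is} a measured lamination (supported in $\Lambda$), it is symmetric, and it realizes the supremum. With this in place your chain of equalities
\[
\sup_{\mu\in\mathcal{ML}(S)}\frac{\ell_\mu(Y)}{\ell_\mu(X)}
=\sup_{\tilde\mu\in\mathcal{ML}^{sym}(S^d)}\frac{\ell_{\tilde\mu}(Y^d)}{\ell_{\tilde\mu}(X^d)}
=\sup_{\tilde\mu\in\mathcal{ML}(S^d)}\frac{\ell_{\tilde\mu}(Y^d)}{\ell_{\tilde\mu}(X^d)}
\]
goes through, and together with Proposition~\ref{pro:maximum} and \eqref{equ:Thurston} this gives $d(X,Y)=d_{\mathrm{Th}}(X^d,Y^d)$.
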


\subsection{Horofunction compactification}

Let $\mathcal{T}(S)$ be the Teichm\"uller space of $S$
endowed with the arc metric $d$. We set $\bar d(X,Y)=d(Y,X)$. Then, $\bar d$ is also an asymmetric metric on  $\mathcal{T}(S)$.
The topology of $\mathcal{T}(S)$ induced by the arc metric $d$ is the
same as the one induced by $\bar d$, and it is defined as the topology induced by the genuine metric  $d+\bar d$ or $\delta=\max\{d,\bar d\}$ (see  \cite[Theorem 4.4]{LPST}).

 Fix a base point $X_0\in \mathcal{T}(S)$.
To each $X\in \mathcal{T}(S)$ we assign a function
$\Phi_X: \mathcal{T}(S) \to \mathbb{R}$, defined by
\begin{equation*}\label{Equation:Psi}
\Phi_X(Y)=d(Y,X)-d(X_0,X).
\end{equation*}
Let $C(\mathcal{T}(S))$ be the space of continuous functions on $\mathcal{T}(S)$ endowed with the topology
of locally uniform convergence.
Then the map
\begin{eqnarray*}
\Phi:\mathcal{T}(S)&\to& C(\mathcal{T}(S)), \\
 X&\mapsto& \Phi_X
\end{eqnarray*} is an embedding.
The closure $\overline{\Phi(\mathcal{T}(S))}$ is compact (this follows from the fact that
$\mathcal{T}(S)$ is locally compact and the Arzel\'a-Ascoli theorem) and it is called the \emph{horofunction compactification}  \index{Horofunction compactification}  of $\mathcal{T}(S)$. The \emph{horofunction boundary} is defined to be
$$\overline{\Phi(\mathcal{T}(S))}-\Phi(\mathcal{T}(S)),$$
and its elements are called \emph{horofunctions}.

\begin{remark}
For a general locally compact metric space $(M,d)$, the horofunction compactification is defined
by Gromov \cite{Gromov}. A good property of the horofunction compactification is that the action of the isometry group $\mathrm{Isom} (M, d)$  of $M$
extends continuously to a homeomorphism on the horofunction
boundary.
\end{remark}

Note that our definition  depends on the choice of a base point $X_0$. However, if we let
 $$\widetilde{\Phi}_X=d(\cdot,X)-d(Y_0,X)$$
 for another base point $Y_0$, then the relation between $\Phi_X$ and $\widetilde{\Phi}_X$
 is described by
\begin{equation}\label{equ:basis}
\widetilde{\Phi}_X(\cdot)=\Phi_X(\cdot)-\Phi_X(Y_0).
\end{equation}
Equation $(\ref{equ:basis})$ induces a natural homeomorphism between $\Psi(\mathcal{T}(S))$
and $\widetilde{\Psi}(\mathcal{T}(S))$ and it induces a homeomorphism between
the corresponding horofunction boundaries.
As a result, we can embed  the Teichm\"uller space $\mathcal{T}(S)$ into the quotient of $C(\mathcal{T}(S))$
by the $1$-dimensional subspace of constant functions, by identifying two functions in $C(\mathcal{T}(S))$ whenever they differ by an additive constant. For convenience, in the following discussion, we shall fix a base point.

In the remaining part of this paper, we shall make the identification
$$\mathcal{PML}\cong \{\eta\in \mathcal{ML}(S) \ | \ \ell_\eta(X_0)=1\}.$$

Suppose that $X\in \mathcal{T}(S)$. From the definition,

$$\Phi_X(\cdot)=\log \sup_{\eta\in \mathcal{PML}}  \frac{\ell_\eta(X)}{\ell_\eta(\cdot)}-
\log \sup_{\eta\in \mathcal{PML}}  \frac{\ell_\eta(X)}{\ell_\eta(X_0)}.$$

For any $\gamma\in \mathcal{ML}$, we set
$$\mathcal{L}_\gamma(X)=\ell_\gamma(X)/\sup_{\eta\in \mathcal{PML}}  \frac{\ell_\eta(X)}{\ell_\eta(X_0)}.$$

Then
\begin{equation}\label{eq:Q}
\Phi_X(\cdot)=\log \sup_{\gamma\in \mathcal{PML}} \frac{\mathcal{L}_\gamma(X)}{\ell_\gamma(\cdot)}.
\end{equation}

\subsection{Convergence in Thurston's compactification}
Let $(X_n)$ be a sequence in $\mathcal{T}(S)$  that converges to $\mu\in \mathcal{PML}$.
From the definition,
there exists a sequence of numbers $(c_n), c_n>0$,
such that for any $\gamma\in \mathcal{ML}$, $c_n\ell_\gamma(X_n)\to i(\mu,\gamma)$ as $n\to\infty$.
We claim that the following holds:

\begin{lemma}
With the above notation, we have:
\[\mathcal{L}_\gamma(X_n)\to i(\mu,\gamma)/\sup_{\nu\in \mathcal{PML}}  \frac{i(\mu,\nu)}{\ell_\nu(X_0)}\ \hbox{as} \ n\to\infty.\]
\end{lemma}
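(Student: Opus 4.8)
The plan is to reduce the assertion to the convergence of a single supremum, to handle the numerator with the hypothesis, and to obtain the limit of the denominator by a compactness argument transported to the closed double $S^d$. Throughout, I would use the identification $\mathcal{PML}\cong\{\eta\in\mathcal{ML}(S)\mid\ell_\eta(X_0)=1\}$; this is a compact subset of $\mathcal{ML}(S)$ (its compactness is precisely what is established inside the proof of Proposition \ref{pro:maximum}, and it is a topological sphere by Proposition \ref{pro:sphere}). Set
$$M_n:=\sup_{\eta\in\mathcal{PML}}c_n\,\ell_\eta(X_n),\qquad M:=\sup_{\nu\in\mathcal{PML}}i(\mu,\nu).$$
A direct computation from the definition of $\mathcal{L}_\gamma$ (using the above identification) gives $\mathcal{L}_\gamma(X_n)=c_n\ell_\gamma(X_n)/M_n$, and the asserted limit is exactly $i(\mu,\gamma)/M$. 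Here $M<+\infty$ because $\mathcal{PML}$ is compact and $i(\mu,\cdot)$ is continuous, and $M>0$ because $\mu\neq 0$ forces $i(\mu,\gamma)>0$ for some $\gamma\in\mathcal{C}\cup\mathcal{A}$ (Lemma \ref{lem:map2} and Remark \ref{remark:disjoint}). Since $c_n\ell_\gamma(X_n)\to i(\mu,\gamma)$ is given for every $\gamma\in\mathcal{ML}$, the entire statement reduces to proving $M_n\to M$.

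The inequality $\liminf_n M_n\geq M$ is immediate: for each fixed $\nu\in\mathcal{PML}$ one has $M_n\geq c_n\ell_\nu(X_n)\to i(\mu,\nu)$, so $\liminf_n M_n\geq i(\mu,\nu)$, and taking the supremum over $\nu$ gives $\liminf_n M_n\geq M$. For the reverse inequality I would argue by compactness. Continuity of $\eta\mapsto\ell_\eta(X_n)$ on the compact set $\mathcal{PML}$ yields a maximizer $\eta_n$ with $M_n=c_n\ell_{\eta_n}(X_n)$; passing to a subsequence we may assume $M_n\to\limsup_n M_n$ and $\eta_n\to\eta_\infty\in\mathcal{PML}$. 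It then suffices to establish
$$c_n\,\ell_{\eta_n}(X_n)\longrightarrow i(\mu,\eta_\infty),$$
since this gives $\limsup_n M_n=i(\mu,\eta_\infty)\leq M$, hence $M_n\to M$, and therefore $\mathcal{L}_\gamma(X_n)=c_n\ell_\gamma(X_n)/M_n\to i(\mu,\gamma)/M$.

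The displayed convergence is the heart of the matter, and the step I expect to be the real obstacle: the hypothesis provides it only when $\eta_n$ is replaced by the fixed lamination $\eta_\infty$, whereas here the lamination moves with $n$, so one needs a continuity statement that is \emph{joint} in the hyperbolic structure and the lamination. I would obtain it by passing to the closed double $S^d$: by Theorem \ref{thm:double} one has $X_n^d\to[\mu^d]$ in Thurston's compactification of $\mathcal{T}(S^d)$, by Lemma \ref{lem:sym} (continuity of $\psi$) one has $\eta_n^d\to\eta_\infty^d$ in $\mathcal{ML}(S^d)$, and since doubling multiplies both lengths and intersection numbers by $2$, it is enough to prove the corresponding statement on $S^d$: if $Y_m\to[\lambda]$ in Thurston's compactification with normalizing constants $\varepsilon_m>0$ and $\zeta_m\to\zeta$ in $\mathcal{ML}(S^d)$, then $\varepsilon_m\ell_{\zeta_m}(Y_m)\to i(\lambda,\zeta)$. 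This follows from Bonahon's theory of geodesic currents on the closed surface $S^d$ \cite{Bonahon}: writing $L_Y$ for the Liouville current of $Y$, so that $i(L_Y,\alpha)=\ell_\alpha(Y)$ for every $\alpha\in\mathcal{ML}(S^d)$, convergence $Y_m\to[\lambda]$ in Thurston's compactification amounts, after rescaling $\varepsilon_m$, to $\varepsilon_m L_{Y_m}\to\lambda$ in the space of geodesic currents, and joint continuity of the intersection pairing then gives $\varepsilon_m\ell_{\zeta_m}(Y_m)=i(\varepsilon_m L_{Y_m},\zeta_m)\to i(\lambda,\zeta)$. Alternatively, without invoking currents, this is the standard fact that $\varepsilon_m\ell_{(\cdot)}(Y_m)\to i(\lambda,\cdot)$ uniformly on the compact set $\mathcal{PML}(S^d)$ (see \cite{FLP}), which immediately implies convergence of the suprema. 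This completes the plan.
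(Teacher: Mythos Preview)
Your proposal is correct, and at its core it coincides with the paper's proof: both reduce the statement to showing that
\[
\sup_{\eta\in\mathcal{PML}}\frac{c_n\ell_\eta(X_n)}{\ell_\eta(X_0)}\longrightarrow \sup_{\eta\in\mathcal{PML}}\frac{i(\mu,\eta)}{\ell_\eta(X_0)},
\]
which with the normalization $\ell_\eta(X_0)=1$ is exactly your $M_n\to M$. The paper dispatches this in one line by asserting that $c_n\ell_\eta(X_n)\to i(\mu,\eta)$ \emph{uniformly} on the compact set $\mathcal{PML}$, citing the continuity argument of Walsh \cite[Lemma~3.1]{Walsh}; uniform convergence of continuous functions on a compact set makes the suprema converge immediately. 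This is precisely what you list as your ``alternative'' at the end.

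Your primary route---pick maximizers $\eta_n$, pass to a subsequence $\eta_n\to\eta_\infty$, transport to the double $S^d$, and appeal to the joint continuity of Bonahon's intersection pairing on geodesic currents---is a genuinely different justification of the same key step. It buys you a conceptually clean reason for the joint continuity (bilinearity plus continuity of $i(\cdot,\cdot)$ on currents), at the cost of having to check that doubling is compatible with lengths and with intersection numbers and that the same normalizing constants $c_n$ work on $S^d$. This is true, but note that your sentence ``doubling multiplies both lengths and intersection numbers by $2$'' deserves a moment of care when boundary components are involved (cf.\ the remark just after the definition of $\psi$, where $\mu^d=(\alpha,2a)$ for a boundary curve $\alpha$); the proportionality is consistent, so the conclusion stands. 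Also be aware that the reference \cite{Bonahon} in this paper points to Birman--Series, not to Bonahon's currents paper, so you would need to cite the correct source.
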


\begin{proof}
Note that

\begin{eqnarray*}
\mathcal{L}_\gamma(X_n)&=& \ell_\gamma(X_n)/ \sup_{\eta\in \mathcal{PML}}\frac{\ell_\eta(X_n)}{\ell_\eta(X_0)}\\
&=& c_n\ell_\gamma(X_n)/ \sup_{\eta\in \mathcal{PML}}\frac{c_n\ell_\eta(X_n)}{\ell_\eta(X_0)}\\
\end{eqnarray*}

By assumption, $c_n\ell_\eta(X_n)\to i(\mu,\eta)$ (as $n\to\infty$) for all $\eta\in \mathcal{PML}$.
By a continuity argument (the same proof as \cite[Lemma 3.1]{Walsh}),
we have $c_n\ell_\eta(X_n)\to i(\mu,\eta)$ uniformly on $\mathcal{PML}$.
This implies that
$$\lim_{n\to\infty}\sup_{\eta\in \mathcal{PML}}\frac{c_n\ell_\eta(X_n)}{\ell_\eta(X_0)}=\sup_{\eta\in \mathcal{PML}}\frac{i(\mu,\eta)}{\ell_\eta(X_0)}.$$

Since $c_n\ell_\gamma(X_n)\to i(\mu,\gamma)$ as $n\to\infty$, we are done.
\end{proof}

For $\gamma$ and $\mu$ in $\mathcal{ML}$, we set
$$\mathcal{L}_\gamma(\mu)=i(\mu,\gamma)/\sup_{\nu\in \mathcal{PML}}  \frac{i(\mu,\nu)}{\ell_\nu(X_0)}.$$
Note that the value $\mathcal{L}_\gamma(\mu)$ is invariant by multiplication of $\mu$ by a positive constant, therefore we can also define $\mathcal{L}_\gamma(\mu)$ by the same formula for $\mu$ in $\mathcal{PML}$.

\begin{proposition}\label{pro:convergent}
A sequence $(X_n)$ in $\mathcal{T}(S)$ converges to $\mu\in \mathcal{PML}$
if and only if $\mathcal{L}_\gamma(X_n)$ converges to $\mathcal{L}_\gamma(\mu)$ for all
$\gamma\in \mathcal{ML}$.
\end{proposition}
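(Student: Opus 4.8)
The plan is to prove the equivalence in two directions, noting that the forward implication is essentially the content of the preceding lemma once combined with the definition of convergence in Thurston's compactification, so the real work is in the converse.

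For the forward direction, suppose $(X_n)$ converges to $\mu \in \mathcal{PML}$ in Thurston's compactification. By the discussion at the beginning of this subsection, there is a sequence $c_n > 0$ with $c_n \ell_\gamma(X_n) \to i(\mu,\gamma)$ for every $\gamma \in \mathcal{ML}$. The preceding lemma then gives exactly $\mathcal{L}_\gamma(X_n) \to i(\mu,\gamma)/\sup_{\nu\in\mathcal{PML}} \big(i(\mu,\nu)/\ell_\nu(X_0)\big) = \mathcal{L}_\gamma(\mu)$ for all $\gamma \in \mathcal{ML}$. So this direction is immediate.

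For the converse, assume $\mathcal{L}_\gamma(X_n) \to \mathcal{L}_\gamma(\mu)$ for all $\gamma \in \mathcal{ML}$. I would first observe that, since $\mathcal{ML}(S)$ embeds as a closed subset of the compact space $\mathcal{ML}_2(S^d)$ after rescaling (the compactness argument used in the proof of Proposition~\ref{pro:maximum}, via the map $\psi$ and Thurston's compactness for $S^d$), the sequence $(X_n)$, viewed in $\overline{\mathcal{T}(S)} = \mathcal{T}(S) \cup \mathcal{PML}(S)$, is precompact; hence it suffices to show that every convergent subsequence has limit $\mu$. Take such a subsequence; by Theorem~\ref{thm:double} and Remark~\ref{remark:disjoint}, the limit is either a point $Z \in \mathcal{T}(S)$ or a point $[\nu] \in \mathcal{PML}(S)$. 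If the limit were a point $Z \in \mathcal{T}(S)$, then $\ell_\gamma(X_n) \to \ell_\gamma(Z)$ for all $\gamma$, and since the lengths $\ell_\gamma(X_n)$ stay bounded below by a positive constant depending only on $Z$ (Remark~\ref{remark:disjoint}) while there exist $\gamma$ with $i(\mu,\gamma)$ arbitrarily small, a short computation shows $\mathcal{L}_\gamma(X_n)$ cannot converge to $\mathcal{L}_\gamma(\mu)$ for all $\gamma$ — a contradiction. So the subsequential limit is some $[\nu]\in\mathcal{PML}$. By the forward direction already established, $\mathcal{L}_\gamma(X_n) \to \mathcal{L}_\gamma(\nu)$ along this subsequence, whence $\mathcal{L}_\gamma(\mu) = \mathcal{L}_\gamma(\nu)$ for all $\gamma \in \mathcal{ML}$. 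Unwinding the definition of $\mathcal{L}_\gamma$, this gives $i(\mu,\gamma) = \kappa\, i(\nu,\gamma)$ for all $\gamma$, for a fixed positive constant $\kappa$; by the injectivity of the map $I$ on $\mathcal{ML}(S)$ (Lemma~\ref{lem:map2}), $\mu = \kappa\nu$, i.e. $[\mu] = [\nu]$ in $\mathcal{PML}$. Since every subsequential limit equals $\mu$ and the sequence is precompact, $X_n \to \mu$.

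The main obstacle is the precompactness step and the exclusion of an interior limit point: one must make sure that ``$\mathcal{L}_\gamma$ converges for all $\gamma$'' genuinely forces the sequence away from the interior of $\mathcal{T}(S)$. This is where the normalization by $\sup_{\eta\in\mathcal{PML}} \ell_\eta(\cdot)/\ell_\eta(X_0)$ is used crucially, together with the two facts from Remark~\ref{remark:disjoint} (a uniform positive lower bound on lengths at an interior point, versus the existence of curves or arcs with intersection number with $\mu$ arbitrarily close to zero). Once these are in place, the argument is a standard precompactness-plus-uniqueness-of-limit argument, and the remaining identifications are handled by the injectivity lemmas already proved.
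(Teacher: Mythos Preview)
Your argument is correct and follows essentially the same route as the paper: forward direction by the preceding lemma, converse by passing to a subsequential limit in $\overline{\mathcal{T}(S)}$, identifying $\mathcal{L}_\gamma(\mu)=\mathcal{L}_\gamma(\nu)$, and deducing $[\mu]=[\nu]$ via injectivity of $I$. The paper simply asserts that $(X_n)$ is unbounded in $\mathcal{T}(S)$, whereas you supply the justification via Remark~\ref{remark:disjoint}; note that precompactness needs no appeal to Proposition~\ref{pro:maximum}, since $\overline{\mathcal{T}(S)}$ is compact by construction (Theorem~\ref{thm:double}).
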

\begin{proof}
We already showed that if  $(X_n)$ converges to $\mu$, then
$\mathcal{L}_\gamma(X_n)$ converges to $\mathcal{L}_\gamma(\mu)$ for all
$\gamma\in \mathcal{ML}$.

Conversely, assume that $\mathcal{L}_\gamma(X_n)$ converges to $\mathcal{L}_\gamma(\mu)$ for all
$\gamma\in \mathcal{ML}$. Then $(X_n)$ is unbounded in $\mathcal{T}(S)$.
Let $(Y_n)$ be any subsequence
of $X_n$ that converges to $\mu'\in \mathcal{PML}$. Then
$\mathcal{L}_\gamma(Y_n)$ converges to $\mathcal{L}_\gamma(\mu')$ for all
$\gamma\in \mathcal{ML}$. By assumption,
$\mathcal{L}_\gamma(\mu')=\mathcal{L}_\gamma(\mu)$, therefore
$$i(\mu,\gamma)/\sup_{\nu\in \mathcal{PML}}  \frac{i(\mu,\nu)}{\ell_\nu(X_0)}=
i(\mu',\gamma)/\sup_{\nu\in \mathcal{PML}}  \frac{i(\mu',\nu)}{\ell_\nu(X_0)}.$$

Therefore, if we set
$$C=\sup_{\nu\in \mathcal{PML}}  \frac{i(\mu,\nu)}{\ell_\nu(X_0)}/ \sup_{\nu\in \mathcal{PML}}  \frac{i(\mu',\nu)}{\ell_\nu(X_0)},$$
then $i(\mu,\gamma)=Ci(\mu',\gamma)$
for all $\gamma\in \mathcal{ML}$. This implies that $\mu=\mu'$ in $\mathcal{PML}$.
Since $(Y_n)$ is arbitrary, $(X_n)$ converges to $\mu$.
\end{proof}

\begin{corollary}\label{coro:convergent}
A sequence $(Z_n)$ in $\overline{\mathcal{T}(S)}$ converges to $Z\in \overline{\mathcal{T}(S)}$
if and only if $\mathcal{L}_\gamma(Z_n)$ converges to $\mathcal{L}_\gamma(Z)$ for all
$\gamma\in \mathcal{ML}$.
\end{corollary}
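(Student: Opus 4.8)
The plan is to recast the statement as the assertion that a natural coordinate map is a topological embedding. I would introduce
$$\Theta\colon \overline{\mathcal{T}(S)}\longrightarrow \mathbb{R}_{\geq 0}^{\mathcal{ML}(S)},\qquad Z\longmapsto \big(\mathcal{L}_\gamma(Z)\big)_{\gamma\in\mathcal{ML}(S)},$$
where the target carries the product topology and $\mathcal{L}_\gamma$ is extended to all of $\overline{\mathcal{T}(S)}$ by the formulas already in use: $\mathcal{L}_\gamma(X)=\ell_\gamma(X)e^{-d(X_0,X)}$ for $X\in\mathcal{T}(S)$, and $\mathcal{L}_\gamma(\mu)=i(\mu,\gamma)/\sup_{\nu\in\mathcal{PML}}\big(i(\mu,\nu)/\ell_\nu(X_0)\big)$ for $\mu\in\mathcal{PML}$. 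Since $\overline{\mathcal{T}(S)}$ is compact and metrizable and the target is Hausdorff, it suffices to prove that $\Theta$ is continuous and injective; then $\Theta$ is a homeomorphism onto its image, which is exactly the content of the corollary.

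For continuity I would argue by cases according to where $Z$ and the $Z_n$ lie. Because $\mathcal{T}(S)$ is open in $\overline{\mathcal{T}(S)}$, if $Z\in\mathcal{T}(S)$ and $Z_n\to Z$, then $Z_n\in\mathcal{T}(S)$ eventually, and there continuity of $\mathcal{L}_\gamma$ follows from continuity of the length function $\ell_\gamma$ and of $X\mapsto\sup_{\eta\in\mathcal{PML}}\ell_\eta(X)/\ell_\eta(X_0)=e^{d(X_0,X)}$. If $Z\in\mathcal{PML}$, I would split $(Z_n)$ into the subsequence lying in $\mathcal{T}(S)$ and the subsequence lying in $\mathcal{PML}$; for the first, $\mathcal{L}_\gamma(Z_n)\to\mathcal{L}_\gamma(Z)$ is precisely Proposition \ref{pro:convergent}. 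For the second I must show that $\mu_n\to\mu$ in $\mathcal{PML}(S)$ implies $i(\mu_n,\gamma)\to i(\mu,\gamma)$ for every $\gamma\in\mathcal{ML}(S)$, together with the uniformity $\sup_{\eta\in\mathcal{PML}}i(\mu_n,\eta)/\ell_\eta(X_0)\to\sup_{\eta\in\mathcal{PML}}i(\mu,\eta)/\ell_\eta(X_0)$; pointwise continuity of the intersection number on $\mathcal{ML}(S)$ I would deduce by passing to the double $S^d$, using the embedding $\psi$ of Lemma \ref{lem:sym} together with Thurston's joint continuity of intersection number on $\mathcal{ML}(S^d)$, while the passage from pointwise to uniform convergence over the compact set $\mathcal{PML}(S)$ (Proposition \ref{pro:sphere}) is the continuity argument of \cite[Lemma 3.1]{Walsh}. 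Combining the two subsequences gives $\mathcal{L}_\gamma(Z_n)\to\mathcal{L}_\gamma(Z)$.

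For injectivity I would first note that $\Theta(\mathcal{T}(S))$ and $\Theta(\mathcal{PML})$ are disjoint: for $X\in\mathcal{T}(S)$ the numbers $\mathcal{L}_\gamma(X)$ with $\gamma\in\mathcal{C}\cup\mathcal{A}$ are bounded below by a strictly positive constant (since the lengths $\ell_\gamma(X)$ are, cf. Remark \ref{remark:disjoint}), whereas $\inf_{\gamma\in\mathcal{C}\cup\mathcal{A}}\mathcal{L}_\gamma(\mu)=0$ for $\mu\in\mathcal{PML}$ because $\inf_\gamma i(\mu,\gamma)=0$. Hence if $\mathcal{L}_\gamma(Z)=\mathcal{L}_\gamma(Z')$ for all $\gamma$, then $Z$ and $Z'$ lie on the same side. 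If both are in $\mathcal{T}(S)$, then $\ell_\gamma(Z)/\ell_\gamma(Z')$ is independent of $\gamma\in\mathcal{C}\cup\mathcal{A}$, so $\pi\circ L(Z)=\pi\circ L(Z')$ and Lemma \ref{lem:map1} forces $Z=Z'$. If both are in $\mathcal{PML}$, then $i(Z,\gamma)=c\,i(Z',\gamma)$ for a fixed $c>0$ and all $\gamma\in\mathcal{ML}(S)$, so $Z=Z'$ in $\mathcal{PML}$ by Lemma \ref{lem:map2} — this is exactly the argument closing the proof of Proposition \ref{pro:convergent}. Once $\Theta$ is a continuous injection from a compact space to a Hausdorff space, it is a homeomorphism onto its image: the ``only if'' direction is then continuity, and the ``if'' direction follows by a standard subsequence argument (any subsequence of $(Z_n)$ has a sub-subsequence converging in $\overline{\mathcal{T}(S)}$, whose limit satisfies $\mathcal{L}_\gamma=\mathcal{L}_\gamma(Z)$ for all $\gamma$, hence equals $Z$ by injectivity). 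I expect the main obstacle to be the boundary-to-boundary continuity step — controlling the topology of $\mathcal{ML}(S)$ in the presence of components parallel to $\partial S$ and upgrading pointwise convergence of intersection numbers to uniform convergence on $\mathcal{PML}$ — and I intend to handle it by reducing to the classical theory on the doubled surface $S^d$ and invoking the compactness furnished by Proposition \ref{pro:sphere}.
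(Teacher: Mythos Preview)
Your proposal is correct and is essentially a fully worked-out version of what the paper records in a single line: ``This follows from Proposition~\ref{pro:convergent} and a usual continuity argument.'' The paper leaves the compactness/Hausdorff step and the boundary-to-boundary case implicit, whereas you spell out the embedding $\Theta$, verify continuity in each regime (interior-to-interior by continuity of lengths, interior-to-boundary by Proposition~\ref{pro:convergent}, boundary-to-boundary via the double and Walsh's uniformity lemma), and then deduce the ``if'' direction from the compact-to-Hausdorff fact. The only place where you go genuinely beyond the paper's sketch is the boundary-to-boundary continuity, which you handle correctly by passing to $\mathcal{ML}(S^d)$ through $\psi$; this is exactly the kind of ``usual continuity argument'' the authors have in mind, so there is no real divergence in strategy.
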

\begin{proof}
This follows from Proposition \ref{pro:convergent} and a usual continuity argument.
\end{proof}

For $\mu\in \mathcal{PML}(S)$, let $\Phi: \mathcal{PML} \to \mathcal{C}(\mathcal{T}(S))$ be the function defined by
\begin{equation}\label{eq:f} \Phi_\mu(\cdot)=\log \sup_{\gamma\in \mathcal{PML}} \frac{\mathcal{L}_\gamma(\mu)}{\ell_\gamma(\cdot)}.
\end{equation}

The maps on $\mathcal{PML}$ defined by Equations (\ref{eq:Q}) and (\ref{eq:f}) combine together and define a map

\begin{eqnarray*}
\Phi:\overline{\mathcal{T}(S)}&\to& C(\mathcal{T}(S)), \\
 Z&\mapsto& \Phi_Z.
\end{eqnarray*}

 By Corollary \ref{coro:convergent} and the compactness of $\mathcal{PML}$,
this map is continuous.
In \S \ref{sec:main}, we will prove that $\Phi$ is injective on $\overline{\mathcal{T}(S)}$.
The same result for surface without boundary was proved by Walsh \cite{Walsh}
by a direct method. Unfortunately, his argument does not apply here.
Our proof is based on the inequality (Lemma \ref{lemma:key}) in next section.

 \bigskip

\section{An inequality for length functions}\label{sec:inequality}
For any $\mu\in \mathcal{PML}$,
let  $\mu^d$ be the double of $\mu$ on $S^d$.
We endow $S^d$ with the hyperbolic structure $X_0^d$ and we choose a complete geodesic lamination $\lambda$ which contains no closed leaves and which is totally transverse to $\mu^d$.
(Recall that this is equivalent to saying that $\mu^d$ can be represented by a measured foliation transverse to $\lambda$ and trivial around each puncture.)

Denote by $\Gamma(t)$ the stretch line in $\mathcal{T}(S^d)$ directed by $\lambda$ and converging to $\mu^d$
in the positive direction, that is,

$$\Gamma(t)= \phi_\lambda^{-1}(e^t \mu^d)$$
where $\phi_\lambda$ is the map in Theorem \ref{th:T}.  For $t\geq 0$, the hyperbolic structure $\Gamma(t)$ might not be symmetric, and this is the reason for the technical work that follows.

Consider any $\alpha\in \mathcal{A}$.
We realize  $\alpha$ as a geodesic arc $\alpha_t$ on $\Gamma(t)$ whose endpoints are on two simple closed geodesics $\beta_1, \beta_2$
and which meets them perpendicularly. These closed geodesics are homotopic to the images in the hyperbolic surface $\Gamma(t)$ of the boundary curves of $S$ which contain the endpoints of $\alpha$. They can either coincide in $\Gamma(t)$ or be distinct, depending on whether they come from curves that coincide or are distinct in $S$.

Similarly, we can realize $\mu$ as a measured geodesic lamination $\mu_t$ on $\Gamma(t)$.
The support of $\mu_t$ lies on a totally geodesic subsurface of $\Gamma(t)$ which is
homeomorphic to $S$.
 The intersection number $i(\mu,\alpha)$ is realized by the total mass
 of the intersection of $\alpha_t$ with
$\mu_t$. Thus, we have:
$$i(\mu,\alpha)=  I (\mu_t, \alpha_t)$$
where

$$I(\mu_t, \alpha_t)=\int_{\alpha_t} d \mu_t.$$

We wish to prove an inequality  similar to \cite[Lemma 4.9]{Papa}.
The first step is to show that
there is a constant $C>0$ (depending only on the stretch line) such that for all $\alpha\in \mathcal{A}$,
$$e^ti(\mu,\alpha)-C \leq \ell_\alpha(\Gamma(t)).$$
This is confirmed by Lemma \ref{lem:lower} below.

We fix  $\alpha$ in $\mathcal{A}$ and the hyperbolic structure $\Gamma(t)$.
We will use the same notation $\alpha$ to denote the geodesic representation of $\alpha$ on $\Gamma(t)$.
We suppose that $\alpha$ joins two simple closed geodesics $\beta_1, \beta_2$
perpendicularly.  We set $\ell(\alpha)=\ell_\alpha(\Gamma(t))$ and so on.

\begin{remark}
It seems that the constant $C>0$ is necessary when $\alpha\in \mathcal{A}$.
This is due to the fact that the horocylic foliation $F_t$ equivalent to $e^t\mu^d$ is not symmetric. A similar argument as in \cite[Lemma 4.9]{Papa} shows that for any $\alpha\in \mathcal{C}$,
$e^ti(\mu^d, \alpha)\leq \ell_\alpha(\Gamma(t)).$
This can be done by showing that
$\ell_\alpha(\Gamma(t))\geq I(F_t, \alpha_t),$
where $\alpha_t$ is the geodesic representation of $\alpha$ on $\Gamma(t)$.
\end{remark}
\subsection{Estimation of arc length in a pair of pants}
The three geodesics $\beta_1,\beta_2, \alpha$ determine a geodesic pair of pants,  denoted by $\mathcal{P}$, which is isotopic to a  tubular neighborhood of $\alpha \cup \beta_1 \cup \beta_2$.

When $\beta_1 = \beta_2$ (and in this case we denote both curves by $\beta$), the boundary of $\mathcal{P}$ has three connected components:
one is $\beta$ and the other two will be denoted by $\gamma_1, \gamma_2$.
It may happen that $\gamma_1$ and $\gamma_2$  coincide on the surface $S^d$.

If $\beta_1 \neq \beta_2$, the boundary of $\mathcal{P}$ has three connected components, two of them are $\beta_1$ and $\beta_2$.
We denote by $\gamma$ the third one, so that $\partial \mathcal{P} = \beta_1 \cup \beta_2 \cup \gamma$.

\begin{remark}
In both cases, some boundary component of $\mathcal{P}$ (such as $\gamma, \gamma_1$ or $\gamma_2$) may have  zero length. We always consider a puncture to be
a boundary component of length zero.
\end{remark}

The intersection numbers of the three  boundary components of $\mathcal{P}$ with $\mu$ are three positive numbers
satisfying some equation.
To simplify notation,
we will always assume that indices are chosen such that $i(\mu,\gamma_1) \geq i(\mu,\gamma_2)$ in the first case,
and that $i(\mu,\beta_1) \geq i(\mu,\beta_2)$ in the second case.

\begin{figure}[htbp]

\centering

\includegraphics[width=12cm]{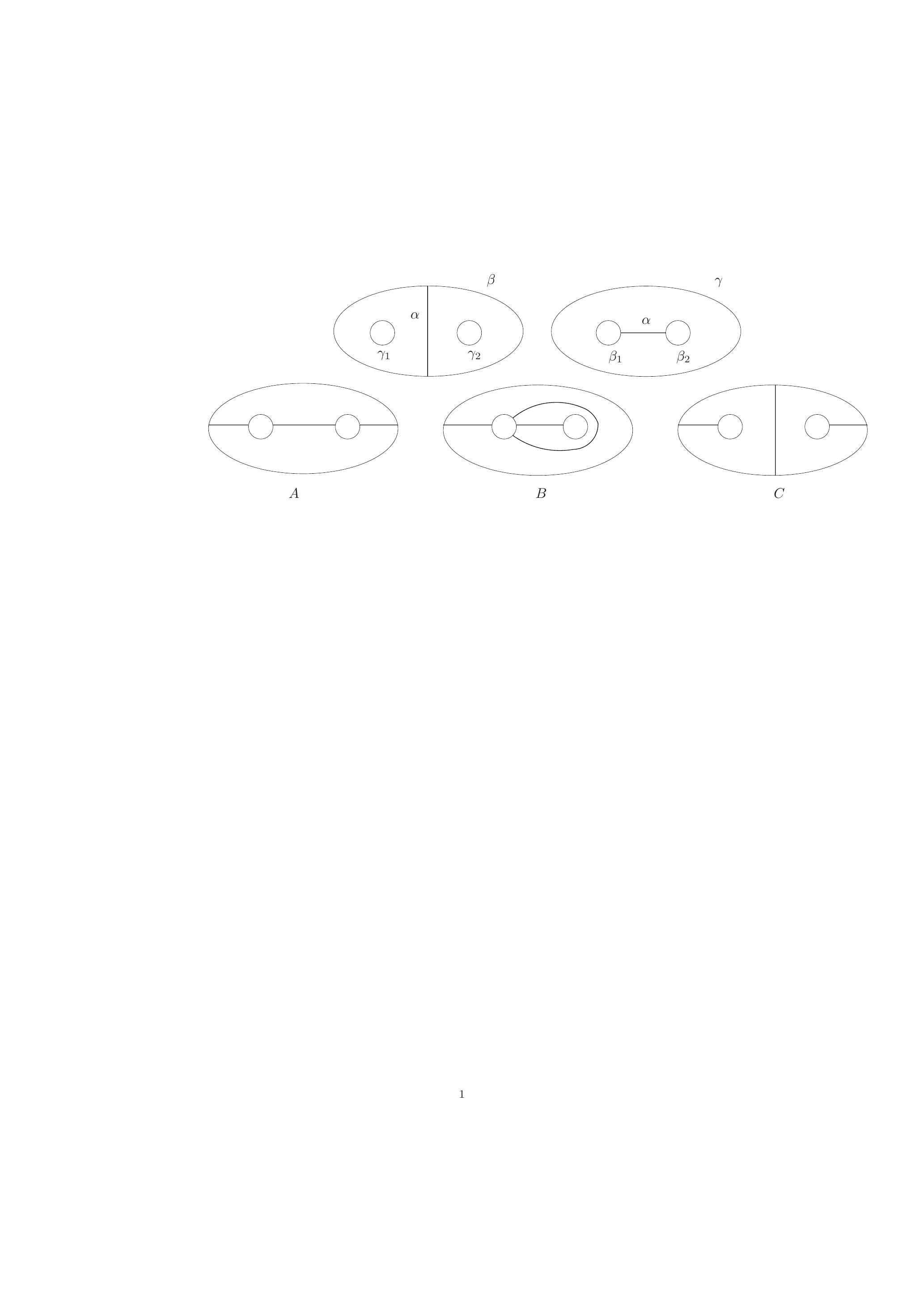}
\caption{The pair of pants containing the arc $\alpha$ falls into two types. For each type, there
are three cases illustrated in (A)-(C).}
\label{fig:pants}
\end{figure}

As indicated on the left of Figure \ref{fig:pants}, the case where $\beta_1 = \beta_2$
is divided into three different subcases.

 \begin{enumerate}[(A)]
 \item the intersection number of $\mu$ with one boundary component of $\mathcal{P}$ is less than the sum of
 the intersection number of $\mu$ with the two others. (That is, the triangle inequality for the triple of intersection numbers holds.)
 \item $i(\mu,\gamma_1)> i(\mu,\beta)+i(\mu,\gamma_2)$.
 \item $i(\mu,\beta)>i(\mu,\gamma_1)+i(\mu,\gamma_2)$.
 \end{enumerate}

In each subcase, we have the following corresponding equation:

 \begin{enumerate}[(A)]
 \item $i(\mu,\alpha)=\frac{1}{2}\left( i(\mu,\gamma_1)+i(\mu,\gamma_2)-i(\mu,\beta)\right)+\omega_\beta$
 \item $i(\mu,\alpha)=i(\mu,\gamma_1)-i(\mu,\beta)+\omega_\beta$.
 \item $i(\mu,\alpha)=0$.
 \end{enumerate}
Here $\omega_\beta$ is the weight of $\beta$ in $\mu^d$.  We clearly have:
$$i(\mu^d,\gamma_1)=i(\mu,\gamma_1), i(\mu^d,\gamma_2)=i(\mu,\gamma_2), i(\mu^d, \beta)=2i(\mu,\beta).$$

Now we give a lower bound of $\ell(\alpha)$ in terms of $\ell(\beta),\ell(\gamma_1),\ell(\gamma_2)$ for all
cases (A)-(C).
We need the following formula, which can be shown by combining the hyperbolic pentagon and hexagon formulae, 
see Buser \cite[\S 2.4]{Buser}.

\begin{align} \label{formula:case1}
\cosh^2\left(\tfrac{1}{2}\ell(\alpha)\right) &= \frac{-1 + \cosh^2\left(\tfrac{1}{2}\ell(\beta)\right) + \cosh^2\left(\tfrac{1}{2}\ell(\gamma_1)\right) + \cosh^2\left(\tfrac{1}{2}\ell(\gamma_2)\right)}{\sinh^2\left(\tfrac{1}{2}\ell(\beta)\right)} + \\
&+ \frac{2 \cosh\left(\tfrac{1}{2}\ell(\beta)\right) \cosh\left(\tfrac{1}{2}\ell(\gamma_1)\right) \cosh\left(\tfrac{1}{2}\ell(\gamma_2)\right)}{\sinh^2\left(\tfrac{1}{2}\ell(\beta)\right)}.   \nonumber
\end{align}

We also need some elementary estimates:

\begin{enumerate}[(i)]
\item For $x\geq 0$, $\frac{1}{2}e^x\leq \cosh(x)\leq e^x ; \frac{1}{4}e^{2x}\leq \cosh^2(x)\leq e^{2x}$.
\item If $x>A>0$, then $$\frac{1}{2}(1-e^{2A})e^x\leq \sinh(x) \leq \frac{1}{2}e^x;$$
 if $0<x<1$, then
$$x<\sinh(x)<2x.$$
\item For each $\gamma\in \mathcal{C}(S^d)$, we have (recalling that we denote by $\ell(\gamma)$ the geodesic
length of $\gamma$ on $\Gamma(t)$)
$$\frac{1}{2}\exp(\frac{1}{2}e^ti(\mu^d,\gamma))\leq \cosh(\frac{1}{2}\ell(\gamma))\leq \exp(\frac{1}{2}\ell(\gamma))\leq \exp(\frac{C_\gamma}{2})\exp(\frac{1}{2}e^ti(\mu^d,\gamma)).$$
\item If $\gamma\in \mathcal{C}(S^d)$ is a leaf of $\mu^d$ (that is, $\omega_\gamma>0$), then
$$\ell(\gamma)\leq 12|\chi(S^d)|\exp(-\frac{1}{2}\omega_\gamma e^t).$$
\end{enumerate}

The inequality in (iii) follows from Lemma \ref{lemma:papa}.
The inequality in (iv) follows from Lemma \ref{lem:theret} and the fact that $1/\sinh(x)\leq 4/e^x$ for $ x>0$.

\bigskip

\textbf{Case (A).} We rewrite Formula \eqref{formula:case1} in the following way:
\begin{equation*}
\cosh^2\left(\frac{1}{2}\ell(\alpha)\right)= 2\coth\left(\frac{1}{2}\ell(\beta)\right) \frac{\cosh\left(\frac{1}{2}\ell(\gamma_1)\right)\cosh\left(\frac{1}{2}\ell(\gamma_2)\right)}{\sinh\left(\frac{1}{2}\ell(\beta)\right)}(1+R_A)
\end{equation*}
where the term $R_A$ is given by

\begin{align*}
\frac{-1 + \cosh^2\left(\tfrac{1}{2}\ell(\beta)\right) + \cosh^2\left(\tfrac{1}{2}\ell(\gamma_1)\right) + \cosh^2\left(\tfrac{1}{2}\ell(\gamma_2)\right)}{2 \cosh\left(\tfrac{1}{2}\ell(\beta)\right) \cosh\left(\tfrac{1}{2}\ell(\gamma_1)\right) \cosh\left(\tfrac{1}{2}\ell(\gamma_2)\right)}
\end{align*}
It is easy to see that that $R_A>0$.

If $\omega_\beta=0$, then $\ell(\beta)\geq B_\beta$ (see lemma \ref{lem:theret}), and we have
$$1<  \coth\left(\tfrac{1}{2}\ell(\beta)\right) \leq  \coth\left(\tfrac{1}{2}B_\beta\right).$$

To give a lower bound for $\ell(\alpha)$, note that

\begin{eqnarray*}
\exp\left(\ell(\alpha)\right) &\geq&  \cosh^2\left(\tfrac{1}{2}\ell(\alpha)\right) \\
&\geq& 2\coth\left(\frac{1}{2}\ell(\beta)\right) \frac{\cosh\left(\frac{1}{2}\ell(\gamma_1)\right)\cosh\left(\frac{1}{2}\ell(\gamma_2)\right)}{\sinh\left(\frac{1}{2}\ell(\beta)\right)}\\
&\geq& 2 \frac{\cosh\left(\frac{1}{2}\ell(\gamma_1)\right)\cosh\left(\frac{1}{2}\ell(\gamma_2)\right)}{\sinh\left(\frac{1}{2}\ell(\beta)\right)} \\
&\geq& \frac{e^{\frac{1}{2}\ell(\gamma_1)}e^{\frac{1}{2}\ell(\gamma_2)}}{e^{\frac{1}{2}\ell(\beta)}} = \exp\left(\frac{\ell(\gamma_1)}{2}+\frac{\ell(\gamma_2)}{2}-\frac{\ell(\beta)}{2}\right).
\end{eqnarray*}
By taking the logarithm of each side and applying Lemma \ref{lemma:papa}, we have

\begin{eqnarray*}
\ell(\alpha) &\geq& \frac{\ell(\gamma_1)}{2}+\frac{\ell(\gamma_2)}{2}-\frac{\ell(\beta)}{2}\\
&\geq& e^t \frac{1}{2}\left( i(\mu,\gamma_1)+i(\mu,\gamma_2)-i(\mu,\beta)\right)-C_\beta \\
&=& e^t i(\mu,\alpha)- C_\beta.
\end{eqnarray*}

If $\omega_\beta>0$, we have $i(\mu,\beta)=0$. Moreover, the length $\ell(\beta)$ is less than $C_\beta$
and it is less than $1$ when $t$ is sufficiently large. As a result, we may assume
(using the second inequality in (ii)) that
$$\sinh\left(\frac{1}{2}\ell(\beta)\right)\leq \ell(\beta).$$
Then we have

\begin{eqnarray*}
\exp\left(\ell(\alpha)\right) &\geq&  \cosh^2\left(\tfrac{1}{2}\ell(\alpha)\right) \\
&\geq& 2\coth\left(\frac{1}{2}\ell(\beta)\right) \frac{\cosh\left(\frac{1}{2}\ell(\gamma_1)\right)\cosh\left(\frac{1}{2}\ell(\gamma_2)\right)}{\sinh\left(\frac{1}{2}\ell(\beta)\right)}\\
&\geq&\frac{\cosh\left(\frac{1}{2}\ell(\gamma_1)\right)\cosh\left(\frac{1}{2}\ell(\gamma_2)\right)}{\sinh^2\left(\frac{1}{2}\ell(\beta)\right)}\\
&\geq& \frac{\frac{1}{2}\exp\left( \frac{1}{2}\ell(\gamma_1)+ \frac{1}{2}\ell(\gamma_2)\right)}{\ell(\beta)^2}.
\end{eqnarray*}
Applying (iv), we get
\begin{eqnarray*}
\exp\left(\ell(\alpha)\right) &\geq&  \frac{\frac{1}{2}\exp\left( \frac{1}{2}\ell(\gamma_1)+ \frac{1}{2}\ell(\gamma_2)\right)}{(12|\chi(S^d)|)^2 \exp\left(-\omega_\gamma e^t \right)}.
\end{eqnarray*}
Taking the logarithm of each side, we have
\begin{eqnarray*}
\ell(\alpha) &\geq&   \frac{1}{2}\ell(\gamma_1)+ \frac{1}{2}\ell(\gamma_2)+\omega_\gamma e^t -\log(288|\chi(S^d)|^2) \\
&\geq& \frac{1}{2} \left( i(\mu,\gamma_1)+i(\mu,\gamma_2)\right)e^t+\omega_\gamma e^t -\log(288|\chi(S^d)|^2)  \\
&=& e^t i(\mu,\alpha)- \log(288|\chi(S^d)|^2).
\end{eqnarray*}

\bigskip

\textbf{Case (B).} We can rewrite formula \ref{formula:case1} in the following way:

\begin{equation*}
\cosh^2\left(\tfrac{1}{2}\ell(\alpha)\right) =  \frac{\cosh^2\left(\tfrac{1}{2}\ell(\gamma_1)\right)}{\sinh^2\left(\tfrac{1}{2}\ell(\beta)\right)}   \left( 1 +  R_B\right)
\end{equation*}
where the term $R_B>0$ is given by

\begin{equation*}
\frac{-1 + \cosh^2\left(\tfrac{1}{2}\ell(\beta)\right) + \cosh^2\left(\tfrac{1}{2}\ell(\gamma_2)\right) + 2 \cosh\left(\tfrac{1}{2}\ell(\beta)\right) \cosh\left(\tfrac{1}{2}\ell(\gamma_1)\right) \cosh\left(\tfrac{1}{2}\ell(\gamma_2)\right)}{ \cosh^2\left(\tfrac{1}{2}\ell(\gamma_1)\right) }
\end{equation*}

Now, if $w_\beta = 0$, we have

\begin{eqnarray*}
\exp(\ell(\alpha))&\geq& \cosh^2\left(\tfrac{1}{2}\ell(\alpha)\right) \\
&\geq& \frac{\cosh^2\left(\tfrac{1}{2}\ell(\gamma_1)\right)}{\sinh^2\left(\tfrac{1}{2}\ell(\beta)\right)} \\
&\geq& \exp\left(\ell(\gamma_1)-\ell(\beta)\right).
\end{eqnarray*}
It follows that

\begin{eqnarray*}
\ell(\alpha)&\geq& \ell(\gamma_1)-\ell(\beta) \\
&\geq& e^t(i(\mu,\gamma_1)-i(\mu,\beta))-C_\beta \\
&=& e^ti(\mu,\alpha)-C_\beta.
\end{eqnarray*}

If, instead, $w_\beta > 0$, we have $i(\mu,\beta)= 0$ and $\ell(\beta)$ converges  to zero as $t$ tends to infinity.
Applying (iv), we have (for $t$ sufficiently large)

\begin{eqnarray*}
\exp(\ell(\alpha))&\geq& \frac{\cosh^2\left(\tfrac{1}{2}\ell(\gamma_1)\right)}{\sinh^2\left(\tfrac{1}{2}\ell(\beta)\right)}  \\
&\geq& \frac{\frac{1}{4}\exp\left(\ell(\gamma_1)\right)}{\left(\ell(\beta)\right)^2} \\
&\geq& \frac{1}{576|\chi(S^d)|^2}\exp\left(\ell(\gamma_1)+\omega_\beta e^t\right).
\end{eqnarray*}
Thus
\begin{eqnarray*}
\ell(\alpha)&\geq& \ell(\gamma_1)+\omega_\gamma e^t-\log \left(576|\chi(S^d)|^2\right)  \\
&\geq& e^t\left(i(\mu,\gamma_1)+\omega_\beta \right)- \log \left(576|\chi(S^d)|^2\right)\\
&=& e^t i(\mu,\alpha)- \log \left(576|\chi(S^d)|^2\right).
\end{eqnarray*}

\bigskip

\textbf{Case (C).} Since $i(\mu,\alpha)=0$, the inequality $\ell(\alpha) \geq i(\mu,\alpha)$ is trivial.

\bigskip

Now we consider the case where $\beta_1\neq \beta_2$. As we did before, we separate
the intersection pattern into three different cases:

 \begin{enumerate}[(A')]
 \item the intersection number of $\mu$ with one boundary component of $\mathcal{P}$ is less than the sum of
 the intersection number of $\mu$ with the two others.
 \item $i(\mu,\beta_1)> i(\mu,\beta_2)+i(\mu,\gamma)$.
 \item $i(\mu,\gamma)>i(\mu,\beta_1)+i(\mu,\beta_2)$.
 \end{enumerate}

Each of the above cases corresponds respectively to

 \begin{enumerate}[(A')]
 \item $i(\mu,\alpha)=\frac{1}{2}\left(\omega_{\beta_1}+\omega_{\beta_2}\right)$.
 \item $i(\mu,\alpha)=\frac{1}{2}\left(\omega_{\beta_1}+\omega_{\beta_2}\right)$.
 \item $i(\mu,\alpha)=\frac{1}{2}\left(i(\mu,\gamma)-i(\mu,\beta_1)-i(\mu,\beta_2)\right)+\omega_{\beta_1}+\omega_{\beta_2}$.
 \end{enumerate}

Recall the following formula:

\begin{equation} \label{formula:case2}
\cosh\left(\ell(\alpha)\right) = \frac{\cosh\left(\tfrac{1}{2}\ell(\gamma)\right) + \cosh\left(\tfrac{1}{2}\ell(\beta_1)\right) \cosh\left(\tfrac{1}{2}\ell(\beta_2)\right)}{\sinh\left(\tfrac{1}{2}\ell(\beta_1)\right)\sinh\left(\tfrac{1}{2}\ell(\beta_2)\right)}.
\end{equation}

\bigskip

\textbf{Case (A') or (B').} We can rewrite formula \eqref{formula:case2} in the following way:

\begin{equation}
\cosh\left(\ell(\alpha)\right) = \coth\left(\tfrac{1}{2}\ell(\beta_1)\right) \coth\left(\tfrac{1}{2}\ell(\beta_2)\right) \left(1 + S_{A,B}\right)
\end{equation}

where the term $S_{A,B}>0$ is given by

\begin{equation*}
\frac{\cosh\left(\tfrac{1}{2}\ell(\gamma)\right)}{\cosh\left(\tfrac{1}{2}\ell(\beta_1)\right) \cosh\left(\tfrac{1}{2}\ell(\beta_2)\right)}.
\end{equation*}

Now, if $w_{\beta_1} = w_{\beta_2} = 0$, we have $i(\mu, \alpha)=0$. Then it is obvious that $\ell(\alpha)\geq i(\mu,\alpha)$.

If $w_{\beta_1} = 0$ and $w_{\beta_2} > 0$, we have  $1 < \coth(\tfrac{1}{2}\ell(\beta_1)) \leq   \coth(\tfrac{1}{2}B_{\beta_1})$;
 while $\ell(\beta_2)$ goes to zero as $t$ tends to infinity.
  We may assume (by considering $t$ sufficiently large) that

  $$\coth(\tfrac{1}{2}\ell(\beta_2)) \geq \frac{1}{\sinh(\tfrac{1}{2}\ell(\beta_2))} \geq \frac{1}{\ell(\beta_2)}.$$
 Applying (iv), we have

\begin{eqnarray*}
\exp\left(\ell(\alpha)\right)&\geq& \cosh\left(\ell(\alpha)\right) \\
&\geq& \frac{1}{\ell(\beta_2)} \\
&\geq& \frac{1}{12|\chi(S^d)|}\exp\left(\frac{1}{2}\omega_{\beta_2} e^t\right).
\end{eqnarray*}
Thus
\begin{eqnarray*}
\ell(\alpha)&\geq& \frac{1}{2}\omega_{\beta_2} e^t-\log \left(12|\chi(S^d)|\right) \\
&=& e^ti(\mu,\alpha)-\log \left(12|\chi(S^d)|\right).
\end{eqnarray*}
The above argument applies also to the case where $w_{\beta_1} > 0$ and $w_{\beta_2} = 0$.

Now if $w_{\beta_1} > 0$ and $w_{\beta_2} > 0$, we have (for $t$ sufficiently large)

  $$\coth(\tfrac{1}{2}\ell(\beta_i)) \geq \frac{1}{\sinh(\tfrac{1}{2}\ell(\beta_i))} \geq \frac{1}{\ell(\beta_i)}, \ i=1,2.$$
 Applying (iv) again, we have

\begin{eqnarray*}
\exp\left(\ell(\alpha)\right)&\geq& \cosh\left(\ell(\alpha)\right) \\
&\geq& \frac{1}{\ell(\beta_1)\ell(\beta_2)} \\
&\geq& \frac{1}{144|\chi(S^d)|^2}\exp\left(\frac{1}{2}\omega_{\beta_1} e^t+\frac{1}{2}\omega_{\beta_2} e^t\right).
\end{eqnarray*}
Thus
\begin{eqnarray*}
\ell(\alpha)&\geq& \frac{1}{2}\left(\omega_{\beta_1}+\omega_{\beta_2} \right)e^t-\log \left(144|\chi(S^d)|^2\right) \\
&=& e^ti(\mu,\alpha)-\log \left(144|\chi(S^d)|^2\right).
\end{eqnarray*}

\textbf{Case (C').} we can rewrite formula \ref{formula:case2} in the following way:

\begin{equation*}
\cosh\left(\ell(\alpha)\right) = \frac{\cosh\left(\tfrac{1}{2}\ell(\gamma)\right)}{\sinh\left(\tfrac{1}{2}\ell(\beta_1)\right)\sinh\left(\tfrac{1}{2}\ell(\beta_2)\right)}  \left(1+ S_C\right)
\end{equation*}

where the term $S_C>0$ is given by

\begin{equation*}
\frac{\cosh\left(\tfrac{1}{2}\ell(\beta_1)\right) \cosh\left(\tfrac{1}{2}\ell(\beta_2)\right)}{\cosh\left(\tfrac{1}{2}\ell(\gamma)\right)}.
\end{equation*}

In this case, we have a lower bound for $\ell(\alpha)$:
\begin{eqnarray*}
\exp(\ell(\alpha))&\geq& \cosh\left(\ell(\alpha)\right) \\
&\geq & \frac{\cosh\left(\tfrac{1}{2}\ell(\gamma)\right)}{\sinh\left(\tfrac{1}{2}\ell(\beta_1)\right)\sinh\left(\tfrac{1}{2}\ell(\beta_2)\right)}.
\end{eqnarray*}
By comparing the above inequality with the estimates in Case (A)-(C) and using a similar argument,
one can show that $\ell(\alpha)$ is larger than
$$e^t \left(\frac{1}{2}\left(i(\mu,\gamma)-i(\mu,\beta_1)-i(\mu,\beta_2)\right)+\omega_{\beta_1}+\omega_{\beta_2}\right)$$
up to some constant (only depending on $\beta_1,\beta_2$). We omit the details.

\bigskip

We arrive to the following conclusion:

\bigskip

\emph{For any  $\alpha\in \mathcal{A}$, considered as a geodesic arc in $\Gamma(t)$ connecting two simple closed geodesics $\beta_1,\beta_2$,
there are constants $C,T>0$ (depending on $\beta_1$ and $\beta_2$) such that when $t>T$,
$\ell_\alpha(\Gamma(t))\geq e^ti(\mu,\alpha)-C.$}

\begin{lemma}\label{lem:lower}
There is a constant $C>0$ depending only on the stretch line such that for all $\alpha\in \mathcal{A}$,
$$e^ti(\mu,\alpha)-C \leq \ell_\alpha(\Gamma(t)).$$
\end{lemma}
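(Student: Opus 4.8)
The plan is to read the lemma off from the pair-of-pants estimates just established, the only new ingredient being a uniformity check on the constants. First I would recall that the geodesic representative of any $\alpha\in\mathcal{A}$ on $\Gamma(t)$ joins two of the simple closed geodesics homotopic to the boundary components $\beta_1,\dots,\beta_p$ of $S$, so $\alpha$ falls under exactly one of the cases (A)--(C) (when these two curves coincide) or (A')--(C') (when they are distinct). Inspecting those cases, the additive constant appearing in $\ell_\alpha(\Gamma(t))\ge e^ti(\mu,\alpha)-C$ is assembled only from $|\chi(S^d)|$, from the chosen pair of boundary curves $\beta_1,\beta_2$, from their weights $\omega_{\beta_1},\omega_{\beta_2}$ in $\mu^d$, and from the constants $B_{\beta_i},C_{\beta_i}$ attached to $\beta_1,\beta_2$ by Lemmas \ref{lemma:papa} and \ref{lem:theret}. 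Crucially it does \emph{not} depend on $\alpha$: the other cuff curves $\gamma,\gamma_1,\gamma_2$ of $\mathcal{P}$ (which do move with $\alpha$, and are in general interior curves of $S^d$, not boundary curves of $S$) enter the estimates only through the additive-constant-free inequality $e^ti(\mu,\gamma)\le\ell_\gamma(\Gamma(t))$ of Lemma \ref{lemma:papa}. Since $\partial S$ has only finitely many components, there are only finitely many unordered pairs $\{\beta_i,\beta_j\}$; taking the largest of the corresponding constants gives one $C_1>0$, and the largest of the corresponding thresholds one $T\ge 0$, with $\ell_\alpha(\Gamma(t))\ge e^ti(\mu,\alpha)-C_1$ for all $\alpha\in\mathcal{A}$ and all $t>T$.

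Next I would dispose of the bounded range $t\in[0,T]$, which is softer because $e^t\le e^T$ there. Going back over the estimates, the subcases that forced a threshold were exactly those in which some $\beta_i$ carries positive weight $\omega_{\beta_i}>0$ in $\mu^d$ (so that $\ell_{\beta_i}(\Gamma(t))\to 0$), and the only quantity one fails to recover without the threshold in such a subcase is a fixed multiple of $e^t\omega_{\beta_i}$; as $\mu$ is fixed, this is bounded on $[0,T]$ by $e^T\max_i\omega_{\beta_i}$. Enlarging $C_1$ by this amount produces a constant $C>0$ depending only on the stretch line with $\ell_\alpha(\Gamma(t))\ge e^ti(\mu,\alpha)-C$ for every $\alpha\in\mathcal{A}$ and every $t\ge 0$. (If one prefers not to carry out this last step, one may simply state the lemma for $t$ larger than the threshold $T$ and absorb $T$ later, when the path of Lemma \ref{lemma:key} is reparametrized; the two versions are equivalent for what follows.)

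I expect the only genuinely delicate point to be the claim used in the first paragraph that the auxiliary pants curves $\gamma,\gamma_1,\gamma_2$ occur only inside additive-constant-free lower bounds for $\ell_\gamma(\Gamma(t))$: this is precisely what makes $C$ independent of $\alpha$, and if instead the additive constants $C_{\gamma_i}$ of Lemma \ref{lemma:papa} intervened they could be unboundedly large over $\alpha\in\mathcal{A}$ and the lemma would be false as stated. Everything else is the bookkeeping assembly of the trigonometric inequalities built from Formulas \eqref{formula:case1} and \eqref{formula:case2} that have already been derived, so I anticipate no further obstacle.
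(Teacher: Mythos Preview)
Your argument is correct and follows the same route as the paper: the additive constants from the case-by-case analysis depend only on the pair $\{\beta_1,\beta_2\}$ of boundary components of $S$ (not on the auxiliary cuffs $\gamma,\gamma_1,\gamma_2$, which enter only via the constant-free half of Lemma \ref{lemma:papa}), and since $\partial S$ has finitely many components one takes the maximum. Your treatment is in fact more careful than the paper's, which simply invokes this finiteness without isolating the independence from $\gamma,\gamma_1,\gamma_2$ and without discussing the bounded range $t\in[0,T]$; your handling of the latter (enlarging $C$ by $e^T\max_i\omega_{\beta_i}$, or alternatively reparametrizing) fills a small gap that the paper leaves implicit.
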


\begin{proof}
Since there are finitely many choices of the pair $\beta_1,\beta_2$ (note that
here $\beta_1,\beta_2$ are boundary components of $S$), we can choose a uniform constant $C$ such that the above
conclusion holds for all $\ell_\alpha(\Gamma(t))$.
\end{proof}

\begin{remark}
One can apply the above argument to give a upper bound for $\ell_\alpha(\Gamma(t))$. That is, one can show that for each $\alpha\in \mathcal{A}$,
there is a constant $C_\alpha>0$ (depending on $\alpha$) such that
\begin{equation}\label{eq:upperbound}
\ell_\alpha(\Gamma(t))\leq e^ti(\mu,\alpha)+C_\alpha.
\end{equation}
To avoid long calculations, we will adopt an indirect method to certify \eqref{eq:upperbound} in the next section.
\end{remark}

\begin{remark}
Our method is close in spirit to \cite[Expos\'e 6, Appendix D]{FLP}. It can be adapted to the case of a general measured lamination and general stretch line,
by specifying an appropriate definition for the intersection number between a measured lamination  and an arc.
\end{remark}
\subsection{Key inequality}

Let $\Gamma(t)$ be a stretch line in $\mathcal{T}(S^d)$ as we have constructed above.
If we restrict each hyperbolic structure $\Gamma(t)$ to the subsurfaces $S$ and $\overline{S}$, then we have
two families of hyperbolic structures on $\mathcal{T}(S)$ and $\mathcal{T}(\overline{S})$, respectively.
We call them $\Gamma_{U}(t)$ and $\Gamma_L(t)$.

It follows directly from Lemma \ref{lem:lower} that there is a constant $C>0$ such that
for any $\alpha\in \mathcal{A}$,

$$e^t i(\mu, \alpha)-C\leq \ell_{\alpha}(\Gamma_{U}(t)),$$
and
 $$e^t i(\bar{\mu}, \bar{\alpha})-C \leq \ell_{\bar{\alpha}}(\Gamma_{L}(t)).$$

  Note that the above inequalities also hold for any simple closed curve $\alpha\in \mathcal{C}$. In this case,
  we can take $C=0$ (this is a consequence of Lemma \ref{lemma:papa}).

Denote by $\overline{\Gamma}_{U}(t)$ and $\overline{\Gamma}_L(t)$  the mirror images of
 $\Gamma_{U}(t)$ and $\Gamma_L(t)$ respectively. Note that $\overline{\Gamma}_{U}(t)\subset \mathcal{T}(\overline{S})$
 and
 $\overline{\Gamma}_L(t)\subset \mathcal{T}(S)$.

 \begin{lemma}\label{lem:left}
 With the above notation, for any $\alpha\in \mathcal{C}\cup \mathcal{A}$, the following
 inequalities hold:

 \begin{equation}
 \begin{cases}
 e^t i(\mu, \alpha)-C\leq \ell_{\alpha}(\Gamma_{U}(t)) \\
  e^t i(\mu, \alpha)-C\leq \ell_{\bar{\alpha}}(\overline{\Gamma}_{U}(t)) \\
 e^t i(\mu, \alpha)-C\leq \ell_{\bar{\alpha}}(\Gamma_{L}(t)) \\
 e^t i(\mu, \alpha)-C \leq \ell_{\alpha}(\overline{\Gamma}_{L}(t)). \\
 \end{cases}
 \end{equation}
 \end{lemma}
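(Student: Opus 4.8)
The plan is to deduce all four inequalities from the two estimates that have just been established, namely
\[ e^t i(\mu,\alpha) - C \le \ell_\alpha(\Gamma_U(t)) \qquad\text{and}\qquad e^t i(\bar\mu,\bar\alpha) - C \le \ell_{\bar\alpha}(\Gamma_L(t)), \]
valid for every $\alpha \in \mathcal{C}\cup\mathcal{A}$: for arcs these come from Lemma \ref{lem:lower} applied to the restrictions of $\Gamma(t)$ to $S$ and to $\overline{S}$ (with $C$ taken uniform over the finitely many pairs of boundary components), and for simple closed curves one may take $C=0$ by Lemma \ref{lemma:papa}. The first of these is precisely the first line of the statement, so only the remaining three lines need attention, and each of them is a formal consequence of the naturality of geodesic length and of geometric intersection number under the mirror-image involution $J$ of $S^d$.

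First I would record the relevant naturality facts. The map $J$ restricts to an (orientation-reversing) isometry from the hyperbolic structure $\Gamma_U(t)$ on $S$ onto its mirror image $\overline{\Gamma}_U(t)$ on $\overline{S}$, carrying the homotopy class $\alpha$ to $\bar\alpha = J(\alpha)$; since geodesic length is invariant under isometry this gives $\ell_{\bar\alpha}(\overline{\Gamma}_U(t)) = \ell_\alpha(\Gamma_U(t))$, and likewise $\ell_\alpha(\overline{\Gamma}_L(t)) = \ell_{\bar\alpha}(\Gamma_L(t))$. Moreover $J$ is a homeomorphism of $S^d$ that preserves geometric intersection numbers and sends $\mu$ to $\bar\mu$ and $\alpha$ to $\bar\alpha$, so $i(\bar\mu,\bar\alpha) = i(\mu,\alpha)$.

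Putting these together: the second inequality follows from the first by substituting $\ell_{\bar\alpha}(\overline{\Gamma}_U(t)) = \ell_\alpha(\Gamma_U(t))$; the third follows from the estimate $e^t i(\bar\mu,\bar\alpha) - C \le \ell_{\bar\alpha}(\Gamma_L(t))$ after rewriting $i(\bar\mu,\bar\alpha) = i(\mu,\alpha)$; and the fourth follows from the third by substituting $\ell_\alpha(\overline{\Gamma}_L(t)) = \ell_{\bar\alpha}(\Gamma_L(t))$. There is essentially no genuine obstacle here: the only care required is the bookkeeping of which of $S$, $\overline{S}$, $S^d$ each hyperbolic structure and each curve or arc lives on, together with the observation that both length and intersection number transform in the obvious way under $J$.
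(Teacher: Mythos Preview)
Your proposal is correct and follows essentially the same approach as the paper. The paper does not give a separate proof of this lemma: it simply states the first and third inequalities as direct consequences of Lemma~\ref{lem:lower} (for arcs) and Lemma~\ref{lemma:papa} (for closed curves), introduces the mirror images $\overline{\Gamma}_U(t)$ and $\overline{\Gamma}_L(t)$, and then records the four inequalities, leaving to the reader precisely the bookkeeping under the involution $J$ that you have written out explicitly.
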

Lemma \ref{lem:left} provides a lower bound of the geodesic length of a simple closed curve
or simple arc on $S$ along the path $\Gamma_{U}(t)$. In the following, we will give an upper bound.

 Consider $\alpha\in \mathcal{C}\cup \mathcal{A}$. Denote by $\alpha^d$ the double of
 $\alpha$. Then $\alpha^d$ is either a simple closed curve or the union of two symmetric simple closed curves on $S^d$. Using Lemma \ref{lemma:papa},
 we have a constant $C_\alpha$ such that

 $$\ell_{\alpha^d}(\Gamma(t))\leq e^t i(\mu^d,\alpha^d)+C_\alpha.$$

Note that the sum of the lengths of the two arcs $\ell_{\alpha}(\Gamma(t))$ and $\ell_{\bar{\alpha}}(\Gamma(t))$
is less than  $\ell_{\alpha^d}(\Gamma(t))$. It follows that

\begin{eqnarray*}
\ell_{\alpha}(\Gamma(t))+ \ell_{\bar{\alpha}}(\Gamma(t)) &\leq&   e^t i(\mu^d,\alpha^d)+C_\alpha \\
&=& 2 e^t i(\mu,\alpha) + C_\alpha.
\end{eqnarray*}
Combining the above inequalities with Lemma \ref{lem:left}, we have

\begin{equation}\label{eq:upper}
\begin{cases}
\ell_{\alpha}(\Gamma_U(t))=\ell_{\alpha}(\Gamma(t))\leq e^ti(\mu,\alpha)+C_\alpha+C \\
\ell_{\bar{\alpha}}(\Gamma_L(t))=\ell_{\bar{\alpha}}(\Gamma(t))\leq e^ti(\mu,\alpha)+C_\alpha+C. \\
\end{cases}
\end{equation}

We summarize the above in the following key lemma,
which is a generalization of \cite[Lemma 4.9]{Papa}.

\begin{lemma}\label{lemma:key}
There exists a path $X_t, t\in [0,+\infty)$ in $\mathcal{T}(S)$  such that
each  $\alpha\in \mathcal{C}\cup \mathcal{A}$ satisfies
$$e^{t} i(\mu,\alpha)-C \leq \ell_\alpha(X_t) \leq e^{t} i(\mu,\alpha)+C_\alpha,$$
where $C\geq 0$ is a uniform contant and $C_\alpha>0$ is a constant depending only on $\alpha$.
When $\alpha\in  \mathcal{C}$, we can take $C=0$.
\end{lemma}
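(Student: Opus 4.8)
The plan is to let the path $X_t$ be $\Gamma_U(t)$, the restriction to the subsurface $S$ of the stretch line $\Gamma(t)$ in $\mathcal{T}(S^d)$ built at the beginning of this section (directed by a complete geodesic lamination $\lambda$ with no closed leaves, totally transverse to $\mu^d$, converging to $\mu^d$ in the positive direction). Almost all of the work has already been done in Lemmas \ref{lem:lower} and \ref{lem:left} and in the estimate \eqref{eq:upper}; what remains is to package these pieces into the two-sided inequality, so the proof I would write is short and consists of quoting the right statements.

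First, for the lower bound, I would invoke Lemma \ref{lem:left}: for every $\alpha\in\mathcal{C}\cup\mathcal{A}$ one has $e^t i(\mu,\alpha)-C\leq \ell_\alpha(\Gamma_U(t))=\ell_\alpha(X_t)$ with a single constant $C$ uniform in $\alpha$ and $t$. When $\alpha\in\mathcal{C}$, $\alpha$ is again a simple closed curve on $S^d$ with $i(\mu^d,\alpha)=i(\mu,\alpha)$, so applying Lemma \ref{lemma:papa} directly on $S^d$ gives $e^t i(\mu,\alpha)\leq \ell_\alpha(\Gamma(t))=\ell_\alpha(X_t)$, i.e.\ one may take $C=0$ in that case.

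Next, for the upper bound, I would use the doubling argument recorded in \eqref{eq:upper}. For $\alpha\in\mathcal{C}\cup\mathcal{A}$ its double $\alpha^d$ is either a simple closed curve or a pair of symmetric simple closed curves on $S^d$, with $i(\mu^d,\alpha^d)=2i(\mu,\alpha)$; Lemma \ref{lemma:papa} then gives $\ell_{\alpha^d}(\Gamma(t))\leq 2e^t i(\mu,\alpha)+C_\alpha$. Since the geodesic representative of $\alpha^d$ on $\Gamma(t)$ decomposes into the part lying in $S$ and the part lying in $\overline S$, we have $\ell_\alpha(\Gamma(t))+\ell_{\bar\alpha}(\Gamma(t))\leq \ell_{\alpha^d}(\Gamma(t))$; combining this with the lower bound $e^t i(\mu,\alpha)-C\leq \ell_{\bar\alpha}(\Gamma(t))$ (third line of Lemma \ref{lem:left}) yields $\ell_\alpha(X_t)=\ell_\alpha(\Gamma(t))\leq e^t i(\mu,\alpha)+C_\alpha+C$, and renaming $C_\alpha+C$ as $C_\alpha$ completes the estimate.

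The only genuine obstacle in all of this is the lower bound for arcs, namely Lemma \ref{lem:lower}: that is where the pair-of-pants trigonometry (Formulae \eqref{formula:case1} and \eqref{formula:case2}), the case split (A)--(C) and (A$'$)--(C$'$), and the elementary hyperbolic estimates (i)--(iv) all come in, together with the subtlety that $\Gamma(t)$ need not be symmetric so that the lower bound for arcs carries a genuinely necessary constant $C$. Once Lemma \ref{lem:lower} is granted, the assembly above is routine; the one point requiring a little care is the identity $i(\mu^d,\alpha^d)=2i(\mu,\alpha)$ (and the degenerate cases in which $\alpha$, or a boundary component of the relevant pair of pants, is itself a boundary component of $S$, which are handled by treating a puncture as a boundary component of length zero, as in the Remark preceding Formula \eqref{formula:case1}).
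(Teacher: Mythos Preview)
Your proposal is correct and essentially identical to the paper's approach: the paper also presents Lemma~\ref{lemma:key} as a summary of the preceding work, taking $X_t=\Gamma_U(t)$, quoting Lemma~\ref{lem:left} (and Lemma~\ref{lemma:papa} for the case $\alpha\in\mathcal{C}$ with $C=0$) for the lower bound, and deriving the upper bound via the doubling argument leading to~\eqref{eq:upper}. One tiny slip: the identity $i(\mu^d,\alpha)=i(\mu,\alpha)$ fails when $\alpha$ is a boundary component of $S$ (there one has $i(\mu^d,\alpha)=2\,i(\mu,\alpha)$, as the paper notes just before Case~(A)), but this only strengthens the lower bound you want, so the conclusion is unaffected.
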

Note that the path $X_t$ converges to the point $\mu$ in Thurston's compactification.

\section{Proof of Theorem \ref{thm:main}}\label{sec:main}
\subsection{Ergodic decomposition of a measured lamination} Before we prove the main theorem, we need a generalization of \cite[Lemma 6.4]{Walsh}.

Recall that each measured lamination $\mu$ on $S$ can be decomposed into a finite union of
components, each of which is either a simple closed geodesic, a simple geodesic arc
or a minimal component (each half-leaf is dense).

A  measured lamination $\mu$ is said to be \emph{uniquely ergodic} if the
transverse measure of $\mu$ is the unique measure on the same support up
to a scalar multiple.

More generally, let $\mu$ be an arbitrary  minimal measured lamination on $S$. There exist finitely many invariant transverse
measures $ \mu_1,\cdots ,\mu_p$ on $\mu$  such that
\begin{enumerate}[$\bullet$]
\item $\mu_i$ is ergodic for each $i$.
\item Any invariant transverse measure $\nu$ on $\mu$  can be written as $\nu =\sum_i a_i\mu_i $ for $a_i\geq 0$.
\end{enumerate}
It follows that any measured lamination $\mu$ has a unique decomposition as
$$\mu=\sum a_j\mu_j, a_j\geq 0$$
where each $\mu_j$ is either a simple closed curve, a simple geodesic arc or a minimal geodesic lamination with an ergodic measure.
Such a decomposition is called the \emph{ergodic decomposition} of $\mu$, se e.g. \cite{LM}. 

The following lemma is proved by Walsh \cite[Lemma 6.4]{Walsh} for
surfaces without boundary. His proof works as well for surfaces with boundary.

\begin{lemma}\label{lem:ratio}
Let $\mu=\sum_j \mu_j$ be the ergodic decomposition of $\mu\in \mathcal{PML}$. Then
$$\sup_{\gamma\in \mathcal{C}\cup \mathcal{A}} \frac{i(\nu,\gamma)}{i(\mu,\gamma)}=\max\{f_j\}$$
if $\nu=\sum_j f_j\mu_j$. If $\nu$ cannot expressed as $\sum_j f_j\mu_j, f_j\geq 0$, then the supremum is $+\infty$.
\end{lemma}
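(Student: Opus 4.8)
The plan is to follow Walsh's proof of \cite[Lemma 6.4]{Walsh} and to check that the presence of arcs, and of boundary components, causes no new difficulty. Write $f_{\max}=\max_{j}f_{j}$, attained at $j=k$. The components of an ergodic decomposition have pairwise disjoint leaves, so the geodesic representative (on a fixed hyperbolic structure) of any $\gamma\in\mathcal{C}\cup\mathcal{A}$ realizes its intersection number with all the $\mu_{j}$ simultaneously; hence intersection number is additive along the decomposition, $i(\mu,\gamma)=\sum_{j}i(\mu_{j},\gamma)$, and likewise $i(\nu,\gamma)=\sum_{j}f_{j}\,i(\mu_{j},\gamma)$ when $\nu=\sum_{j}f_{j}\mu_{j}$. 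This already gives the upper bound
\[
\frac{i(\nu,\gamma)}{i(\mu,\gamma)}=\frac{\sum_{j}f_{j}\,i(\mu_{j},\gamma)}{\sum_{j}i(\mu_{j},\gamma)}\le f_{\max}\qquad\text{for every }\gamma .
\]

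For the reverse inequality it is enough, given $\varepsilon>0$, to produce $\gamma\in\mathcal{C}\cup\mathcal{A}$ with $i(\mu_{j},\gamma)\le\varepsilon\, i(\mu_{k},\gamma)$ for all $j\neq k$: then $i(\nu,\gamma)\ge f_{k}\,i(\mu_{k},\gamma)$ and $i(\mu,\gamma)\le(1+N\varepsilon)\,i(\mu_{k},\gamma)$, with $N$ the number of components, so the ratio is at least $f_{k}/(1+N\varepsilon)$, and letting $\varepsilon\to 0$ finishes this case. First I would dispose of the situation where $\mathrm{supp}(\mu_{k})$ is disjoint from every $\mathrm{supp}(\mu_{j})$, $j\neq k$ — in particular whenever $\mu_{k}$ is a simple closed curve or a simple arc not shared with another component: here one can take $\varepsilon=0$, because cutting $S$ along the compact lamination $\bigcup_{j\neq k}\mathrm{supp}(\mu_{j})$ and arguing in the piece containing $\mathrm{supp}(\mu_{k})$ one finds, by a standard surface-topology argument (as in \cite[Expos\'e 6]{FLP}), a curve or arc $\gamma$ with $i(\mu_{k},\gamma)>0$ and $i(\mu_{j},\gamma)=0$ for $j\neq k$; the availability of arcs is what makes this work uniformly near $\partial S$.

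The essential case, and the step I expect to be the main obstacle, is when $\mathrm{supp}(\mu_{k})$ is a minimal lamination $L$ which is also the support of some of the other $\mu_{j}$, say of exactly those with $j\in J\ni k$. Then $\{\mu_{j}\}_{j\in J}$ are the distinct ergodic invariant transverse measures on $L$, hence pairwise mutually singular on any arc transverse to $L$. I would reproduce Walsh's construction: fix a transversal $T$ to $L$; by mutual singularity and regularity of Borel measures there is a finite union of subintervals $U\subseteq T$ with $\mu_{k}(T\setminus U)$ and $\mu_{j}(U)$, $j\in J\setminus\{k\}$, all small. By Birkhoff's ergodic theorem a $\mu_{k}$-generic leaf of $L$ equidistributes with respect to $\mu_{k}$, so taking a long subsegment of such a leaf whose returns to $T$ lie predominantly in $U$, and closing it up by a short arc (arranged to keep $\gamma$ simple), one obtains a simple closed curve $\gamma$ whose crossings with $L$ inside $U$ grow linearly with a multiplier close to $\mu_{k}(U)$, while its crossings outside $U$ are controlled by $\mu_{j}(T\setminus U)$; pairing with $\mu_{j}$ against $\mu_{k}$ then yields $i(\mu_{j},\gamma)/i(\mu_{k},\gamma)<\varepsilon$ for $j\in J\setminus\{k\}$, and $\gamma$ may in addition be kept disjoint from the supports of the components with $j\notin J$ (or their bounded contribution absorbed into the error, since $i(\mu_{k},\gamma)\to\infty$). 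Nothing in this step is particular to closed surfaces — it all takes place in the interior of $S$ — so it transfers verbatim; one may alternatively invoke the ergodic-decomposition results of \cite{LM}.

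Finally, if $\nu$ is not of the form $\sum_{j}f_{j}\mu_{j}$ with $f_{j}\ge 0$, then, since on each minimal sublamination the ergodic measures exhaust all transverse measures, necessarily $\mathrm{supp}(\nu)\not\subseteq\mathrm{supp}(\mu)$. If some component of $\nu$ has support disjoint from $\mathrm{supp}(\mu)$, I would pick $\gamma\in\mathcal{C}\cup\mathcal{A}$ meeting that component and disjoint from $\mathrm{supp}(\mu)$, so that $i(\mu,\gamma)=0<i(\nu,\gamma)$ and the ratio is already $+\infty$. Otherwise some component of $\nu$ crosses $\mu$, so $i(\nu,\mu)>0$, and choosing $\gamma_{n}\in\mathcal{C}\cup\mathcal{A}$ with $i(\mu,\gamma_{n})\to 0$ (such $\gamma_{n}$ exist by the argument recalled in Remark \ref{remark:disjoint}) while $i(\nu,\gamma_{n})$ stays bounded below — because such $\gamma_{n}$ become nearly parallel to $\mu$ — the ratio $i(\nu,\gamma_{n})/i(\mu,\gamma_{n})$ tends to $+\infty$. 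In either case the supremum is $+\infty$.
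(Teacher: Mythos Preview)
Your proposal is correct and follows essentially the same approach as the paper, which does not give an independent argument but simply cites \cite[Lemma~6.4]{Walsh} and asserts that Walsh's proof carries over to surfaces with boundary. Your write-up in fact supplies more detail than the paper does, and the one genuinely new observation you need---that the minimal-lamination step takes place entirely in the interior of $S$, while the remaining cases are handled by the availability of arcs near $\partial S$---is exactly what justifies the paper's one-line claim.
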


\subsection{$\Phi$ is injective}
We will use our construction in Section \ref{sec:inequality}
and some observations on the fine structure of the measured lamination $\mu$.

\begin{figure}[htbp]

\centering

\includegraphics[width=11cm]{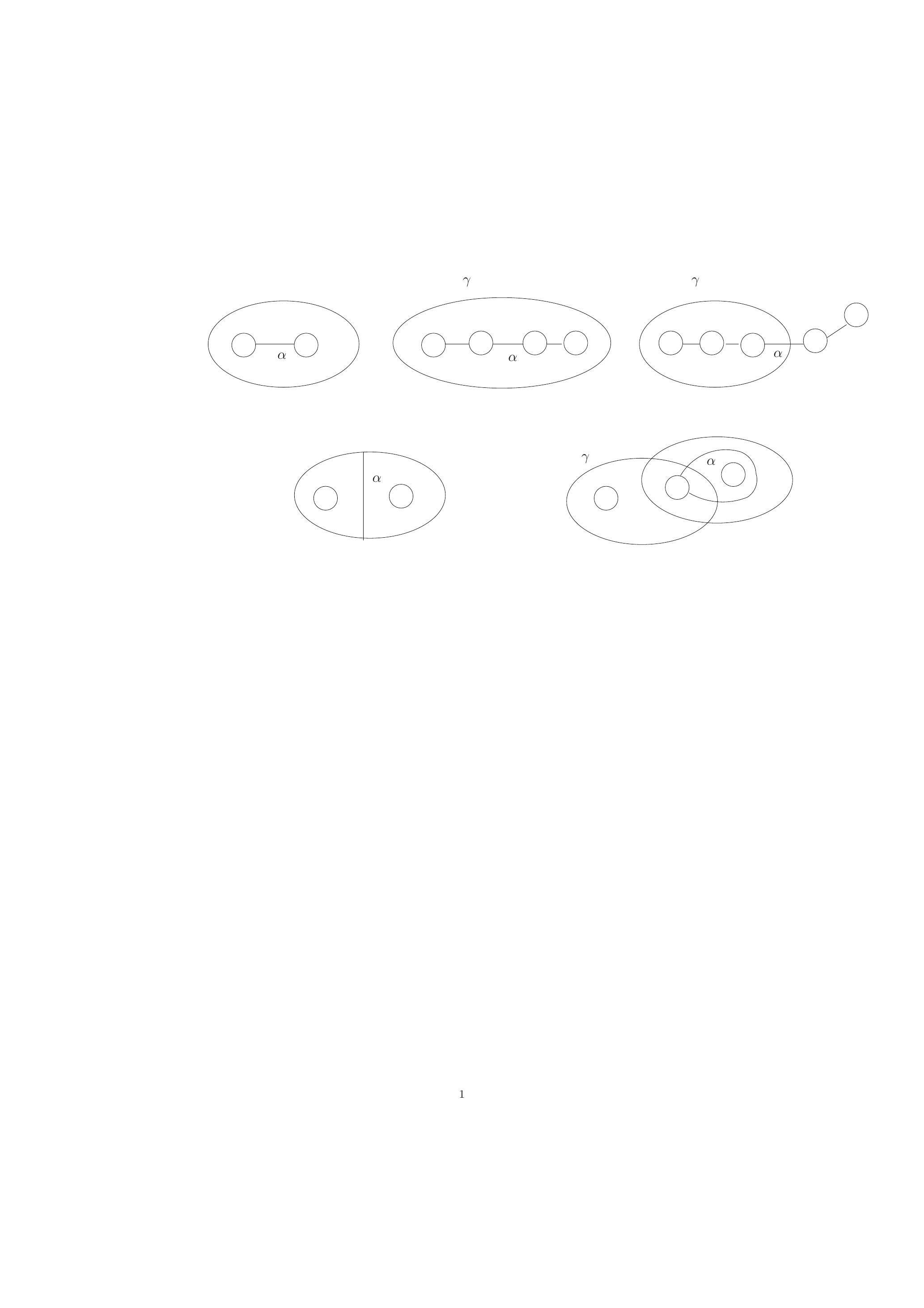}

\caption{The measured lamination $\hat\mu$ is an extension of $\mu$ such that any
geodesic arc on $S$ not contained in $\hat\mu$ is transverse to some simple leaf of $\hat\mu$.}
\label{fig:enlarge}
\end{figure}

Let $\mu=\sum \mu_i$ be the ergodic decomposition of $\mu$. We choose a measured lamination $\hat{\mu}$
which contains $\mu$ as a sublamination, by using the following steps:

\begin{enumerate}[(I)]
\item If $\beta$ is a boundary component of $S$ disjoint from $\mu$, then we add $\beta$
to $\mu$. We get a measured lamination $\mu_0$ such that
$$\mu_0= \mu +\sum_{j=1}^m \beta_j$$
where $\beta_j, j=1,\cdots,m$ are boundary components of $S$ which are disjoint from $\mu$.

\item By definition, $S$ has $p$ ($p\geq m$) boundary components. The numbering is such that for each
$m+1\leq j\leq p$, there is at least an arc contained in $\mu$ that intersects $\beta_j$.
We construct a new measured lamination $\mu_1$ by adding to $\mu_0$ an arc $\alpha_1$ disjoint from $\mu_0$
(if such an arc exists). Inductively, we construct a new measured lamination $\mu_j$ by adding to $\mu_{j-1}$ an arc $\alpha_j$ disjoint from $\mu_{j-1}$. After a finite number of steps, we get a measured lamination $\mu_k$ with the following property:

\begin{quote}
\emph{any arc $\alpha\in \mathcal{A}$ not contained in $\mu_k$ either intersects a simple leaf (an arc or boundary component) of $\mu_k$
or intersect $\mu$.}
\end{quote}

\item By cutting the surface $S$ along all the arcs contained in $\mu_k$, we get a finite union of connected components,
each of which is either a polygon (may be a punctured polygon) or a surface with  piecewise geodesic boundary components.
Let $G$ be a such a component with  piecewise geodesic boundaries.
Let $C$ be a boundary component of $G$. Then $C$ is either a simple closed geodesic contained in $\mu_k$
or a finite concatenations
of geodesic arcs.

In the latter case,
 each geodesic segment of $C$ either comes from an arc in $\mu_k$ (contained as a leaf) or
 a  boundary component of $S$. Note that $C$ is homotopic to a simple closed curve $\gamma$
on $S$, and the geodesic representation of  $\gamma$ is contained in or disjoint from  $\mu_k$.
We will add such a $\gamma$ to $\mu_k$.
The resulting measured lamination, denote by $\hat\mu$ is the one we want.
Note that the choice of $\hat\mu$ is not necessarily unique.
\end{enumerate}

It follows from the above construction that \emph{ for any $\alpha\in \mathcal{A}\cup \mathcal{B}$, either $\alpha$
is a leaf of $\hat\mu$ or $\alpha$ intersects a simple leaf of $\hat\mu$.}

\remark{As we will see later, the refinement (or enlargement) $\hat\mu$ of $\mu$ has better properties when we stretch along 
a complete geodesic lamination on $S^d$ transverse to $\hat\mu^d$, the double of $\hat\mu$. Going forward the stretch line,
any simple arc or boundary curve that intersects $\hat\mu^d$   becomes uniformly large.}

\begin{proposition}\label{pro:injective}
The map \begin{eqnarray*}
\Phi:\overline{\mathcal{T}(S)}&\mapsto& C(\mathcal{T}(S)), \\
 Z&\mapsto& \Phi_Z
\end{eqnarray*} is injective.
\end{proposition}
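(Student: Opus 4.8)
The plan is to show that $\Phi$ separates points of $\overline{\mathcal{T}(S)} = \mathcal{T}(S)\cup\mathcal{PML}(S)$ by treating the three possible cases for a pair $Z, Z'$: both in $\mathcal{T}(S)$, one in $\mathcal{T}(S)$ and one in $\mathcal{PML}(S)$, and both in $\mathcal{PML}(S)$. The first case is the easiest: if $X, X'\in\mathcal{T}(S)$ and $\Phi_X = \Phi_{X'}$ as functions on $\mathcal{T}(S)$, then evaluating at $X$ and at $X'$ and using the definition $\Phi_X(Y) = d(Y,X) - d(X_0,X)$ together with the non-degeneracy of the arc metric from Proposition~\ref{prop:equality} (namely $d(X,X')=d(X',X)=0 \iff X=X'$) forces $X = X'$. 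For the mixed case, one uses that a horofunction associated to a point $\mu\in\mathcal{PML}(S)$ is a limit of functions $\Phi_{X_n}$ with $X_n\to\mu$, hence unbounded below along $\mathcal{T}(S)$ in a way that the finite-point horofunctions $\Phi_X$ are not (for fixed $X$, $\Phi_X(Y) \geq -d(X_0,X) + d(Y,X_0) - \ldots$ is bounded below on bounded sets, and one checks the two types cannot coincide); alternatively, since $\Phi$ restricted to $\mathcal{T}(S)$ is already known to be an embedding, it suffices to rule out that a boundary horofunction equals an interior one, and this follows because the boundary horofunctions are genuine (non-interior) points of the compactification $\overline{\Phi(\mathcal{T}(S))}$.

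The heart of the matter is the third case: showing that if $\mu, \mu'\in\mathcal{PML}(S)$ with $\Phi_\mu = \Phi_{\mu'}$, then $\mu = \mu'$ in $\mathcal{PML}(S)$. Here I would use the explicit formula \eqref{eq:f}, $\Phi_\mu(\cdot) = \log\sup_{\gamma\in\mathcal{PML}}\mathcal{L}_\gamma(\mu)/\ell_\gamma(\cdot)$, and evaluate $\Phi_\mu$ along the path $X_t$ provided by Lemma~\ref{lemma:key} applied to $\mu'$ (after replacing $\mu'$ by its enlargement $\hat\mu'$ constructed just above, to guarantee that every arc or boundary curve either lies in $\hat\mu'$ or is transverse to a simple leaf of it). Using the two-sided estimate $e^t i(\mu',\alpha) - C \leq \ell_\alpha(X_t) \leq e^t i(\mu',\alpha) + C_\alpha$ for $\alpha\in\mathcal{C}\cup\mathcal{A}$, one computes the asymptotics of $\Phi_\mu(X_t) - \Phi_{\mu'}(X_t)$ as $t\to\infty$. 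Since rational measured laminations are dense (Lemma~\ref{lem:dense}) and $\mathcal{L}_\gamma$, $i(\cdot,\gamma)$ depend continuously on the lamination, the supremum over $\gamma\in\mathcal{PML}$ in \eqref{eq:f} is controlled by the supremum over $\gamma\in\mathcal{C}\cup\mathcal{A}$; feeding in Lemma~\ref{lem:ratio} (which identifies $\sup_{\gamma\in\mathcal{C}\cup\mathcal{A}} i(\nu,\gamma)/i(\mu,\gamma)$ with the maximum ergodic coefficient $\max\{f_j\}$, or $+\infty$), one extracts from the equality $\Phi_\mu(X_t) = \Phi_{\mu'}(X_t)$ that $\sup_{\gamma} i(\mu,\gamma)/i(\mu',\gamma) \leq 1$ and, by symmetry in $\mu\leftrightarrow\mu'$ (swapping which lamination we stretch along), also the reverse; Lemma~\ref{lem:ratio} then forces $\mu$ and $\mu'$ to be proportional, i.e.\ equal in $\mathcal{PML}(S)$.

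The main obstacle I expect is the third case, specifically making the asymptotic computation along $X_t$ rigorous: one must be careful that $\Phi_\mu(X_t) + t$ (or a similar normalization) converges, and that the limit is exactly $\log\sup_{\gamma\in\mathcal{C}\cup\mathcal{A}} i(\mu,\gamma)/i(\mu',\gamma)$ up to bounded error coming from the constants $C$ and $C_\alpha$ in Lemma~\ref{lemma:key}. The delicate point is that $C_\alpha$ depends on $\alpha$, so uniformity of the upper bound is lost over all of $\mathcal{A}$; this is precisely where the enlargement $\hat\mu'$ earns its keep — for arcs or boundary curves \emph{not} in $\hat\mu'$, transversality to a simple leaf forces $i(\mu',\alpha) > 0$ so $\ell_\alpha(X_t)\to\infty$ at the full rate $e^t$, while the finitely many leaves of $\hat\mu'$ contribute only boundedly. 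Combined with compactness of $\mathcal{PML}$ and the uniform convergence statement already used in the proof of Proposition~\ref{pro:convergent}, this should let the $\alpha$-dependent constants be absorbed, and the computation goes through.
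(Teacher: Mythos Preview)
Your three-case split and your handling of Cases I and II match the paper. In Case III you have assembled the right tools (the path $X_t$ from Lemma~\ref{lemma:key}, the enlargement $\hat\mu'$, and Lemma~\ref{lem:ratio}), but the argument as sketched has a real gap.

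When you stretch toward $\hat\mu'$, the asymptotics $\ell_\alpha(X_t)\sim e^t i(\hat\mu',\alpha)$ involve $\hat\mu'$, not $\mu'$. So the equality $\Phi_\mu(X_t)=\Phi_{\mu'}(X_t)$ does \emph{not} yield $\sup_\gamma i(\mu,\gamma)/i(\mu',\gamma)\le 1$; what actually falls out is a relation between $\sup_\gamma i(\mu,\gamma)/i(\hat\mu',\gamma)$ and $\sup_\gamma i(\mu',\gamma)/i(\hat\mu',\gamma)$. Since $\hat\mu'=\mu'+\zeta$ with $\zeta$ generically nonzero, the latter supremum equals $1$ by Lemma~\ref{lem:ratio}, and the former tells you only that $\mu$ is supported on $\hat\mu'$ with bounded coefficients --- not that $\mu=\mu'$. (Your remark that ``transversality to a simple leaf forces $i(\mu',\alpha)>0$'' illustrates the slip: the simple leaves produced by the enlargement lie in $\hat\mu'\setminus\mu'$, so what is forced is $i(\hat\mu',\alpha)>0$.) Running the symmetric argument gives the analogous statement with $\mu$ and $\mu'$ swapped and $\hat\mu$ in place of $\hat\mu'$, but combining the two still does not pin down $\mu=\mu'$: for instance nothing so far excludes $\mu$ being one of the curves added to $\mu'$ in its enlargement and vice versa.

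The paper closes this gap with an $\epsilon$-perturbation. Rather than stretching toward $\hat\mu$, one stretches toward $\hat\mu_\epsilon=(1-\epsilon)\mu+\tfrac{\epsilon}{L}\zeta$. Then $\sup_\gamma i(\mu,\gamma)/i(\hat\mu_\epsilon,\gamma)=1/(1-\epsilon)$ exactly, while Lemma~\ref{lem:ratio} shows that for $\nu\ne\mu$ (normalized so $\ell_\mu(X_0)=\ell_\nu(X_0)=1$) one can choose $\epsilon>0$ and $\gamma_0\in\mathcal{C}\cup\mathcal{A}$ with $i(\nu,\gamma_0)/i(\hat\mu_\epsilon,\gamma_0)>(1+\delta)/(1-\epsilon)$. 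This strict gap survives the passage to lengths along $X_t$: on the $\mu$ side one uses only the \emph{uniform} lower bound $e^t i(\hat\mu_\epsilon,\alpha)-C\le \ell_\alpha(X_t)$ from Lemma~\ref{lemma:key}, and on the $\nu$ side one uses the upper bound for the \emph{single} curve $\gamma_0$, so the $\alpha$-dependence of $C_\alpha$ never enters. This gives Lemma~\ref{lem:good} and hence $\Phi_\mu\ne\Phi_\nu$, with no appeal to a symmetric second computation.
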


\begin{proof} We separate the proof into three steps.

\bigskip

\textbf{I}. The restriction $\Phi|_{\mathcal{T}(S)}$ is injective, since for any $X\in \mathcal{T}(S)$,
we have
$$\inf_{Y\in \mathcal{T}(S)}\Phi_X(Y)=-d(X_0,X)$$
and the infimum is exactly obtained at $X$.

\bigskip

\textbf{II}. To show that for any $Y\in \mathcal{T}(S)$ and $\mu\in \mathcal{PML}$,
$\Phi_Y\neq \Phi_\mu$, we observe that $$\inf_{X\in \mathcal{T}(S)}\Phi_{\mu}(X)=-\infty.$$

In fact,  there is a family of hyperbolic structures $X_t$ on $S$ such that $\Phi_\mu (X_t)\to -\infty$. 
As we did in Section \ref{sec:inequality}, we define by
$$\Gamma: t\in \mathbb{R}_+\to \mathcal{T}(S^d)$$
the stretch line converging to the double of $\hat\mu$. We set $X_t=\Gamma^U(t)$.

By Lemma \ref{lemma:key}, there is a constant $C>0$ such that for each $\alpha\in \mathcal{C}\cup \mathcal{A}$,
$$e^ti(\hat\mu,\alpha)\leq \ell_\alpha(X_t)+C.$$
When $\alpha\in \mathcal{C}$, we can take $C$ to be zero.
Let $N>0$ be a sufficiently large constant such that $C/N<1$. Since $\ell_{\hat\mu}(X_t)\to 0$ as $t\to \infty$, by the Collar Lemma,
the length of any geodesic arc intersecting some simple leaf of $\hat\mu$ is be uniformly large
as soon as $t$ is sufficiently large.  By the construction of $\hat\mu$, we have
$$\ell_\alpha(X_t)\geq N, \ \forall \ \alpha\in \mathcal{A}\cup \mathcal{B},i(\hat\mu,\alpha)>0, t\geq T(N).$$

Denote by 
$$C_0= \log\sup_{\eta\in \mathcal{PML}} \frac{i(\mu,\eta)}{\ell_\eta(X_0)}.$$
It follows from the definition of $\Phi_\mu$ that

\begin{eqnarray*}
\Phi_\mu(X_t)&=&\log\sup_{\eta\in \mathcal{PML}} \frac{i(\mu,\eta)}{\ell_\eta(X_t)}- C_0\\
&\leq & \log\sup_{\alpha\in \mathcal{A}\cup \mathcal{B}} \frac{i(\hat\mu,\alpha)}{\ell_\alpha(X_t)} -C_0 \\
&=& \log\sup_{\alpha\in \mathcal{A}\cup \mathcal{B}} \frac{e^t i(\hat\mu,\alpha)}{e^t\ell_\alpha(X_t)} -C_0 \\
&\leq&   \log\sup_{\alpha\in \mathcal{A}\cup \mathcal{B},i(\hat\mu,\alpha)>0} \frac{\ell_\alpha(X_t)+C}{e^t\ell_\alpha(X_t)} -C_0  \\
&\leq& -t + C/N - C_0, \ t> T(N). 
\end{eqnarray*}
As a result, $\Phi_\mu(X_t)\to -\infty$. 

\bigskip

\textbf{III}. It remains to show that for any $\mu\neq \nu$, $\Phi_\mu\neq \Phi_\nu$.
Recall that we made the identification
$$\mathcal{PML}\cong \{\eta\in \mathcal{ML} \ | \ \ell_\eta(X_0)=1\}.$$
Without loss of generality, we assume that
$$ \log\sup_{\eta\in \mathcal{PML}} \frac{i(\mu,\eta)}{\ell_\eta(X_0)}\geq  \log\sup_{\eta\in \mathcal{PML}} \frac{i(\nu,\eta)}{\ell_\eta(X_0)}.$$
Then for any $X\in \mathcal{T}(S)$, we have

\begin{equation}\label{eq:6}
\Phi_\nu(X)-\Phi_\mu(X)\geq \log\sup_{\eta\in \mathcal{PML}} \frac{i(\nu,\eta)}{\ell_\eta(X)}-\log\sup_{\eta\in \mathcal{PML}} \frac{i(\mu,\eta)}{\ell_\eta(X)}.
\end{equation}
We conclude the proof by showing the following lemma, which
 implies that $\Phi_\mu \neq \Phi_\nu$.
\end{proof}

\begin{lemma}\label{lem:good}
There exists a point $Y$ in $\mathcal{T}(S)$ such that
$$\log\sup_{\eta\in \mathcal{PML}} \frac{i(\nu,\eta)}{\ell_\eta(Y)}> \log\sup_{\eta\in \mathcal{PML}} \frac{i(\mu,\eta)}{\ell_\eta(Y)}.$$
\end{lemma}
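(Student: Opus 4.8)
The plan is to locate the point $Y$ far out along the key path of Lemma~\ref{lemma:key}, built from a suitable enlargement $\hat\mu$ of $\mu$, and to show that on this path the $\nu$-side eventually dominates. First I would reduce both sides to suprema over $\mathcal{C}\cup\mathcal{A}$: for $\sigma\in\{\mu,\nu\}$ and any $Y\in\mathcal{T}(S)$ one has $\sup_{\eta\in\mathcal{PML}}\frac{i(\sigma,\eta)}{\ell_\eta(Y)}=\sup_{\gamma\in\mathcal{C}\cup\mathcal{A}}\frac{i(\sigma,\gamma)}{\ell_\gamma(Y)}$, by the argument of Proposition~\ref{pro:maximum} (the supremum over the compact $\mathcal{PML}$ is attained; an extremal lamination has every ergodic component extremal; a minimal component is approximated by weighted simple closed curves in its supporting subsurface, while curve and arc components already lie in $\mathcal{C}\cup\mathcal{A}$). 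Next, writing $\mu=\sum_j\mu_j$ for the ergodic decomposition and using the normalization $\ell_\mu(X_0)=\ell_\nu(X_0)=1$, Lemma~\ref{lem:ratio} gives $\sup_{\gamma\in\mathcal{C}\cup\mathcal{A}}\frac{i(\nu,\gamma)}{i(\mu,\gamma)}>1$: if $\nu=\sum_j f_j\mu_j$ then $\sum_j (f_j-1)\ell_{\mu_j}(X_0)=0$ with all $\ell_{\mu_j}(X_0)>0$, so $\nu\neq\mu$ forces $\max_j f_j>1$; otherwise the supremum is $+\infty$. Fix $\gamma_1\in\mathcal{C}\cup\mathcal{A}$ with $i(\nu,\gamma_1)>i(\mu,\gamma_1)\geq 0$.

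Now choose any enlargement $\hat\mu^{(0)}$ of $\mu$ as in the construction preceding Proposition~\ref{pro:injective}. Its extra leaves form a measured lamination $E_0$ whose support is disjoint from $\mathrm{supp}(\mu)$, and every $\alpha\in\mathcal{A}\cup\mathcal{B}$ is either a leaf of $\hat\mu^{(0)}$ or crosses a simple leaf of it; this last property concerns supports only, so it is unaffected by rescaling transverse measures. I would therefore set $\hat\mu=\mu+\varepsilon E_0$ with $\varepsilon>0$ so small that $i(\nu,\gamma_1)>i(\mu,\gamma_1)+\varepsilon\,i(E_0,\gamma_1)=i(\hat\mu,\gamma_1)$, and let $w_{\min}>0$ be the least transverse weight among the (finitely many) simple leaves of $\hat\mu$ (there is at least one, since an arc is never a leaf of a filling lamination). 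Applying Lemma~\ref{lemma:key} with $\hat\mu$ in place of $\mu$, let $\Gamma(t)$ be the stretch line in $\mathcal{T}(S^d)$ directed by a complete geodesic lamination $\lambda$ with no closed leaves, totally transverse to $\hat\mu^d$, converging to $[\hat\mu^d]$, and put $Y_t=\Gamma_U(t)=\Gamma(t)|_S\in\mathcal{T}(S)$, so that $e^t i(\hat\mu,\alpha)-C\le \ell_\alpha(Y_t)\le e^t i(\hat\mu,\alpha)+C_\alpha$ for $\alpha\in\mathcal{C}\cup\mathcal{A}$, with $C$ uniform and $C=0$ when $\alpha\in\mathcal{C}$.

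The core estimate is $\limsup_{t\to\infty} e^t\,\sup_{\alpha\in\mathcal{C}\cup\mathcal{A}}\frac{i(\mu,\alpha)}{\ell_\alpha(Y_t)}\le 1$. I would prove it term by term: the term vanishes when $i(\mu,\alpha)=0$; when $i(\mu,\alpha)>0$ and $\alpha\in\mathcal{C}$, the inequality $\ell_\alpha(Y_t)\ge e^t i(\hat\mu,\alpha)$ (the $C=0$ case) together with $i(\mu,\alpha)\le i(\hat\mu,\alpha)$ gives $\frac{i(\mu,\alpha)}{\ell_\alpha(Y_t)}\le e^{-t}$; and when $i(\mu,\alpha)>0$ and $\alpha\in\mathcal{A}$, then $\alpha$ crosses $\mu\subseteq\hat\mu$ transversally so it is not a leaf of $\hat\mu$, hence it crosses a simple leaf and $i(\hat\mu,\alpha)\ge w_{\min}$, whence for large $t$ one has $\ell_\alpha(Y_t)\ge e^t i(\hat\mu,\alpha)\bigl(1-C/(e^tw_{\min})\bigr)$ and $\frac{i(\mu,\alpha)}{\ell_\alpha(Y_t)}\le e^{-t}/\bigl(1-C/(e^tw_{\min})\bigr)$, uniformly in $\alpha$. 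With the reduction of the first paragraph this yields $\sup_{\eta\in\mathcal{PML}}\frac{i(\mu,\eta)}{\ell_\eta(Y_t)}\le e^{-t}/\bigl(1-C/(e^tw_{\min})\bigr)$. On the other hand $\sup_{\eta\in\mathcal{PML}}\frac{i(\nu,\eta)}{\ell_\eta(Y_t)}\ge \frac{i(\nu,\gamma_1)}{\ell_{\gamma_1}(Y_t)}\ge \frac{i(\nu,\gamma_1)}{e^t i(\hat\mu,\gamma_1)+C_{\gamma_1}}$. Multiplying both estimates by $e^t$ and letting $t\to\infty$, the ratio of the two suprema is at least $\frac{i(\nu,\gamma_1)}{i(\hat\mu,\gamma_1)}>1$ (and is $+\infty$ when $i(\hat\mu,\gamma_1)=0$, since then $\ell_{\gamma_1}(Y_t)\le C_{\gamma_1}$ stays bounded). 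Thus for $t$ large $\sup_{\eta}\frac{i(\nu,\eta)}{\ell_\eta(Y_t)}>\sup_{\eta}\frac{i(\mu,\eta)}{\ell_\eta(Y_t)}$, and taking logarithms shows $Y=Y_t$ has the required property.

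The main obstacle is the uniform upper bound $e^t\sup_\alpha\frac{i(\mu,\alpha)}{\ell_\alpha(Y_t)}\to 1$: this is precisely where both refinements in Lemma~\ref{lemma:key} are essential — the vanishing of the additive constant on closed curves, and, for arcs, the fact that enlarging $\mu$ to $\hat\mu$ forces $i(\hat\mu,\alpha)\ge w_{\min}$ for every arc meeting $\mu$, so that the arc-dependent constants $C_\alpha$ never intervene and the bound is genuinely $e^{-t}(1+o(1))$ rather than the useless $O(e^{-t/2})$ one gets from a crude argument. By comparison, the reduction of the suprema over $\mathcal{PML}$ to suprema over $\mathcal{C}\cup\mathcal{A}$, the extraction of $\gamma_1$ from Lemma~\ref{lem:ratio}, and the choice of $\varepsilon$ are routine once the core estimate is in place.
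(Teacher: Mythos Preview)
Your argument is correct and follows essentially the same route as the paper: enlarge $\mu$ to $\hat\mu$ with small extra weight, run along the path $Y_t$ of Lemma~\ref{lemma:key} toward $\hat\mu^d$, and compare the two suprema asymptotically using the key inequality together with a curve $\gamma_1$ extracted from Lemma~\ref{lem:ratio}. The only notable difference is that for the uniform arc bound you use $i(\hat\mu,\alpha)\ge w_{\min}$ directly (so that $\ell_\alpha(Y_t)\ge e^t w_{\min}-C$), whereas the paper obtains $\ell_\alpha(X_t)\ge N$ via Th\'eret's Lemma~\ref{lem:theret} and the Collar Lemma; your variant is slightly cleaner since it avoids these auxiliary results.
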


\begin{proof}[Proof of Lemma \ref{lem:good}]
Let $\hat\mu$ be the refinement of $\mu$ constructed above.  We can write $\hat\mu$ as

$$\hat\mu= \mu+ \zeta.$$
Suppose that $\ell_\zeta(X_0)=L$. Then we set
$$\hat\mu_\epsilon= (1-\epsilon)\mu + \frac{\epsilon}{L}\zeta.$$
It follows that $\hat\mu_\epsilon\in \mathcal{PML}$ for each $0\leq \epsilon\leq 1$.
We first claim that we can find $0<\epsilon<1$ and some $\gamma_0 \in \mathcal{C}\cup \mathcal{A}$ such that

\begin{equation*}
\frac{i(\nu,\gamma_0)}{i(\hat\mu_\epsilon,\gamma_0)}>\frac{1}{1-\epsilon}.
\end{equation*}

We now prove this claim. As above, we assume that  $$\hat\mu_\epsilon=(1-\epsilon)\sum_j \mu_j + \epsilon\sum_k \zeta_k.$$

There are two cases.
If  $\nu$ cannot be expressed as $\sum_j f_j\mu_j+ \sum_k g_k\zeta_k, f_j$ with $g_k\geq 0$,
then (by Lemma \ref{lem:ratio})
$$\sup_{\gamma\in \mathcal{C}\cup \mathcal{A}} \frac{i(\nu,\gamma)}{i(\hat\mu_\epsilon,\gamma)}=\infty.$$
In this case, for any given $0<\epsilon<1$, there is some $\gamma_0 \in \mathcal{C}\cup \mathcal{A}$ such that
$$\frac{i(\nu,\gamma_0)}{i(\hat\mu_\epsilon,\gamma_0)}>\frac{1}{1-\epsilon}.$$

Otherwise,  $$\nu=\sum_j f_j\mu_j+ \sum_k g_k\zeta_k, f_j \ \hbox { for some } \ g_k\geq 0.$$
If there is some $g_k>0$, then we choose $0<\epsilon<1$ sufficiently small such that
$g_j/\epsilon >\frac{1}{1-\epsilon}$. It follows from Lemma \ref{lem:ratio} that
 there is some $\gamma_0 \in \mathcal{C}\cup \mathcal{A}$ such that
$$\frac{i(\nu,\gamma_0)}{i(\hat\mu_\epsilon,\gamma_0)}>\frac{1}{1-\epsilon}.$$

In the case where $\nu=\sum_j f_j\mu_j$, since we assumed that $\ell_\mu(X_0)=\ell_\nu(X_0)=1$, we have
$$\sum_j f_j \ell_{\mu_j}(X_0)= \sum_j \ell_{\mu_j}(X_0)=1.$$
There is some $f_j>1 $. It follows again from Lemma \ref{lem:ratio} that
 there is some $\gamma_0 \in \mathcal{C}\cup \mathcal{A}$ such that
$$\frac{i(\nu,\gamma_0)}{i(\hat\mu_\epsilon,\gamma_0)}\geq \frac{f_j}{1-\epsilon}> \frac{1}{1-\epsilon}.$$

Fix $0< \epsilon< 1$ as above. We assume that

\begin{equation}\label{eq:w}
\frac{i(\nu,\gamma_0)}{i(\hat\mu_\epsilon,\gamma_0)}>\frac{1+\delta}{1-\epsilon}.
\end{equation}
for some sufficiently small constant $\delta>0$.

Like in our proof in Step II,  we denote by
$$\Gamma_\epsilon: t\in \mathbb{R}_+\to \mathcal{T}(S^d)$$
the stretch line converging to the double of $\hat\mu_\epsilon$. We set $X_t=\Gamma_\epsilon^U(t)$.
By the inequality between intersection number and hyperbolic length, we have

\begin{eqnarray*}\sup_{\eta\in \mathcal{PML}}\frac{i(\hat\mu_\epsilon,\eta)}{e^{-t}\ell_\eta(X_t)}&=&\sup_{\alpha\in \mathcal{C}\cup \mathcal{A}}\frac{i(\hat\mu_\epsilon,\eta)}{e^{-t}\ell_\alpha(X_t)} \\
&\leq& \max\{1, \sup_{\alpha\in \mathcal{A},i(\hat\mu_\epsilon,\alpha)>0}\frac{i(\hat\mu_\epsilon,\eta)}{e^{-t}\ell_\alpha(X_t)}\}
\end{eqnarray*}

Let $N>0$ be a sufficiently large constant such that $C/N< \frac{\delta}{3}$. Then 
(by using the Collar Lemma and  the construction of $\hat\mu_\epsilon$ again) we have
$$\ell_\alpha(X_t)\geq N, \ \forall \ \alpha\in \mathcal{A},i(\hat\mu_\epsilon,\alpha)>0, t\geq T(N).$$
It follows that

\begin{eqnarray*}
\sup_{\eta\in \mathcal{PML}}\frac{i(\mu,\eta)}{e^{-t}\ell_\eta(X_t)}&\leq&
\frac{1}{1-\epsilon} \sup_{\eta\in \mathcal{PML}}\frac{i(\hat\mu_\epsilon,\eta)}{e^{-t}\ell_\eta(X_t)} \\
&\leq& \frac{1}{1-\epsilon}\max\{1, \sup_{\alpha\in \mathcal{A},i(\hat\mu_\epsilon,\alpha)>0}\frac{i(\hat\mu_\epsilon,\eta)}{e^{-t}\ell_\alpha(X_t)}\} \\
&\leq& \frac{1}{1-\epsilon}\max\{1, \sup_{\alpha\in \mathcal{A},i(\hat\mu_\epsilon,\alpha)>0}\frac{e^{-t}\ell_\alpha(X_t)+e^{-t}C}{e^{-t}\ell_\alpha(X_t)}\}\\
&\leq& \frac{1+ \frac{C}{N}}{1-\epsilon}\\
&\leq& \frac{1+\delta/3}{1-\epsilon}.
\end{eqnarray*}

As a result, for $t\geq T(N)$ we have

\begin{eqnarray*}
 && \log\sup_{\eta\in \mathcal{PML}} \frac{i(\nu,\eta)}{\ell_\eta(X_t)}-\log\sup_{\eta\in \mathcal{PML}} \frac{i(\mu,\eta)}{\ell_\eta(X_t)} \\
&=& \log\sup_{\eta\in \mathcal{PML}} \frac{i(\nu,\eta)}{e^{-t}\ell_\eta(X_t)}-\log\sup_{\eta\in \mathcal{PML}} \frac{i(\mu,\eta)}{e^{-t}\ell_\eta(X_t)}\\
&\geq& \log \frac{i(\nu,\gamma_0)}{i(\hat\mu_\epsilon, \gamma_0)+e^{-t} C_{\gamma_0}}- \log \frac{1+\frac{\delta}{3}}{1-\epsilon}.
\end{eqnarray*}

By \eqref{eq:w}, when $t$ is sufficiently large, we have
$$\log \frac{i(\nu,\gamma_0)}{i(\hat\mu_\epsilon, \gamma_0)+e^{-t} C_{\gamma_0}}>\log \frac{1+\frac{2\delta}{3}}{1-\epsilon}.$$

This proves Lemma \ref{lem:good}.
\end{proof}

The following result is a consequence of the proof of Proposition \ref{pro:injective}.

\begin{lemma}\label{lem:escape}
Let $(X_n)$ be a sequence in $\mathcal{T}(S)$, $Y\in \mathcal{T}(S)$ such that
$\Phi_{X_n}(\cdot)\to \Phi_Y(\cdot)$. Then
$X_n\to Y$ in $\mathcal{T}(S)$. In particular,
$(X_n)$ cannot escape to infinity in $\mathcal{T}(S)$.
\end{lemma}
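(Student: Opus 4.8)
The statement is essentially a formal consequence of Proposition \ref{pro:injective} together with two facts already isolated in its proof: for $Y\in\mathcal{T}(S)$ the function $\Phi_Y$ is bounded below on $\mathcal{T}(S)$, with $\inf_{X\in\mathcal{T}(S)}\Phi_Y(X)=-d(X_0,Y)$ attained at $X=Y$ (Step I), while for any $\mu\in\mathcal{PML}$ one has $\inf_{X\in\mathcal{T}(S)}\Phi_\mu(X)=-\infty$ (Step II, via Lemma \ref{lemma:key} applied to the refinement $\hat\mu$). The plan is: first rule out escape to infinity using these two facts, then identify the limit using injectivity on $\mathcal{T}(S)$ and the continuity of $\Phi$.

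\textbf{Step 1: $(X_n)$ stays in a compact subset of $\mathcal{T}(S)$.} Suppose not, i.e. $(X_n)$ leaves every compact subset of $\mathcal{T}(S)$, so it has no accumulation point in $\mathcal{T}(S)$. Since $\overline{\mathcal{T}(S)}=\mathcal{T}(S)\cup\mathcal{PML}(S)$ is compact, after passing to a subsequence we may assume $X_n\to\mu$ for some $\mu\in\mathcal{PML}(S)$. The map $\Phi\colon\overline{\mathcal{T}(S)}\to C(\mathcal{T}(S))$ is continuous (Corollary \ref{coro:convergent} together with the compactness of $\mathcal{PML}$), hence $\Phi_{X_n}\to\Phi_\mu$; combined with the hypothesis $\Phi_{X_n}\to\Phi_Y$ this gives $\Phi_\mu=\Phi_Y$. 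But then $-\infty=\inf_{X\in\mathcal{T}(S)}\Phi_\mu(X)=\inf_{X\in\mathcal{T}(S)}\Phi_Y(X)=-d(X_0,Y)$, which is absurd since $d(X_0,Y)$ is finite. Therefore $(X_n)$ is contained in some compact set $K\subset\mathcal{T}(S)$; in particular it does not escape to infinity.

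\textbf{Step 2: the limit is $Y$.} Let $X'$ be any accumulation point of $(X_n)$ in $K\subset\mathcal{T}(S)$, say $X_{n_k}\to X'$ in $\mathcal{T}(S)$. By continuity of $\Phi$ we get $\Phi_{X_{n_k}}\to\Phi_{X'}$, and by hypothesis also $\Phi_{X_{n_k}}\to\Phi_Y$, so $\Phi_{X'}=\Phi_Y$. Since $\Phi$ is injective on $\mathcal{T}(S)$ (Step I of the proof of Proposition \ref{pro:injective}, or Proposition \ref{pro:injective} itself), $X'=Y$. Thus every subsequential limit of $(X_n)$ in $K$ equals $Y$, and since $(X_n)$ lies in the compact set $K$, it converges to $Y$ in $\mathcal{T}(S)$.

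\textbf{Remark on the argument.} I do not expect a genuine obstacle here: all the real work has already been carried out in the proof of Proposition \ref{pro:injective}, and what remains is a routine compactness/Hausdorff argument. Equivalently, one may observe that $\Phi$ is a continuous injection from the compact Hausdorff space $\overline{\mathcal{T}(S)}$ into the Hausdorff space $C(\mathcal{T}(S))$, hence a homeomorphism onto its image, so $\Phi^{-1}$ is continuous and $\Phi_{X_n}\to\Phi_Y$ immediately yields $X_n\to Y$ in $\overline{\mathcal{T}(S)}$, and thus in $\mathcal{T}(S)$ since $Y\in\mathcal{T}(S)$ and the subspace topology on $\mathcal{T}(S)$ agrees with its intrinsic topology. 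The only mild point of care is this compatibility of topologies, which is immediate from the construction of Thurston's compactification in Section \ref{sec:boundary}.
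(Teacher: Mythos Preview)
Your proof is correct and follows essentially the same route as the paper's: pass to a subsequential limit $P$ in the compact space $\overline{\mathcal{T}(S)}$, use continuity of $\Phi$ together with the hypothesis to get $\Phi_P=\Phi_Y$, and conclude $P=Y$ by the injectivity established in Proposition~\ref{pro:injective} (the paper simply cites the contradiction $\Phi_\mu=\Phi_Y$, whereas you spell out the infimum comparison from Steps~I--II). One small wording slip: ``$(X_n)$ leaves every compact subset'' does not literally imply it has \emph{no} accumulation point in $\mathcal{T}(S)$, but your argument only needs a subsequence accumulating on $\mathcal{PML}$, and such a subsequence is easily extracted, so the proof stands.
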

\begin{proof}
The lemma is a standard result in a locally compact geodesic metric space,
see Ballmann \cite[Chapter II]{Ballman}. For the Thurston metric on the Teichm\"uller
spaces of surfaces without boundary, see Walsh \cite[Proposition 2]{Walsh} for a proof.

In the case of surfaces with boundary, note that, up to a subsequence, $(X_n)$ converges to a point $P\in \overline{\mathcal{T}(S)}$.
If $P=\mu\in \mathcal{PML}$, then by continuity (see the discussion after Corollary \ref{coro:convergent}), $\Phi_{X_n}\to \Phi_\mu$. By assumption, $\Phi_{X_n}\to \Phi_Y$. Thus $\Phi_{\mu}= \Phi_Y$,
contradiction. If $P\in \mathcal{T}(S)$, it is obvious that $P=Y$. 
\end{proof}

\remark{Let  $(X_n)$ be a sequence in $\mathcal{T}(S)$ converging to $\mu\in \mathcal{PML}$.
Then by the continuity of $\Phi$ ,
we have
$$\Phi_{X_n}(\cdot)\to \Phi_\mu(\cdot)$$
uniformly on any compact subset of $\mathcal{T}(S)$. Note that
$$\inf_{X\in \mathcal{T}(S)}\Phi_{X_n}(X)=-d(X_0,X_n)\to -\infty.$$
However, this does not imply 
$$\inf_{X\in \mathcal{T}(S)}\Phi_{\mu}(X)=-\infty$$ directly, because the infimum may not b attained in $\mathcal{T}(S)$.
It would be interesting to study the level sets of the horofunctions.}

\begin{remark}
The proof of Proposition \ref{pro:injective} applies to the Teichm\"uller space of surfaces without boundary.
This is based again on Lemma \ref{lemma:papa}.
Thus we get a new proof for \cite[Theorem 3.6]{Walsh}. However, the argument in \cite{Walsh} does not
work for surfaces with boundary. Note that in contrast with surfaces without boundary,
 the set of uniquely ergodic measured laminations on  a surface $S$ with boundary is not dense in
$\mathcal{ML}(S)$.
\end{remark}

\begin{theorem}\label{thm:hom}
The map $\Phi$ establishes a homeomorphism between Thurston's compactification $\overline{\mathcal{T}(S)}$ and the horofunction compactification.
\end{theorem}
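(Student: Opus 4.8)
The plan is to deduce the theorem from the properties of $\Phi$ already established, using only soft topology. First I would note that, by construction, the restriction of $\Phi$ to $\mathcal{T}(S)\subset \overline{\mathcal{T}(S)}$ is, via formula \eqref{eq:Q}, exactly the horofunction embedding $X\mapsto \Phi_X = d(\cdot,X)-d(X_0,X)$, which is already known to be an embedding of $\mathcal{T}(S)$ into $C(\mathcal{T}(S))$; moreover the subspace topology induced on $\mathcal{T}(S)$ by Thurston's compactification is the usual topology of Teichm\"uller space.

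Next I would combine three facts: $\overline{\mathcal{T}(S)}$ is compact; the extended map $\Phi\colon \overline{\mathcal{T}(S)}\to C(\mathcal{T}(S))$ is continuous (Corollary \ref{coro:convergent} together with the compactness of $\mathcal{PML}(S)$); and $\Phi$ is injective on $\overline{\mathcal{T}(S)}$ (Proposition \ref{pro:injective}). Since $C(\mathcal{T}(S))$, equipped with the topology of locally uniform convergence, is Hausdorff, a continuous injection from a compact space is a homeomorphism onto its image. Hence $\Phi$ is a homeomorphism from $\overline{\mathcal{T}(S)}$ onto $\Phi\big(\overline{\mathcal{T}(S)}\big)$, with the subspace topology.

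It remains to identify $\Phi\big(\overline{\mathcal{T}(S)}\big)$ with the horofunction compactification $\overline{\Phi(\mathcal{T}(S))}$. On the one hand, $\Phi\big(\overline{\mathcal{T}(S)}\big)$ is compact, hence closed in $C(\mathcal{T}(S))$, and it contains $\Phi(\mathcal{T}(S))$, so it contains $\overline{\Phi(\mathcal{T}(S))}$. On the other hand, $\mathcal{T}(S)$ is dense in $\overline{\mathcal{T}(S)}$ by the definition of Thurston's compactification, so continuity of $\Phi$ gives $\Phi\big(\overline{\mathcal{T}(S)}\big)\subseteq \overline{\Phi(\mathcal{T}(S))}$. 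These two inclusions yield the equality, and the homeomorphism restricts to a homeomorphism from the Thurston boundary $\mathcal{PML}(S)$ onto the horofunction boundary: indeed $\mathcal{PML}(S)$ and its image are disjoint from $\mathcal{T}(S)$ and $\Phi(\mathcal{T}(S))$ respectively, by Steps \textbf{I} and \textbf{II} in the proof of Proposition \ref{pro:injective} ($\Phi_X$ attains a finite infimum at $X$, while $\Phi_\mu$ is unbounded below).

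Since all the substance lies in the continuity of $\Phi$ and in Proposition \ref{pro:injective}, the present argument is purely formal; the only step needing a little care is the equality $\Phi\big(\overline{\mathcal{T}(S)}\big)=\overline{\Phi(\mathcal{T}(S))}$, which relies precisely on the compactness of $\overline{\mathcal{T}(S)}$ and the density of $\mathcal{T}(S)$ in it.
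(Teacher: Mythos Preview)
Your proof is correct and follows essentially the same approach as the paper's: use that $\Phi$ is a continuous injection from the compact space $\overline{\mathcal{T}(S)}$ into the Hausdorff space $C(\mathcal{T}(S))$, hence a homeomorphism onto its image, and then identify that image with the closure $\overline{\Phi(\mathcal{T}(S))}$. Your write-up is a bit more explicit about the two inclusions yielding $\Phi\big(\overline{\mathcal{T}(S)}\big)=\overline{\Phi(\mathcal{T}(S))}$, but the argument is the same.
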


\begin{proof}

We showed that $\Phi: \overline{\mathcal{T}(S)}\to C(\mathcal{T}(S)) $ is injective and continuous.
Since $\overline{\mathcal{T}(S)}$ is compact, 
any embedding from a compact space to a Hausdorff space is  a homeomorphism
onto its image. As a result, $\Phi(\overline{\mathcal{T}(S)})$ is a compact subset of $C(\mathcal{T}(S))$.
Since the horofuction compactification is the closure of $\Phi(\mathcal{T}(S))$, it must be equal to
 $\Phi(\overline{\mathcal{T}(S)})$.
\end{proof}

\begin{remark}
As we mentioned in the introduction, one of the remaining questions is to understand the isometry group of the arc metric.
One step to handle this question is to calculate the ``detour cost" distance between any two measured laminations on
Thurston's boundary. We will go into details of this calculation in future work.
\end{remark}

Several questions remain open for surfaces with boundary and we mention the following:

\begin{questions}

(a) Is the arc metric Finsler?  If yes, what is the Finsler norm?

(b) Construct families of geodesic between any two points on Teichm\"uller space, analogous to concatenations of stretch lines in the case without boundary.

(c) What is the relation
between the arc metric and the extremal Lipschitz maps between hyperbolic structures?
\end{questions}

Finally, we note that by recent works of Danciger, Gu\'eritaud and Kassel, the
deformation theory of surfaces with boundary is related to Margulis spacetimes in Lorentz geometry \cite{DGK}.
Extremal Lipschitz maps are generalized to geometrically finite hyperbolic manifolds of dimension $n\geq 2$, see \cite{GK}.

\end{document}